\documentclass[twoside]{article}
\usepackage{amssymb,amsthm,amsmath}
\usepackage{xcolor,graphicx}
\usepackage{subfigure}
\usepackage{float}
\usepackage{url}
\usepackage{verbatim}
\usepackage{tikz}
\usepackage{pgfplots}
\pgfplotsset{compat=newest}
\pgfplotsset{plot coordinates/math parser=false}
\newlength\figureheight
\newlength\figurewidth 


\usepackage[top=2cm,bottom=2cm,left=2cm,right=2cm,headsep=10pt,a4paper]{geometry} 

\usepackage[T1]{fontenc}
\usepackage{lmodern}

\makeatletter

\@addtoreset{equation}{section}
\makeatother


\newtheorem{theorem}{Theorem} 
\newtheorem{lemma}[theorem]{Lemma} 
 
\newtheorem{remark}[theorem]{Remark} 
\newtheorem{corollary}[theorem]{Corollary}
\newtheorem{definition}[theorem]{Definition}

\newcommand{\C}{{\mathbb C}}
\newcommand{\R}{{\mathbb R}} 
\newcommand{\Z}{{\mathbb Z}} 
\newcommand{\N}{{\mathbb N}} 
 
\newcommand{\T}{{\mathbb T}} 

\newcommand{\ds}{\displaystyle}

\usepackage{fancyhdr}
 
\pagestyle{fancy}
\fancyhead{}
\fancyfoot{}
\fancyhf{}
\fancyhead[LE]{\thepage}
\fancyhead[RO]{\thepage}
\fancyhead[LO]{High order methods for nonlinear Schr\"odinger equations}
\fancyhead[RE]{C. BESSE AND G. DUJARDIN AND I. LACROIX-VIOLET}



\title{High order exponential integrators for nonlinear
Schr\"odinger equations with application to rotating
Bose-Einstein condensates}
\author{{\bf C. Besse} \\
Institut de Math\'ematiques de  Toulouse UMR5219,\\
Universit\'e de Toulouse; CNRS, UPS IMT;\\
F-31062 Toulouse Cedex 9, France.\\
	\texttt{Christophe.Besse@math.univ-toulouse.fr} \\\\\
{\bf G. Dujardin} \\
	Inria Lille Nord-Europe, MEPHYSTO Team\\ 
  	59650 Villeneuve d'Ascq, France, and\\
        Laboratoire Paul Painlev\'e, UMR CNRS 8525\\
        \texttt{guillaume.dujardin@inria.fr} \\\\
{\bf I. Lacroix-Violet} \\
	Laboratoire Paul Painlev\'e, Universit\'e Lille Nord de France,\\ 
  	CNRS UMR 8524, INRIA RAPSODI Team,\\
  	Universit\'e Lille 1 Sciences et Technologies,
  	Cit\'e Scientifique,\\ 59655 Villeneuve d'Ascq Cedex, France.\\
	\texttt{Ingrid.Violet@math.univ-lille1.fr}}

\begin{document}
\maketitle

\begin{abstract}
This article deals with the numerical integration in time
of nonlinear Schr\"odinger equations.
The main application is the numerical simulation of rotating Bose-Einstein
condensates.
The authors perform a change of unknown so that the rotation term
disappears and they obtain as a result a nonautonomous nonlinear Schr\"odinger
equation.
They consider exponential integrators such as exponential Runge--Kutta methods
and Lawson methods. They provide an analysis of the order of convergence
and some preservation properties
of these methods in a simplified setting and they supplement their
results with numerical experiments with realistic physical parameters.
Moreover, they compare these methods with the classical split-step methods
applied to the same problem.
\end{abstract}

{\small\noindent 
{\bf AMS Classification.} {\color{black} 35Q41, 65M70, 81Q05, 82D50.}

\bigskip\noindent{\bf Keywords.} Gross-Pitaevskii equation, {\color{black} nonlinear Schr\"odinger equation, exponential Runge-Kutta and Lawson methods, splitting methods, superconvergence}.
} 

\section{Introduction}

This paper deals with the numerical integration of nonautonoumous nonlinear Schr\"odinger (NLS) equations which read
\begin{equation}
  \label{eq:schro}
  \left \{
    \begin{array}{ll}
      i \partial_t \psi=-\frac{1}{2} \Delta \psi + V(t,\mathbf{x}) \psi + \beta |\psi|^{2\kappa}  \psi, & (t,\mathbf{x}) \in \mathbb{R}^+\times \mathbb{R}^d,\\
      \psi(0,\mathbf{x})=\psi_0(\mathbf{x}), & x \in \R^d.
    \end{array}
  \right .
\end{equation}
The operator $\Delta$ denotes the usual Laplace operator on $\R^d$, $d\in \mathbb{N}^*$. The potential function $V$ is smooth and $\kappa\in \N$.
The unknown $\psi$ is a complex-valued wavefunction associated to the given initial datum $\psi_0$. The derivation and the analysis of efficient semi-discrete numerical methods for the time integration of these equations have a long history. Some authors are interested in the finite time accuracy of the schemes \cite{DuSa2000, DeFoPe2981, BeBiDe2002, Lu2008}. With additional hypotheses corresponding to physically relevant situations, equation \eqref{eq:schro} may have several invariants such as energy, momentum, etc, ..., in addition to mass conservation. Several authors show interest in the preservation of the invariants by the numerical schemes \cite{DeFoPe2981, Be2004, WeHe1986, FePeVa1995, SaVe1986}. Beyond finite time integration of Eq. \eqref{eq:schro}, asymptotic regimes have been considered: long time preservation of invariants \cite{CoGa2013, GaLu2012, DuFa2007} and semi-classical regimes \cite{BaJiMa2002, CaMo2011, BeCaMe2013, CaChMeMu2014, ChCrLeMe2014}.

Our goal in this paper is to introduce and analyse two classes of exponential methods for the time integration of \eqref{eq:schro} on a $d$-dimensional space. The first class is that of exponential Runge--Kutta methods \cite{HoOs2005_2, HoOs2005, Du2009}. The second class relies on the Lawson techniques \cite{La1967, Ro2001}. 
We focus on situations were the dynamics of equation \eqref{eq:schro} essentially stays in a bounded domain of $\R^d$. Hence, we replace Eq. \eqref{eq:schro} with the same equation on a large periodic torus {\color{black}$\T_\delta^{d}$ (with characteristic size $\delta>0$). Such a change is usually not uniform with respect to $\delta$ (see subsection \ref{subsec:simplifiedsetting}). However it is physically relevant even if it is only valid for reasonably bounded times
{\it a priori}.} Our main application is the Gross-Pitaevskii equation modeling a rotating Bose-Einstein Condensate (BEC) in $\R^2$ for which the dynamics of the solutions is much spatialy localized. We prove our results for functions in Sobolev spaces $H^\sigma(\T_\delta^d)$ for $\sigma>d/2$, {\color{black} instead of
$H^\sigma(\R^d)$}. The fully discrete corresponding situations are not similar.
Discretizing functions on the whole space requires to deal with carefully
chosen specific boundary conditions.
Working on a torus has the numerical advantage of avoiding boundary conditions.
Moreover the numerical computation of the linear group generated
by $i\Delta$ is much simpler in the case of the torus \cite{HoLu1996}.
{\color{black}An alternative to the reduction from $\R^d$ to $\T_\delta^d$ consists in using pseudospectral methods introduced in \cite{BaLiSh2009} and analyzed in \cite{HoKoTh2014}. In contrast to the approach presented in this paper, pseudospectral methods can not be used for general confining potentials ({\it e.g.} quadratic potentials without symmetry in the rotation plane ($\gamma_x^2\neq\gamma_y^2$ in \eqref{potharmonic}) that result in non-autonomous linear parts after the change of variable introduced below (see \eqref{GPeqnv2}) or quadratic-quartic potentials such as
$V(x)=\|x\|^2+(1+\sin(\|x\|^2))\|x\|^4$).}

Before the introduction of the methods (Sections \ref{expRK} and \ref{Lawsonmethods}) and our numerical results (Section \ref{Comparaisons}), we introduce below our main application and show how it fits equation \eqref{eq:schro}.
 
\subsection{Presentation of the application}

A Bose-Einstein Condensate is the state of matter reached by a dilute gas of bosons cooled to very low temperature. A large fraction of bosons occupies the lowest quantum state so that macroscopic quantum phenomena become apparent. This phenomena was theoretically predicted by Bose in 1924 for photons \cite{Bo1924} and generalized to atoms by Einstein in 1925 \cite{Ei1925}. The first experimental evidence of BEC was obtained in 1995 \cite{AnEnMaWiCo1995, DaMeAnDrDuKuKe1995}.

At low temperature, a rotating planar BEC can be described by the macroscopic complex-valued wave function $\varphi=\varphi(t,\mathbf{x})$ whose evolution is governed by a Gross-Pitaevskii Equation (GPE) with an angular momentum rotation term. After a suitable changes of variables \cite{BMTZ2013}, the dimensionless GPE in $d=2$ dimensions satisfied by $\varphi$ can be written for $\mathbf{x} \in \R^{2}$:
\begin{equation}
\label{GPEeq}
i\partial_t \varphi=-\frac{1}{2}\Delta \varphi + V_{c}(\mathbf{x}) \varphi+ \beta |\varphi|^2\varphi-\Omega R\varphi.
\end{equation} 
The real-valued function $V_{c}=V_{c}(\mathbf{x})$ corresponds to a
{\color{black} smooth} 
potential depending only on the space variables denoted by
$\mathbf{x}=(x,y)^{t}$.
In our physical context, this potential is confining: this means
that $V_{c}(\mathbf{x})$ tends to $+ \infty$ when $\|\mathbf x\|=\sqrt{x^2+y^2}$
tends to $+\infty$. For example, in this paper, we consider potentials
of the form
\begin{equation}
\label{potharmonic}
V_{c}(\mathbf{x})=\frac{1}{2}\left(\gamma_{x}^{2}x^{2}+\gamma_{y}^{2}y^{2}\right),
\end{equation}
where $\gamma_x,\gamma_y>0$.
This confining potential competes with the rotation operator
$-\Omega R=i\Omega(x\partial_y-y\partial_x)$ at angular speed $\Omega\in\R$:
the former tends to 
make bosons stay together at the origin of the plane, while the latter
tends to spread the bosons out.
The real coefficient $\beta$ represents the nonlinearity strength,
and comes from the averaged effect of the bosons.
The evolution equation \eqref{GPEeq} is supplemented with an initial condition
\begin{equation}
\label{GPEeqCI}
\varphi(0,\mathbf{x})=\varphi_{0}(\mathbf{x}), \quad \hbox{ for all }~\mathbf{x} \in \R^{2}.
\end{equation}
If one introduces the functional
\begin{equation}
\label{energie}
E(\varphi)=\int_{\R^{2}} \left[\frac{1}{2}|\nabla \varphi|^{2}+V_{c}|\varphi|^{2}+\frac{\beta}{2}|\varphi|^{4}-\Omega Re(\varphi^{\star}R\varphi)\right]d\mathbf{x},
\end{equation}
then \eqref{GPEeq} reads
\begin{equation*}
  i\partial_t\varphi(t,.) = \nabla_{\varphi(t,.)} E(\varphi(t,.)).
\end{equation*}
The equation happens to be Hamiltonian, and the energy is preserved
by the dynamics: along a solution $t\mapsto \varphi(t,.)$
of \eqref{GPEeq}-\eqref{GPEeqCI}, one has $E(\varphi(t,.))=E(\varphi_0)$.
In addition to preserving the energy, the evolution preserves the
mass of the wave function: along a solution $t\mapsto \varphi(t,.)$
of \eqref{GPEeq}-\eqref{GPEeqCI}, one has $\int_{\R^{2}} |\varphi(t,\mathbf{x})|^2d\mathbf{x} = \int_{\R^{2}} |\varphi_0(\mathbf{x})|^2d\mathbf{x}$.
Another dynamical feature of this equation is the evolution of the
angular momentum expectation: if one denotes by
\begin{equation}
\label{angular}
<R>(t)=\int_{\R^{2}} \varphi^{\star}(t,\mathbf{x})R\varphi(t,\mathbf{x})d\mathbf{x},
\end{equation}
then this real-valued function is constant in the special case of
a radial harmonic potential ($\gamma_x=\gamma_y$ in \eqref{potharmonic})
and has a more complex dynamics in more general
cases (see Lemma 6.2.1 in \cite{TPHD2013}).

In the last decades this model has been studied a lot \cite{BZ2005, BDZ2006,AnDu2015,AnBaBe2013, TPHD2013}. 
An important issue in the numerical time integration of equations
\eqref{GPEeq}-\eqref{GPEeqCI} comes from the rotation term $R$. Following \cite{BMTZ2013}, we introduce new coordinates that allow {\color{black}us} to put equation \eqref{GPEeq} in the form of equation \eqref{eq:schro}. 
Let us set for $t\in\R$,
\begin{equation}
\label{matnewvariables}
A(t)=\begin{pmatrix}
\cos(\Omega t) & -\sin(\Omega t) \\
\sin(\Omega t) & \cos(\Omega t)
\end{pmatrix}.
\end{equation}
Note that $A(t)$ is orthogonal and hence satisfies $A(t)^{-1}=A(t)^t$.
We perform the change of unknown $\varphi\leftrightarrow\psi$ defined by
\begin{equation}
\label{newvariables}
\varphi(t,\mathbf{x})=\psi \left(t,A(t)\mathbf{x}\right).
\end{equation}
This way, $\varphi$ solves \eqref{GPEeq} if and only if $\psi$
solves
\begin{equation}
\label{GPeqnv2}
\partial_t \psi=\frac{i}{2}\Delta \psi -i V(t,\tilde{\mathbf{x}}) \psi-i \beta |\psi|^2\psi,
\end{equation}  
where $V$ is a time dependent potential given by
\begin{equation*}
V(t,\tilde{\mathbf{x}})=V_{c}\left(A(t)\tilde{\mathbf{x}}\right),
\end{equation*}
and the initial datum is
\begin{equation*}
  \psi(0,\tilde{\bf x})=\psi_0(\tilde {\bf x})=\varphi_0(\tilde{\bf x}).
\end{equation*}
For convenience, we shall denote explicitly
the following time-dependent change of spatial
variables:
\begin{equation*}
\tilde{\mathbf{x}}=
\begin{pmatrix}
\tilde x \\ \tilde y
\end{pmatrix}= A(t) 
\begin{pmatrix}
x \\ y
\end{pmatrix}=A(t)\mathbf{x}.
\end{equation*}

Note that equation \eqref{GPeqnv2} satisfied by $\psi$ is a standard nonlinear Schr\"odinger equation \eqref{eq:schro} with a cubic nonlinearity ($\kappa=1$) and a space and time dependent potential.

\subsection{General setting of the Cauchy problem}
\label{subsec:simplifiedsetting}

The unboundedness of the potential function $V$ makes the numerical analysis of the exponential methods difficult. 
Therefore, we modify the equation \eqref{eq:schro} by cutting off the potential $V$ smoothly.
Let us motivate this modification.
In the context of Schr\"odinger equations with confining potentials,
if the mass of the initial datum is essentially concentrated in a bounded set around the
origin, then
this mass localisation property
is to be preserved by the evolution of \eqref{eq:schro}, at least
for reasonable times. Therefore, modifying the potential $V$
out of a sufficiently large bounded set around the origin will not create
huge errors in the solution, at least for not too long times.
Let us introduce a smooth function $\chi : \R\rightarrow [0,1]$ such that
\begin{equation*}
  \forall x\in [1-\delta/2, \delta/2-1],\quad \chi(x)=1
  \quad \text{ and } \quad
  \forall x\in (-\infty,-\delta/2)\cup(\delta/2,+\infty), \quad \chi(x)=0,
\end{equation*}
where $\delta\gg 2$ is a given real number, chosen accordingly
with the initial datum $\varphi_0$, the other
physical parameters and the
final computational time $T>0$.
We define the new potential function $W$ for $t\in[0,T]$,
$\mathbf{x}=(x_1,\cdots,x_d)^t$ by letting
\begin{equation}
\label{defW}
  W(t,\mathbf{x})
= V(t,\mathbf{x}) \prod_{j=1}^d \chi(x_j).
\end{equation}
Although modifying the potential function as above changes deeply the
physical situation, the fact that $\delta$ is taken accordingly
to $\varphi_0$ and the physical parameters and for a finite time interval
$[0,T]$ gives some hope in the fact that the evolution
of equation \eqref{eq:schro} with the potential function $V$ and
that of the same equation with $V$ replaced with $W$ starting
from the same initial datum $\varphi_0$ will be quite similar.

To be more specific with the periodization of the problem, let us denote by $\T_\delta$ the quotient $\R/(\delta \Z)$.
We consider the function $w$ defined as $\delta$-periodic in all
directions such that for $t\in [0,T]$ and all $\mathbf{x} \in \R^d$
satisfying $|x_j|\leq \delta/2$, $1\leq j \leq d$,
one has $w(t,\mathbf{x})=W(t,\mathbf{x})$.
The mapping $t\mapsto w(t,.)$ is smooth from $[0,T]$
to $H^\sigma(\T_\delta^d)$ as soon as $\sigma \geq 0$.
In the following sections, we replace the continuous
problem \eqref{eq:schro} with its periodic counterpart as explained above,
and we assume that $\varphi_{0}=\psi_0\in H^{\sigma}(\T_\delta^d)$ for some $\sigma>d/2$.

We therefore get
the following semilinear
Cauchy problem in time:
\begin{equation}
\label{Cauchy1}
\begin{array}{ll}
\partial_{t} \psi(t,{\mathbf{x}}) -L \psi(t,{\mathbf{x}})
=N_{w}(t,\psi)({\mathbf x}),
\quad (t,{\mathbf{x}}) \in [0,T]\times\T_\delta^d, \\[2mm]
\psi(0,{\mathbf{x}})=\psi_{0}({\mathbf{x}}),
\quad {\mathbf{x}} \in \T_\delta^d,
\end{array}
\end{equation}
where $T>0, ~L=\ds\frac{i\Delta}{2}$ and
\begin{equation}
\label{eq:defNper}
N_{w}(t,\psi)({\mathbf x})=
-iw(t,{\mathbf{x}})\psi(t,{\mathbf x})
-i\beta |\psi|^{2\kappa}\psi(t,{\mathbf x}).
\end{equation}

\begin{remark}\label{rmkchoicecauchypb}
The choice of the definition of the linear part $L$ and the nonlinear part
$N$ from equation \eqref{eq:schro} is somewhat arbitrary.
One could also choose, for example
\begin{equation*}
  L=\ds\frac{i\Delta}{2} -iw(t,{\mathbf{x}})
  \qquad {\rm and}
  \qquad
   N(t,\psi)({\mathbf x})=-i \beta |\psi|^{2\kappa}\psi(t,{\mathbf x}),
\end{equation*}
but this would lead to a nonautonomous linear problem whose spectral properties
are not as nice as that of the other case.
{\color{black} However,
in the case of a radially symmetric potential, one may get an autonomous
linear part and use appropriate spectral methods on the whole space $\R^d$
(see Eq. (2d) in \cite{HoKoTh2014}).}
{\color{black}
In this paper, we consider fairly general potentials without symmetry
(see subsection \ref{Comp2d} for numerical experiments with $\gamma_x\neq\gamma_y$
in \eqref{potharmonic})
and}
we will indeed use the nice spectral properties of the operator $L=i\Delta/2$
later.
\end{remark}

\noindent
One can check that the operators of the form $e^{\alpha L}$ for $\alpha\in\R$
are isometric over $H^\sigma(\T_\delta^d)$.
The function $t\mapsto w(t,\cdot)$ belongs to
${\mathcal C}^{\infty}(\R,H^\sigma(\T_\delta^d))$.
Moreover, the nonlinear function $N_w$
satisfies a local Lipschitz condition:
\begin{lemma}
\label{lemmaLipschitz}
  For all $T>0$, for all $r>0$, there exists a constant $C>0$ such that
for all $t\in [0,T]$, $\varphi_1,\varphi_2\in H^\sigma(\T_\delta^d)$ such that
$\|\varphi_1\|_{H^\sigma(\T_\delta^d)}\leq r$ and
$\|\varphi_2\|_{H^\sigma(\T_\delta^d)}\leq r$,
we have $\|N_w(t,\varphi_1)-N_w(t,\varphi_2)\|_{H^\sigma(\T_\delta^d)} \leq
    C \|\varphi_1-\varphi_2\|_{H^\sigma(\T_\delta^d)}$.
\end{lemma}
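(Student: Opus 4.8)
\emph{Proof sketch.} The plan is to rely on the fact that, for $\sigma>d/2$, the Sobolev space $H^\sigma(\T_\delta^d)$ is a Banach algebra: there exists a constant $C_\sigma>0$ (depending only on $\sigma$, $d$ and $\delta$) such that $\|fg\|_{H^\sigma(\T_\delta^d)}\leq C_\sigma\|f\|_{H^\sigma(\T_\delta^d)}\|g\|_{H^\sigma(\T_\delta^d)}$ for all $f,g\in H^\sigma(\T_\delta^d)$, together with the elementary fact that complex conjugation $f\mapsto\bar f$ is an isometry of $H^\sigma(\T_\delta^d)$. I would split $N_w$ into its two contributions, the potential part $\psi\mapsto -iw(t,\cdot)\psi$ and the genuinely nonlinear part $\psi\mapsto -i\beta|\psi|^{2\kappa}\psi$, estimate the increment of each on the ball of radius $r$, and add the two resulting Lipschitz constants.

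For the potential term, since $t\mapsto w(t,\cdot)$ belongs to $\mathcal C^\infty(\R,H^\sigma(\T_\delta^d))$, it is in particular bounded on the compact interval $[0,T]$; set $M_T=\sup_{t\in[0,T]}\|w(t,\cdot)\|_{H^\sigma(\T_\delta^d)}$. Then for $t\in[0,T]$ and $\varphi_1,\varphi_2\in H^\sigma(\T_\delta^d)$ the algebra property gives $\|w(t,\cdot)\varphi_1-w(t,\cdot)\varphi_2\|_{H^\sigma(\T_\delta^d)}\leq C_\sigma M_T\|\varphi_1-\varphi_2\|_{H^\sigma(\T_\delta^d)}$; note that this bound does not even use the \emph{a priori} bound $r$, since the map is linear in $\psi$.

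For the nonlinear term, the key observation is that $|\psi|^{2\kappa}\psi=\psi^{\kappa+1}\bar\psi^{\kappa}$ is a polynomial of total degree $2\kappa+1$ in $\psi$ and $\bar\psi$ — this is the one place where a little care is needed, because it is \emph{not} a holomorphic function of $\psi$, so one cannot simply invoke complex differentiability of $z\mapsto z^{\kappa+1}$. Instead I would write the difference $|\varphi_1|^{2\kappa}\varphi_1-|\varphi_2|^{2\kappa}\varphi_2$ as a telescoping sum in which the $2\kappa+1$ factors equal to $\varphi_1$ or $\bar\varphi_1$ are replaced by $\varphi_2$, resp. $\bar\varphi_2$, one at a time. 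Each of the $2\kappa+1$ resulting terms is a product of $2\kappa$ factors, each of which is one of $\varphi_1,\bar\varphi_1,\varphi_2,\bar\varphi_2$, times a single factor of the form $\varphi_1-\varphi_2$ or $\bar\varphi_1-\bar\varphi_2$. Applying the algebra inequality $2\kappa$ times, and using $\|\bar\varphi_i\|_{H^\sigma(\T_\delta^d)}=\|\varphi_i\|_{H^\sigma(\T_\delta^d)}\leq r$ together with $\|\bar\varphi_1-\bar\varphi_2\|_{H^\sigma(\T_\delta^d)}=\|\varphi_1-\varphi_2\|_{H^\sigma(\T_\delta^d)}$, each term is bounded by $C_\sigma^{2\kappa}r^{2\kappa}\|\varphi_1-\varphi_2\|_{H^\sigma(\T_\delta^d)}$; summing the $2\kappa+1$ contributions and multiplying by $|\beta|$ gives the Lipschitz bound for the nonlinear part. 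Adding the two estimates yields the lemma with $C=C_\sigma M_T+|\beta|(2\kappa+1)C_\sigma^{2\kappa}r^{2\kappa}$.

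There is no serious obstacle here; the only points requiring attention are (i) correctly setting up the telescoping identity for the non-holomorphic monomial $\psi^{\kappa+1}\bar\psi^{\kappa}$, and (ii) keeping track of how many times the algebra constant $C_\sigma$ is used and of the dependence of the final constant $C$ on $T$ (through $M_T$) and on $r$. If one prefers to avoid the explicit combinatorics, an alternative is to regard $P(u,v)=u^{\kappa+1}v^{\kappa}$ as a polynomial, hence smooth, map $H^\sigma(\T_\delta^d)\times H^\sigma(\T_\delta^d)\to H^\sigma(\T_\delta^d)$, which is therefore locally Lipschitz on bounded sets, and then compose it with the real-linear isometry $\psi\mapsto(\psi,\bar\psi)$.
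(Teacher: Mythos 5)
Your proof is correct: the boundedness of $t\mapsto w(t,\cdot)$ on $[0,T]$, the algebra property of $H^\sigma(\T_\delta^d)$ for $\sigma>d/2$, the isometry of conjugation, and the telescoping of the monomial $\psi^{\kappa+1}\bar\psi^{\kappa}$ together give exactly the stated Lipschitz bound, with a constant depending on $T$ and $r$ as required. The paper in fact states this lemma without proof, treating it as a standard consequence of the algebra property (as hinted in the proof of Theorem \ref{th:ordreERK}), so your argument is precisely the intended one, written out in full.
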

\begin{remark}
  Note that this Lipschitz property is also true if one replaces $N_w$
with $N_W(t,\psi)=-iW(t,\cdot)\psi(\cdot)-i\beta|\psi|^{2\kappa}\psi(\cdot)$
and the norms are taken in $H^\sigma(\R^d)$ with $\sigma>d/2$,
but is no longer true with
$N_{ V}=-i V(t,\cdot)\psi(\cdot)-i\beta|\psi|^{2\kappa}\psi(\cdot)$.
\end{remark}
We recover the fact that the Cauchy problem \eqref{Cauchy1}
is well-posed in $H^\sigma(\T_\delta^d)$.

\subsection{Outline of the paper}

The outline of the paper is as follows: Section 2 is devoted to exponential
Runge--Kutta methods. Firstly, we briefly describe the construction of these
exponential methods, and recall what is the socalled underlying Runge--Kutta
method {\color{black}\cite{HoOs2010}}, which is defined using $s$ collocation points.
Secondly, we prove for the equation \eqref{Cauchy1} that if the
$s$ collocation points are distinct, then the exponential Runge-Kutta method applied to Problem
\eqref{Cauchy1} has order $s$.
{\color{black}Moreover, we observe numerically
that if we use Gauss collocation points,
then we obtain order $2s$ (superconvergence).}
Section 3 is devoted to exponential integrators named Lawson methods.
These methods are collocation methods on a new evolution equation, obtained
from Problem \eqref{Cauchy1} {\it via} another change of unknown.
We show that Lawson methods applied to the simplified equation
keep their classical order.
In particular, the methods with $s$ stages defined with Gauss points
have order $2s$ {\color{black}(superconvergence)}.
Moreover, they preserve quadratic invariants up to round off errors.
Section 4 is devoted to the comparison of our methods with other methods such
as splitting methods as used in \cite{BMTZ2013}.
For our numerical experiments in dimension 1 and 2,
the algorithm in time is supplemented with
the fast Fourier transform (FFT) method in space, in a periodic domain.
Finally we end the paper with conclusion and outlook. 

\section{Exponential Runge--Kutta methods}
\label{expRK}

The main idea behind exponential integrators is to integrate exactly
the linear part of the problem and then to use an appropriate approximation
of the nonlinear part.
Exponential Runge--Kutta (ERK) methods are particular exponential integrators.
These methods have been derived and analysed for semi-linear parabolic
Cauchy problems (see for example \cite{HoOs2005} for collocation methods,
\cite{HoOs2005_2} for explicit methods and \cite{HoOs2010}
for a survey on exponential integrators).
They also have been used in \cite{Du2009} for solving linear and semi-linear
Schr\"odinger Cauchy problems on the $d$-dimensional torus. {\color{black} They have been used to solve nonlinear Schr\"odinger equations for optical fibers (see the interaction picture method analyzed in \cite{BaFeMaMeTe2016}).}

In this section, we introduce, analyse and use ERK methods
to solve numerically equation \eqref{GPeqnv2}.

\subsection{Notations and description of the method}

In order to solve numerically problem \eqref{Cauchy1},
we consider the following ERK methods of collocation type.
We refer to \cite{HoOs2005} for a derivation of such methods for semi-linear problems based on variation-of-constants formula.

Let $T>0$ be the final computational time and $(t_{n})_{0\leq n \leq M}$
be a uniform subdivision of $[0,T]$ with $M+1$ points
{\it i.e.} $t_{n}=nh$ with $h=T/M$. Let $s \in \N^{\star}$ and $c_{1}, \cdots, c_{s} \in [0,1]$ be given such that for all
$(i,j) \in \{1,\cdots,s\}^{2}, c_{i}\neq c_{j}$ if $i\neq j$.
For some $n \in \{0,\cdots,M-1\}$, we assume we have an approximation
$\psi_{n}$ of the exact solution $\psi(t_{n})$ of problem \eqref{Cauchy1}
at time $t_n$.
Using an ERK method consists in computing an approximation
$\psi_{n+1}$ from  $\psi_{n}$ in the following way.
First, we solve the nonlinear system of $s$ equations
\begin{equation}
\label{nonlinsys}
\psi_{n,k}=e^{c_{k}hL}\psi_{n} + h
\sum_{\ell=1}^{s} a_{k,\ell}(hL)N_w(t_{n}+c_{\ell}h,\psi_{n,\ell}),
\quad 1 \leq k \leq s,
\end{equation}
where the unknowns are $\psi_{n,1}, \cdots, \psi_{n,s}$ and the $s^{2}$
coefficients $\left(a_{k,l}(hL)\right)_{(k,l)\in \{1,\cdots,s\}^{2}}$ are
linear continuous operators on $H^\sigma(\T_\delta^d)$ defined by
\begin{equation}
\label{coeffRK1}
a_{k,\ell}(hL)=\frac{1}{h} \int_{0}^{c_{k}h}
e^{(c_{k}h-\sigma)L}\mathcal{L}_{\ell}(\sigma) d\sigma,
\end{equation}
with $(\mathcal{L}_{\ell})_{1\leq \ell \leq s}$ the Lagrange polynomials defined by
\begin{equation}
\label{Lagrange}
\mathcal{L}_{\ell}(\tau)
=\prod_{j=1,j\neq \ell}^{s} \frac{\tau/h-c_{j}}{c_{\ell}-c_{j}},\quad 1 \leq \ell \leq s.
\end{equation}
Then we compute $\psi_{n+1}$ using
\begin{equation}
\label{Nexttime}
\psi_{n+1}=e^{hL}\psi_{n}
+ h \sum_{k=1}^{s} b_{k}(hL)N_w(t_{n}+c_kh,\psi_{n,k}),
\end{equation}
where
\begin{equation}
\label{coeffRK2}
\forall k\in\{1,\dots,s\},\quad
b_{k}(hL)=\frac{1}{h} \int_{0}^{h} e^{(h-\sigma)L}\mathcal{L}_{k}(\sigma)d\sigma.
\end{equation}

\begin{remark}
\label{remarkunderlyingRK}
  If we set $L=0$ in formulae \eqref{coeffRK1} and \eqref{coeffRK2},
then one recovers classical formulae defining Runge-Kutta collocation methods.
The Runge-Kutta collocation method defined by the corresponding
coefficients is called the underlying Runge-Kutta method
{\color{black}\cite{HoOs2010}}.
\end{remark}

Note that in order to be able to compute $\psi_{n+1}$ from $\psi_n$,
we have to precompute the coefficients $a_{k,\ell}(hL)$ and $b_{k}(hL)$
for all $k,\ell \in \{1,\cdots, s\}$.
We present an accurate and efficient way of precomputing these coefficents
in the next subsection.
Of course, the spatial discretization of the operator $L$ has to be
specified.

\subsection{Precomputation of the coefficients}
\label{Precompcoeffs}

It is well known that the {\color{black}Laplacian} operator 
on $H^\sigma(\T_\delta^d)$ is self-adjoint with eigenvalues
\begin{equation*}
\omega_{p}=-\left(\frac{2\pi}{\delta}\right)^{2}(p_1^{2}+\dots+p_d^{2}), \quad p=(p_1,\dots,p_d) \in \Z^{d}.
\end{equation*}
We discretize the problem in space using a uniform grid with $K^2$
points and rely on FFT techniques.
The eigenvalues of the discretized version of the periodic 
{\color{black}$L=i\Delta/2$} operator
are $i\omega_{p}/2$ for $0\leq |p|_\infty\leq K-1$, where $|p|_\infty={\rm max}\{|p_1|,\dots,|p_d|\}$.
Since this operator is also self-adjoint, the computation of the discretized
versions of the operators $a_{k,\ell}$ and $b_k$ amounts to the computation
of the values of 
\begin{equation}
\label{coeffRK1_2}
a_{k,\ell}(ih\omega_{p}/2)=\frac{1}{h} \int_{0}^{c_{k}h} e^{(c_{k}h-\sigma)i\omega_{p}/2}\mathcal{L}_{\ell}(\sigma) d\sigma,
\end{equation}
and
\begin{equation}
\label{coeffRK2_2}
b_{k}(ih\omega_{p}/2)=\frac{1}{h} \int_{0}^{h} e^{(h-\sigma)i\omega_{p}/2}\mathcal{L}_{k}(\sigma)d\sigma.
\end{equation}
We explain how one can compute these coefficients.
These functions $a_{k,\ell}$ and $b_k$ are holomorphic functions over $\C$.
After integration ({\it e.g.} using integration by parts)
over $(0,h)$ of the integrands defining these operators,
we obtain holomorphic functions (say, of {\color{black}$h\omega_p$}) with a removable
singularity at the origin of the complex plane.
For example, when the method has $s=2$ stages, the coefficient $a_{1,1}$ reads
\begin{equation}
\label{formulacoeff}
  a_{1,1}(ih\omega_{p}/2)=c_1^2\frac{e^{i c_1 h \frac{\omega_{p}}{2}}(1-i c_2h\frac{\omega_{p}}{2})-1+i h \frac{\omega_{p,q}}{2}(c_2-c_1)}{(c_1-c_2)((i c_1 h \omega_{p}/2)^2)}.
\end{equation}
{\color{black}
Applying directly a formula such as \eqref{formulacoeff} yields
numerical instabilities in the computation of the ratio when
$h|\omega_{p}|\ll 1$.
Therefore, we use two different strategies for the computation
of the coefficients \eqref{coeffRK1_2} and \eqref{coeffRK2_2}.
When $h|\omega_{p}|\leq 1/2$, we use a discretized version
of the Cauchy representation formula for a holomorphic function
$f$
\begin{equation}
\label{Cauchyintegral}
  f(z)
= \frac{1}{2i\pi} \int_{\mathcal C} \frac{f(\omega)}{\omega-z} {\rm d}\omega,
\end{equation}
(where $\mathcal C$ is the positively oriented unit circle and $|z|<1$),
following the method presented in \cite{KaTr2005, ScTr2007}.
When $h|\omega_{p}|>1/2$, we simply evaluate directly formulas of the
form \eqref{formulacoeff}.
}

\begin{remark}
  For the evaluation of $f(ih\omega_{p})$ when $h|\omega_{p}|\leq 1/2$,
we use the trapezoidal quadrature rule to compute an approximation
of the integral in \eqref{Cauchyintegral}.
Since the unit circle is a smooth path in the complex plane,
the function $t\mapsto i e^{it} f(e^{it})/(e^{it}-ih\omega_{p})$ is
a smooth $2\pi$-periodic function and hence the trapezoidal rule
has infinite order.
\end{remark}

One can in fact compute these $s\times (s+1)\times K^2$ coefficients
independently and use parallel computing.
Once these coefficients are computed, we can begin the time-stepping method
and no additional computation is required provided the discretization parameters $h$ and $K$ are fixed.

\subsection{Result}

For the solution of the Cauchy problem \eqref{Cauchy1}, using
an ERK method defined by the $s$ points
$0\leq c_1<\dots<c_s\leq 1$, we have the following
\begin{theorem}
\label{th:ordreERK}
  For all $\psi_0\in H^\sigma(\T_\delta^d)$ and all $T>0$
such that the exact solution of \eqref{Cauchy1} is smooth over $[0,T]$,
there exists $C,h_0>0$ such that for all $h\in(0,h_0)$,
the ERK method defined by
\eqref{nonlinsys}-\eqref{Nexttime} starting from $\psi_0$ is well-defined.
Moreover, we have for all $h\in(0,h_0)$ and $n\in\N$ such that $nh\leq T$,
\begin{equation*}
  \|\psi(t_n)-\psi_n\|_{H^\sigma(\T_\delta^d)} \leq C h^s
\end{equation*}
\end{theorem}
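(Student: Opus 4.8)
The plan is to follow the classical strategy for order analysis of exponential collocation methods (as in \cite{HoOs2005}), adapted to the fact that here $L=i\Delta/2$ generates a \emph{group of isometries} on $H^\sigma(\T_\delta^d)$ rather than an analytic semigroup. Because of the isometry property and the local Lipschitz bound of Lemma~\ref{lemmaLipschitz}, the parabolic smoothing arguments of \cite{HoOs2005} become unnecessary, and the situation is in fact closer to the classical ODE theory of collocation Runge--Kutta methods; the gain in order coming from the $\varphi$-functions (the $a_{k,\ell}$, $b_k$) is not needed, only their uniform boundedness. First I would fix $T>0$ and a compact tube around the exact solution: since $\psi\in\mathcal C([0,T],H^\sigma)$ is smooth, set $r:=1+\sup_{[0,T]}\|\psi(t)\|_{H^\sigma}$ and let $C_L$ be the Lipschitz constant of $N_w$ on the ball of radius $r+1$ given by Lemma~\ref{lemmaLipschitz}; also note $\|e^{\alpha L}\|_{\mathcal L(H^\sigma)}=1$ and $\|a_{k,\ell}(hL)\|,\|b_k(hL)\|\le \Lambda$ uniformly in $h\in(0,1]$ (this follows from \eqref{coeffRK1}--\eqref{coeffRK2} and the isometry of $e^{\sigma L}$, with $\Lambda$ depending only on the $c_k$).

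The core of the argument is a defect (local error) estimate followed by a discrete Gronwall/Lady Windermere's fan argument. For the \textbf{local error}, I would insert the exact solution into the scheme: define the internal defects $\Delta_{n,k}$ and the step defect $\delta_{n+1}$ by
\begin{align*}
\psi(t_n+c_k h) &= e^{c_k hL}\psi(t_n) + h\sum_{\ell=1}^s a_{k,\ell}(hL)\,N_w(t_n+c_\ell h,\psi(t_n+c_\ell h)) + \Delta_{n,k},\\
\psi(t_{n+1}) &= e^{hL}\psi(t_n) + h\sum_{k=1}^s b_k(hL)\,N_w(t_n+c_k h,\psi(t_n+c_k h)) + \delta_{n+1}.
\end{align*}
Using the variation-of-constants formula $\psi(t_n+\tau)=e^{\tau L}\psi(t_n)+\int_0^\tau e^{(\tau-\sigma)L}g(t_n+\sigma)\,d\sigma$ with $g(t):=N_w(t,\psi(t))$, the defects measure exactly the error of interpolating $\sigma\mapsto g(t_n+\sigma)$ at the nodes $c_\ell h$ inside the integral $\int_0^{c_k h}e^{(c_k h-\sigma)L}(\cdot)\,d\sigma$ (resp. $\int_0^h$). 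Since $g\in\mathcal C^\infty([0,T],H^\sigma)$ (here smoothness of the exact solution is used in an essential way), the Lagrange interpolation error in the $H^\sigma$-norm is $O(h^s)$ for each $\sigma$-slice, and $e^{(c_k h-\sigma)L}$ is an isometry, so $\|\Delta_{n,k}\|_{H^\sigma}\le C_1 h^{s+1}$ and $\|\delta_{n+1}\|_{H^\sigma}\le C_1 h^{s+1}$, with $C_1$ depending on $\sup_{[0,T]}\|g^{(s)}\|_{H^\sigma}$ and the $c_k$. (If one wanted the superconvergence order $2s$ for Gauss nodes one would need to exploit cancellations via the $\varphi$-functions and quadrature order; but for the claimed order $s$ the crude interpolation bound suffices, and this is why the theorem only asserts $h^s$.)

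Next I would set up the \textbf{error recursion}. Let $e_n:=\psi(t_n)-\psi_n$ and $E_{n,k}:=\psi(t_n+c_k h)-\psi_{n,k}$. Subtracting \eqref{nonlinsys} from the internal-defect equation and using the Lipschitz bound gives, as long as all $\psi_{n,\ell}$ stay in the ball of radius $r+1$,
\[
\|E_{n,k}\|_{H^\sigma} \le \|e_n\|_{H^\sigma} + h\,\Lambda\, C_L \sum_{\ell=1}^s \|E_{n,\ell}\|_{H^\sigma} + C_1 h^{s+1}.
\]
For $h$ small enough that $h\,\Lambda\,C_L\,s\le 1/2$, the implicit system is a contraction, hence the internal stages $\psi_{n,k}$ are well-defined (this handles the "well-defined" claim by a fixed-point argument, carried out inductively in $n$) and $\sum_k\|E_{n,k}\|\le 2s(\|e_n\|+C_1 h^{s+1})$. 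Plugging this into the difference of \eqref{Nexttime} and the step-defect equation yields
\[
\|e_{n+1}\|_{H^\sigma} \le \|e_n\|_{H^\sigma} + h\,\Lambda\,C_L\sum_{k=1}^s\|E_{n,k}\|_{H^\sigma} + C_1 h^{s+1} \le (1+C_2 h)\,\|e_n\|_{H^\sigma} + C_3 h^{s+1}.
\]
Discrete Gronwall then gives $\|e_n\|_{H^\sigma}\le \frac{C_3}{C_2}\big(e^{C_2 T}-1\big)h^s =: C h^s$ for $nh\le T$. Finally I would close the bootstrap: choosing $h_0$ so small that $Ch_0^s<1$ and $2s\,h_0(1+C h_0^s)(1+\Lambda C_L)<1$, an induction on $n$ shows that if $\|\psi(t_n)-\psi_n\|\le r-1+Ch^s<r$ and the internal stages solve, then $\psi_{n,k}$ lies in the radius-$(r+1)$ ball (so the Lipschitz constant $C_L$ was legitimately used) and $\|e_{n+1}\|\le Ch^s$ as well; the base case $e_0=0$ is immediate.

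The \textbf{main obstacle} is bookkeeping rather than conceptual: one must run the well-posedness of the internal nonlinear system \eqref{nonlinsys}, the \emph{a priori} containment of all iterates in a fixed ball, and the error estimate \emph{simultaneously} in a single induction on $n$, since each relies on the others. The only genuinely analytic input beyond elementary estimates is (i) the isometry of $e^{\alpha L}$ on $H^\sigma$, which removes all semigroup-type difficulties and makes the $\varphi$-functions harmless bounded operators, and (ii) the uniform-in-$h$ boundedness of $a_{k,\ell}(hL),b_k(hL)$, which follows from their integral representations \eqref{coeffRK1}--\eqref{coeffRK2} together with that isometry; both are essentially already recorded in the text preceding the theorem.
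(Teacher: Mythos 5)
Your proposal is correct (up to the bookkeeping you explicitly acknowledge), but it takes a genuinely different route from the paper: the paper does not reprove the convergence estimate at all, it simply invokes Theorem 3.6 of \cite{Du2009} and checks its four hypotheses --- the structure of the nonlinearity \eqref{eq:defNper}, the local Lipschitz property coming from Lemma \ref{lemmaLipschitz} together with the algebra property of $H^\sigma(\T_\delta^d)$ for $\sigma>d/2$, and the temporal smoothness of the exact solution. Your direct argument --- writing the defect via the variation-of-constants formula as a Lagrange interpolation error of $t\mapsto N_w(t,\psi(t))$ propagated by the isometric group $e^{tL}$, the uniform-in-$h$ bound on the operators $a_{k,\ell}(hL)$, $b_k(hL)$ from \eqref{coeffRK1}--\eqref{coeffRK2}, the contraction argument for the internal stages of \eqref{nonlinsys}, and the discrete Gronwall step run simultaneously with the induction keeping all iterates in a fixed ball --- is essentially a self-contained reproof, in the present setting, of the cited result. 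What each approach buys: the paper's proof is two lines and delegates all technicalities to the literature, at the price of opacity about where the hypotheses are used; yours makes explicit that the only structural inputs are the isometry of $e^{\alpha L}$ on $H^\sigma(\T_\delta^d)$ (so no parabolic smoothing as in \cite{HoOs2005} is needed) and the smoothness of the composed nonlinearity, and it also makes transparent why only order $s$ is asserted here (the crude interpolation bound on each subinterval), consistently with the paper's remark that superconvergence for Gauss nodes is observed numerically but not proved. Two small points you should tighten if this were written out in full: the Lagrange remainder estimate must be justified for Banach-space-valued functions (e.g.\ via the integral form of the remainder or by duality), and the fixed-point argument for \eqref{nonlinsys} should verify that the iteration map sends the chosen ball into itself (using $N_w(t,0)=0$ and the uniform bound on the coefficients), not only that it is a contraction there; neither is a gap in substance.
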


\begin{proof}
  This is a consequence of Theorem $3.6$ in \cite{Du2009}.
Let us check the hypotheses of Theorem 3.6. Hypothesis 3.1 is
straightforward from \eqref{eq:defNper}. Hypothesis 3.2 follows from
Lemma \ref{lemmaLipschitz} and the fact that $H^\sigma(\T_\delta^d)$ is
an algebra for $\sigma>d/2$. Hypotheses 3.3 and 3.4 follow from our
assumption on the temporal smoothness of the exact solution.
\end{proof}

 \begin{remark}
{\color{black}
As mentioned in the introduction, the temporal constant $C$ in
Theorem \ref{th:ordreERK} is \emph{not} uniform in the truncation
parameter $\delta$.
}
Note that our proof extends to general nonlinear Schr\"odinger
equations with a smooth time-dependant potential on the whole space $\R^d$
as soon as $\sigma>d/2$ and the multiplication by $V$ is a continuous
operator from $H^{\sigma}$ to itself.
 \end{remark}

\begin{remark}
  Even if we are not able to prove it, we observe in the numerical experiments that if the $s$ collocation points of the ERK method are Gauss points, the method is of numerical order $2s$ (see subsection \ref{Comp1d}).
  This indeed may be true when the methods are applied to the problem \eqref{Cauchy1} on the torus $\T_\delta^d$ because of the periodic boundary conditions while it may {\it not} be true when the methods are applied to the problem \eqref{GPeqnv2} set on the whole space $\R^d$. This is the case for example when one considers such methods applied to semilinear parabolic problems \cite{HoOs2005}.
This would indeed be another limitation of the reduction of the problem from $\R^d$ to $\T^d_\delta$ (see subsection \ref{subsec:simplifiedsetting}). 
In any case, the proof of superconvergence for Gauss-ERK methods applied to
problem \eqref{Cauchy1} on the torus $\T_\delta^d$ does not seem to be a
straightforward adaptation of the classical result for ODEs.
An option to prove it could be to use appropriate Taylor expansions of the exact
solution of the problem, but the way one can order the terms appearing in
the consistency error to obtain order $2s$ is really non trivial.
\end{remark}

{\color{black}  In the following we call Gauss-ERK method an ERK method using Gauss collocation points.}

\section{Lawson methods}
\label{Lawsonmethods}

In \cite{La1967}, Lawson considers the problem of designing
some Runge--Kutta type methods for stiff ordinary differential equations.
The idea is to perform a change of unknowns to transform the stiff system
into a related nonstiff one.
Then some basic Runge--Kutta method is applied to the related problem.
The combination is termed a generalized Runge--Kutta method in \cite{La1967}
and often called Lawson method. 

The goal of this section is to describe the implementation of such methods
on the problem \eqref{Cauchy1}
seen as an ordinary differential equation in time, to perform
an analysis of the order of convergence of these methods as well
as some of their preservation properties.

\subsection{Notations and description of the method}

Let us consider the equation \eqref{Cauchy1},
and set the following change of unknowns (so-called Lawson transformation),
\begin{equation}
\label{Lawsontransf}
u(t,{\mathbf{x}})=e^{-Lt}\psi(t,{\mathbf{x}}).
\end{equation}
Then $\psi$ solves \eqref{Cauchy1} if and only if
$u$ solves
\begin{equation}
\label{Cauchy2}
\begin{array}{ll}
\partial_{t} u(t,{\mathbf{x}})=e^{-Lt}N_w(t,e^{Lt}u({t,\mathbf{x}})), \quad (t,{\mathbf{x}}) \in [0,T] \times \T_\delta^d  \\[2mm]
u(0,{\mathbf{x}})=\psi(0,{\mathbf{x}})=\psi_{0}({\mathbf{x}}), \quad {\mathbf{x}} \in \T_\delta^d.
\end{array}
\end{equation}  
Now one can apply a classical Runge--Kutta method to \eqref{Cauchy2} seen as
an ordinary differential equation in time.
Assume $(a_{k,l})_{1\leq k,l\leq s}$ and $(b_k)_{1\leq k\leq s}$
are {\color{black}a matrix and a vector of real entries. }
Set 
\begin{equation}
\label{defci}
  \forall k\in\{1,\dots,s\},\qquad
  c_k:=\sum_{\ell=1}^s a_{k,\ell},
\end{equation}
and consider the $s$-stage classical
Runge--Kutta method with Butcher tableau given by
\begin{equation*}
  \begin{array}{c|ccc}
    c_1 & a_{1,1} & \cdots & a_{1,s} \\
    \vdots & \vdots & & \vdots \\
c_s & a_{s,1} &\cdots & a_{s,s} \\ \hline
 & b_1 & \cdots & b_s
  \end{array}
\end{equation*}
Assume that this Runge-Kutta method is of order at least $1$.
This means that the following condition is fulfilled:
\begin{equation}
\label{condRK1}
  \sum_{k=1}^s b_{k}=1.
\end{equation}
Applying this Runge--Kutta method to the problem \eqref{Cauchy2}
defines a Lawson method:
we compute an approximation $u_{n+1}$
at time $t_{n+1}$ of the exact solution from an approximation $u_n$ at time $t_n$
by solving the system of $s$ nonlinear equations (the unknowns
being the $(u_{n,k})_{1\leq k\leq s}$):
\begin{equation}
\label{Lawsonv}
  u_{n,k} = u_n + h \sum_{\ell=1}^s a_{k,\ell} e^{-(t_n+c_\ell h) L}
N_w\left(t_n+c_\ell h,e^{(t_n+c_\ell h) L} u_{n,\ell}\right),
\end{equation}
and then we compute $u_{n+1}$ through the formula
\begin{equation}
\label{Lawsonv2}
  u_{n+1} = u_n + h \sum_{k=1}^s b_k e^{-(t_n+c_k h) L} 
N_w\left(t_n+c_k h,e^{(t_n+c_k h) L} u_{n,k}\right).
\end{equation}
Equivalently, the Lawson method for the unknowns
\begin{equation}
\label{LawsonPsi1}
  \psi_{n,i}:=e^{(t_n+c_i h)L}u_{n,i}
\qquad {\rm and}
\qquad
  \psi_n:= e^{t_n L} u_n,
\end{equation}
consists in solving the $s$ nonlinear equations
\begin{equation}
\label{LawsonPsi2}
  \psi_{n,k} = e^{c_k h L} \psi_n + h \sum_{\ell=1}^s a_{k,\ell} e^{(c_k-c_\ell) h L}
N_w\left(t_n+c_\ell h, \psi_{n,\ell}\right),
\end{equation}
and then computing $\psi_{n+1}$ through the formula
\begin{equation}
\label{LawsonPsi3}
  \psi_{n+1} = e^{hL} \psi_n + h \sum_{k=1}^s b_k e^{(1-c_k) h L}
N_w\left(t_n+c_k h, \psi_{n,k}\right).
\end{equation}
We simply denote these relations \eqref{LawsonPsi2}-\eqref{LawsonPsi3} by
$\psi_{n+1}=\Phi_{t_{n}\rightarrow t_{n+1}}(\psi_{n})$.
As for the ERK methods of \eqref{expRK},
the Runge--Kutta method defined by $a_{k,\ell}, b_{k}$
is referred to as the underlying Runge--Kutta method. 

Note that, in view of Lemma \ref{lemmaLipschitz},
the Lawson method \eqref{LawsonPsi1}-\eqref{LawsonPsi3}
is well defined in $H^\sigma(\T_\delta^d)$
provided $h>0$ is small enough.

\subsection{Results}

In this section, we present some results on the Lawson method given by \eqref{LawsonPsi2}-\eqref{LawsonPsi3}. First of all, since equation \eqref{Cauchy1} is time reversible, we give a sufficient condition for the Lawson method to be symmetric. We follow ideas developed in \cite{CeCoOw2008}, where the authors are solving an autonomous NLS equation, and we show that although equation \eqref{Cauchy1} is non autonomous, the sufficient condition is quite similar.

\begin{theorem} \label{thsym}
  Assume that the $s$-stage Runge--Kutta method defined by \mbox{}
$(a_{k,\ell})_{1\leq k,\ell\leq s}$ and $(b_k)_{1\leq k\leq s}$ satisfies
\eqref{condRK1} so
that it is of order at least 1.
Assume that this method satisfies
\begin{equation}
  \label{Condsymetrie}
  \forall (k,\ell)\in\{1,\dots,s\}^2,\qquad
  a_{s+1-k,s+1-\ell}+a_{k,\ell}=b_\ell,
\end{equation}
so that it is symmetric (see Theorem 2.3 in \cite{HLW}).
Then the Lawson method defined by \eqref{LawsonPsi2}-\eqref{LawsonPsi3} is
also symmetric.
\end{theorem}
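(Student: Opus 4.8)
The plan is to show symmetry of the Lawson method directly from the definition: a one-step integrator $\Phi_{t_n \to t_{n+1}}$ is symmetric if and only if $\Phi_{t_{n+1}\to t_n} = \Phi_{t_n\to t_{n+1}}^{-1}$, equivalently if applying the method with step $-h$ and exchanging the roles of $\psi_n$ and $\psi_{n+1}$ reproduces the same relations. So first I would write down the ``reversed'' scheme: starting from $\psi_{n+1}$ at time $t_{n+1}$ and taking a step of size $-h$ to reach $\psi_n$ at time $t_n$, with internal stages $\psi_{n+1,k}^{\,*}$ at times $t_{n+1} - c_k h = t_n + (1-c_k)h$. Plugging $-h$ into \eqref{LawsonPsi2}--\eqref{LawsonPsi3} gives
\begin{equation*}
  \psi_{n+1,k}^{\,*} = e^{-c_k h L}\psi_{n+1} - h\sum_{\ell=1}^s a_{k,\ell} e^{-(c_k-c_\ell)hL} N_w\bigl(t_{n+1}-c_\ell h, \psi_{n+1,\ell}^{\,*}\bigr),
\end{equation*}
\begin{equation*}
  \psi_n = e^{-hL}\psi_{n+1} - h\sum_{k=1}^s b_k e^{-(1-c_k)hL} N_w\bigl(t_{n+1}-c_k h, \psi_{n+1,k}^{\,*}\bigr).
\end{equation*}

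Next I would perform the natural change of indexing $k \mapsto s+1-k$, $\ell\mapsto s+1-\ell$ and set $\psi_{n,k} := \psi_{n+1,s+1-k}^{\,*}$, which sits at time $t_{n+1} - c_{s+1-k}h$; here one uses the standard fact that for a method of order $\geq 1$ the nodes produced by a symmetric tableau satisfy $c_{s+1-k} = 1 - c_k$ (this follows from \eqref{defci}, \eqref{condRK1} and \eqref{Condsymetrie} by summing the symmetry relation over $\ell$: $\sum_\ell a_{s+1-k,s+1-\ell} = \sum_\ell(b_\ell - a_{k,\ell}) = 1 - c_k$, and $\sum_\ell a_{s+1-k,s+1-\ell} = \sum_{\ell'} a_{s+1-k,\ell'} = c_{s+1-k}$). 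Hence $\psi_{n,k}$ sits at $t_{n+1} - (1-c_k)h = t_n + c_k h$, which matches the forward stage time. Then I rewrite the reversed relations in terms of the $\psi_{n,k}$: the stage equation becomes, after solving for $\psi_{n,k}$ and using $e^{-(1-c_k)hL}\psi_{n+1} = e^{c_k h L}(e^{-hL}\psi_{n+1})$,
\begin{equation*}
  \psi_{n,k} = e^{c_k h L}\Bigl(e^{-hL}\psi_{n+1}\Bigr) + h\sum_{\ell=1}^s \bigl(b_\ell - a_{s+1-k,s+1-\ell}\bigr) e^{(c_k-c_\ell)hL} N_w\bigl(t_n + c_\ell h, \psi_{n,\ell}\bigr),
\end{equation*}
and the update relation, after substituting $\psi_n = e^{-hL}\psi_{n+1} - h\sum_k b_k e^{-(1-c_k)hL}N_w(\cdots)$ and rearranging, expresses $e^{-hL}\psi_{n+1}$ in terms of $\psi_n$ and the stage values. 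The symmetry hypothesis \eqref{Condsymetrie} says precisely $b_\ell - a_{s+1-k,s+1-\ell} = a_{k,\ell}$, so the stage equation becomes exactly \eqref{LawsonPsi2} with $\psi_n$ in place of $e^{-hL}\psi_{n+1}$ — but the update relation forces $e^{-hL}\psi_{n+1} - h\sum b_k e^{-(1-c_k)hL}N_w = \psi_n$, i.e. $\psi_{n+1} = e^{hL}\psi_n + h\sum_k b_k e^{(1-c_k)hL}N_w(t_n+c_kh,\psi_{n,k})$, which is \eqref{LawsonPsi3}. Thus the reversed scheme coincides with the forward scheme, proving symmetry.

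The bookkeeping I expect to be the main obstacle is keeping the time arguments of $N_w$ straight through the index reversal — since the equation is nonautonomous this is exactly the point where the proof departs from the autonomous case of \cite{CeCoOw2008}, and one must check that $t_{n+1} - c_\ell h$ becomes $t_n + c_{s+1-\ell}h = t_n + (1-c_\ell)h$... wait, that is not $t_n + c_\ell h$. Let me be careful: with the relabeling $\ell \mapsto s+1-\ell$, the stage at time $t_{n+1} - c_\ell h$ gets index $s+1-\ell$, so $\psi_{n,s+1-\ell}$ sits at $t_{n+1}-c_\ell h = t_n + (1-c_\ell)h = t_n + c_{s+1-\ell}h$; renaming the summation index back to $\ell$, the stage $\psi_{n,\ell}$ sits at $t_n + c_\ell h$ and carries $N_w(t_n + c_\ell h, \psi_{n,\ell})$, consistent with the forward scheme. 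I would present this index manipulation carefully and verify that no spurious sign or time shift survives; once the indices are aligned, the algebraic identity reduces entirely to \eqref{Condsymetrie}, and the argument is complete. It is worth noting that one also needs $\{c_1,\dots,c_s\}$ invariant under $k\mapsto s+1-k$ together with monotonicity $c_1<\dots<c_s$ to identify nodes unambiguously, which again is a consequence of $c_{s+1-k}=1-c_k$.
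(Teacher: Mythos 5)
Your route is the same as the paper's: form the adjoint step by exchanging $t_n\leftrightarrow t_{n+1}$ and $h\to-h$, reverse the stage indices $k\mapsto s+1-k$, and use \eqref{Condsymetrie} together with $c_{s+1-k}=1-c_k$ to recognize \eqref{LawsonPsi2}--\eqref{LawsonPsi3}; the only cosmetic difference is that you relabel before substituting the update relation into the stage relation, whereas the paper substitutes first and relabels afterwards. The outline is sound, but two bookkeeping points must be repaired before the argument closes. First, you never state the weight symmetry $b_{s+1-\ell}=b_\ell$ (the paper's \eqref{Condb_l}, obtained by writing \eqref{Condsymetrie} at the pair $(s+1-k,s+1-\ell)$ and comparing with the original relation), yet your computation uses it twice: after the relabelling, the update relation delivers the coefficients $b_{s+1-\ell}$, so the stage coefficient you obtain is $b_{s+1-\ell}-a_{s+1-k,s+1-\ell}$, which equals $a_{k,\ell}$ via \eqref{Condsymetrie} only if $b_{s+1-\ell}=b_\ell$; likewise the reindexed update sum reads $h\sum_k b_{s+1-k}e^{(1-c_k)hL}N_w(t_n+c_kh,\psi_{n,k})$ and does not reduce to \eqref{LawsonPsi3} without it. Relatedly, in your final ``i.e.''~step, multiplying $\psi_n=e^{-hL}\psi_{n+1}-h\sum_k b_k e^{-(1-c_k)hL}N_w(\cdots)$ by $e^{hL}$ produces the exponent $e^{c_khL}$, not $e^{(1-c_k)hL}$: the passage to \eqref{LawsonPsi3} requires the index reversal together with both $c_{s+1-k}=1-c_k$ and $b_{s+1-k}=b_k$, exactly the pair \eqref{Condb_l}--\eqref{Condc_k} the paper isolates at the start.

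Second, your displayed intermediate stage equation is not an identity as written: before substituting the update relation, the correct coefficient is $-a_{s+1-k,s+1-\ell}$ with leading term $e^{c_khL}\bigl(e^{-hL}\psi_{n+1}\bigr)$; once the coefficients $b_\ell-a_{s+1-k,s+1-\ell}$ appear (they come precisely from replacing $e^{-hL}\psi_{n+1}$ by $\psi_n$ plus the weighted nonlinear terms), the leading term must simultaneously become $e^{c_khL}\psi_n$. The display conflates these two stages of the computation. Both issues are easy to fix and the conclusion stands; also, your closing remark about needing distinct, monotone nodes is unnecessary --- the proof is a purely algebraic relabelling of the defining relations, and no identification of nodes is required.
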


\begin{proof}
First of all, let us mention that the symmetry condition
\eqref{Condsymetrie} gives
\begin{equation}
\label{Condb_l}
b_{\ell}=b_{s+1-\ell}, \quad 1 \leq \ell \leq s.
\end{equation}
Moreover, summing \eqref{Condsymetrie} over $\ell$ and
using \eqref{condRK1} we have
\begin{equation}
\label{Condc_k}
1-c_{k}=c_{s+1-k}, \quad 1\leq k \leq s.
\end{equation}
The adjoint $\Phi_{t_{n}\rightarrow t_{n+1}}^{\star}$ of the method $\Phi_{t_{n}\rightarrow t_{n+1}}$ is by definition $\Phi_{t_{n+1}\rightarrow t_{n}}^{-1}$. The relation $\hat{\psi}_{n+1}=\Phi_{t_{n}\rightarrow t_{n+1}}^{\star}(\hat{\psi}_{n})$ is equivalent to $\hat{\psi}_{n}=\Phi_{t_{n+1}\rightarrow t_{n}}(\hat{\psi}_{n+1})$. This corresponds to exchanging $t_{n}$ with $t_{n+1}$ and $h$ with $-h$ in  \eqref{LawsonPsi2}-\eqref{LawsonPsi3}. This leads to
\begin{equation}
\label{LawsonPsi2bis}
  \hat{\psi}_{n,k} = e^{-c_k h L} \hat{\psi}_{n+1} - h \sum_{\ell=1}^s a_{k,\ell} e^{-(c_k-c_\ell) h L}
N_w\left(t_{n+1}-c_\ell h,\hat{\psi}_{n,\ell}\right),
\end{equation}
\begin{equation}
\label{LawsonPsi3bis}
  \hat{\psi}_{n} = e^{-hL} \hat{\psi}_{n+1} - h \sum_{k=1}^s b_k e^{-(1-c_k) h L}
N_w\left(t_{n+1}-c_k h, \hat{\psi}_{n,k}\right).
\end{equation}
Extracting $\hat\psi_{n+1}$ from \eqref{LawsonPsi3bis} gives
\begin{equation}
\label{vianney}
  \hat{\psi}_{n+1} = e^{hL} \hat{\psi}_{n} + h \sum_{k=1}^s b_k e^{c_k h L}
N_{w}\left(t_{n+1}-c_k h, \hat{\psi}_{n,k}\right).
\end{equation}
Plugging this expression into \eqref{LawsonPsi2bis}, we get
\begin{equation*}
  \hat{\psi}_{n,k} = e^{(1-c_k) h L} \hat{\psi}_{n} + h \sum_{\ell=1}^s (b_\ell-a_{k,\ell}) e^{-(c_{k}-c_\ell) h L}
N_w\left(t_{n+1}-c_\ell h, \hat{\psi}_{n,\ell}\right).
\end{equation*}
Using \eqref{Condsymetrie}, we infer
\begin{equation}
\label{vianney2}
  \hat{\psi}_{n,k} = e^{(1-c_k) h L} \hat{\psi}_{n} + h \sum_{\ell=1}^s a_{s+1-k,s+1-\ell} e^{-(c_{k}-c_\ell) h L}
N_w\left(t_{n+1}-c_\ell h, \hat{\psi}_{n,\ell}\right).
\end{equation}
Setting $\tilde\psi_{n,k}=\hat\psi_{n,s+1-k}$ and reordering the sums in \eqref{vianney}-\eqref{vianney2}, we obtain
\begin{equation*}
  \tilde{\psi}_{n,k} = e^{(1-c_{s+1-k}) h L} \hat{\psi}_{n} + h \sum_{\ell=1}^s a_{k,\ell} e^{-(c_{s+1-k}-c_{s+1-\ell}) h L}
N_w\left(t_{n}+(1-c_{s+1-\ell}) h, \tilde{\psi}_{n,\ell}\right),
\end{equation*}
\begin{equation*}
  \hat{\psi}_{n+1} = e^{hL} \hat{\psi}_{n} + h \sum_{k=1}^s b_{s+1-k} e^{c_{s+1-k} h L}
N_w\left(t_{n}+(1-c_{s+1-k} )h, \tilde{\psi}_{n,k}\right).
\end{equation*}
Using \eqref{Condb_l}-\eqref{Condc_k} we conclude that
\begin{equation*}
  \tilde{\psi}_{n,k} = e^{c_{k} h L} \hat{\psi}_{n} + h \sum_{\ell=1}^s a_{k,\ell} e^{(c_{k}-c_{\ell}) h L}
N_w\left(t_{n}+c_{\ell} h, \tilde{\psi}_{n,\ell}\right),
\end{equation*}
\begin{equation*}
  \hat{\psi}_{n+1} = e^{hL} \hat{\psi}_{n} + h \sum_{k=1}^s b_{k} e^{(1-c_{k}) h L}
N_w\left(t_{n}+c_{k} h, \tilde{\psi}_{n,k}\right).
\end{equation*}
This proves that the Lawson method is symmetric. 
\end{proof}

If the underlying Runge--Kutta method preserves quadratic invariants in the sense that it satisfies the Cooper condition, then so does the associated Lawson method.

\begin{theorem}\label{quadinv}
Assume that the underlying Runge-Kutta method satisfies \eqref{condRK1} so
that it is of order at least 1. Assume that it satisfies the Cooper condition,
\begin{equation}
\label{CondCooper}
b_{k}a_{k,\ell}+b_{\ell}a_{\ell,k}=b_{k}b_{\ell}, \quad \forall ~1 \leq k, \ell \leq s,
\end{equation}
so that it preserves quadratic invariants. Then the Lawson method \eqref{LawsonPsi2}-\eqref{LawsonPsi3} preserves the $L^{2}$-norm:
\begin{equation*}
\|\psi_{n}\|_{L^{2}(\T_\delta^d)}=\|\psi_{0}\|_{L^{2}(\T_\delta^d)},
\quad \forall ~n \geq 0.
\end{equation*} 
\end{theorem}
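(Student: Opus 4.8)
The plan is to reduce the preservation of the $L^2$-norm by the Lawson method to the classical fact that a Runge--Kutta method satisfying the Cooper condition \eqref{CondCooper} preserves quadratic invariants of an ODE $\dot u = f(t,u)$, provided the quantity in question is genuinely invariant along the exact flow. So the first step is to work with the Lawson-transformed equation \eqref{Cauchy2}, $\partial_t u = g(t,u)$ with $g(t,u) = e^{-Lt}N_w(t,e^{Lt}u)$, and observe that the underlying Runge--Kutta method applied to \eqref{Cauchy2} produces exactly the iterates $u_n$ of \eqref{Lawsonv}--\eqref{Lawsonv2}, which are related to the $\psi_n$ by \eqref{LawsonPsi1}. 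Since $e^{tL}$ is an isometry on $L^2(\T_\delta^d)$ (as noted after Remark~\ref{rmkchoicecauchypb}, the operators $e^{\alpha L}$ are isometric on every $H^\sigma$, in particular on $H^0 = L^2$), we have $\|\psi_n\|_{L^2} = \|e^{t_n L} u_n\|_{L^2} = \|u_n\|_{L^2}$. Hence it suffices to show $\|u_n\|_{L^2} = \|u_0\|_{L^2} = \|\psi_0\|_{L^2}$ for all $n$.

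Next I would verify that the $L^2$-norm squared is a quadratic invariant of \eqref{Cauchy2}, i.e. that $\frac{d}{dt}\|u(t)\|_{L^2}^2 = 2\,\mathrm{Re}\langle u(t), g(t,u(t))\rangle_{L^2} = 0$ along solutions. Using the isometry of $e^{tL}$ and its skew-adjointness ($L^\star = -L$, so $e^{-tL}$ is the adjoint of $e^{tL}$), one computes $\mathrm{Re}\langle u, e^{-Lt}N_w(t,e^{Lt}u)\rangle_{L^2} = \mathrm{Re}\langle e^{Lt}u, N_w(t,e^{Lt}u)\rangle_{L^2}$, so setting $\psi = e^{Lt}u$ this reduces to checking $\mathrm{Re}\langle \psi, N_w(t,\psi)\rangle_{L^2} = 0$. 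From \eqref{eq:defNper}, $N_w(t,\psi) = -iw(t,\cdot)\psi - i\beta|\psi|^{2\kappa}\psi$, and since $w$ is real-valued and $|\psi|^{2\kappa}$ is real and nonnegative, $\langle \psi, N_w(t,\psi)\rangle_{L^2} = -i\int_{\T_\delta^d}(w + \beta|\psi|^{2\kappa})|\psi|^2\,d\mathbf{x}$ is purely imaginary, so its real part vanishes. Thus $t\mapsto\|u(t)\|_{L^2}^2$ is conserved by the exact flow of \eqref{Cauchy2}; this is the ``mass conservation'' already alluded to in the introduction, now phrased for the transformed equation.

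The main step — and the one requiring the most care — is the discrete counterpart: that the Runge--Kutta iteration \eqref{Lawsonv}--\eqref{Lawsonv2} preserves $\|u_n\|_{L^2}^2$. I would adapt the classical argument of Cooper (see also the proof in \cite{HLW} for quadratic invariants): writing $U_k := e^{-(t_n+c_k h)L}N_w(t_n+c_k h, e^{(t_n+c_k h)L}u_{n,k})$ for the stage derivatives, one expands
\[
  \|u_{n+1}\|_{L^2}^2 - \|u_n\|_{L^2}^2
  = 2h\sum_{k=1}^s b_k\,\mathrm{Re}\langle u_n, U_k\rangle
  + h^2\sum_{k,\ell=1}^s b_k b_\ell\,\mathrm{Re}\langle U_k, U_\ell\rangle,
\]
then substitutes $u_n = u_{n,k} - h\sum_\ell a_{k,\ell}U_\ell$ from \eqref{Lawsonv} into the first sum and regroups, so that the right-hand side becomes $2h\sum_k b_k\,\mathrm{Re}\langle u_{n,k}, U_k\rangle + h^2\sum_{k,\ell}(b_k b_\ell - b_k a_{k,\ell} - b_\ell a_{\ell,k})\,\mathrm{Re}\langle U_k, U_\ell\rangle$. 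The Cooper condition \eqref{CondCooper} kills the $O(h^2)$ term exactly. It remains to show each $\mathrm{Re}\langle u_{n,k}, U_k\rangle = 0$: by the same isometry/skew-adjointness manipulation as above, $\mathrm{Re}\langle u_{n,k}, e^{-(t_n+c_k h)L}N_w(t_n+c_k h, e^{(t_n+c_k h)L}u_{n,k})\rangle = \mathrm{Re}\langle \psi_{n,k}, N_w(t_n+c_k h,\psi_{n,k})\rangle = 0$ with $\psi_{n,k} = e^{(t_n+c_k h)L}u_{n,k}$, using once more that $w$ and $|\psi_{n,k}|^{2\kappa}$ are real. Hence $\|u_{n+1}\|_{L^2} = \|u_n\|_{L^2}$, and by induction $\|\psi_n\|_{L^2} = \|\psi_0\|_{L^2}$ for all $n\ge 0$. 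The only subtlety worth flagging is that the stage values $u_{n,k}$ must genuinely exist — but this is guaranteed by the well-posedness of \eqref{LawsonPsi2}--\eqref{LawsonPsi3} for $h$ small, noted after \eqref{LawsonPsi3} via Lemma~\ref{lemmaLipschitz} — and that all inner products are the complex $L^2$ inner product, so one must consistently take real parts; no genuine obstacle arises beyond bookkeeping.
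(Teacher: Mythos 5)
Your proof is correct and follows essentially the same route as the paper: pass to the Lawson-transformed equation \eqref{Cauchy2}, observe that the $L^2$-norm is a quadratic invariant of it, apply the Cooper condition \eqref{CondCooper}, and use the isometry of $e^{tL}$ to transfer the conclusion back to $\psi_n$. The only difference is one of detail: you reprove the classical fact that Runge--Kutta methods satisfying the Cooper condition preserve quadratic invariants (including the pointwise identity $\mathrm{Re}\langle \psi, N_w(t,\psi)\rangle_{L^2}=0$ that this fact actually requires), whereas the paper simply cites it.
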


\begin{proof}
Since the evolution equation \eqref{Cauchy1} preserves the $L^{2}$-norm, and so does the change of variables \eqref{Lawsontransf}, the evolution equation \eqref{Cauchy2} also preserves the $L^{2}$-norm. Therefore $t \mapsto \int_{\T_\delta^d} |\psi(t,\mathbf{x})|^{2} {\rm d} \mathbf{x}$ is a quadratic invariant of equation \eqref{Cauchy2}. The Cooper condition guarantees that the Runge--Kutta method \eqref{Lawsonv}-\eqref{Lawsonv2} preserves quadratic invariants. Hence for all $n$, $\int_{\T_\delta^d} |v_{n}(\mathbf{x})|^{2} {\rm d} \mathbf{x}=\int_{\T_\delta^d} |v_{0}(\mathbf{x})|^{2} {\rm d} \mathbf{x}$ and then $\int_{\T_\delta^d} |\psi_{n}(\mathbf{x})|^{2} {\rm d} \mathbf{x}=\int_{\T_\delta^d} |\psi_{0}(\mathbf{x})|^{2} {\rm d} \mathbf{x}$.
\end{proof}

\begin{definition}
\label{DefGaussLawson}
We call Gauss-Lawson method any Lawson method of the form \eqref{LawsonPsi2}-\eqref{LawsonPsi3} such that the underlying Runge--Kutta method is a Gauss collocation method (see section II.1.3 in \cite{HLW}). 
\end{definition}

\begin{corollary}
A Gauss-Lawson method applied to \eqref{Cauchy1} is symmetric and preserves the $L^{2}$-norm.
\end{corollary}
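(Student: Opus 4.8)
The plan is to show that the underlying Runge--Kutta method of a Gauss-Lawson method (Definition~\ref{DefGaussLawson}) satisfies the three algebraic hypotheses that occur in Theorems~\ref{thsym} and~\ref{quadinv}, namely the order-at-least-one condition~\eqref{condRK1}, the symmetry condition~\eqref{Condsymetrie}, and the Cooper condition~\eqref{CondCooper}. The corollary then follows immediately: \eqref{condRK1} together with \eqref{Condsymetrie} give symmetry of the Lawson method via Theorem~\ref{thsym}, while \eqref{condRK1} together with \eqref{CondCooper} give preservation of the $L^2(\T_\delta^d)$-norm via Theorem~\ref{quadinv}.

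First I would recall the classical structure of the $s$-stage Gauss collocation method: its order is $2s\geq 1$, so \eqref{condRK1} holds; its nodes are symmetric about the midpoint of $[0,1]$, i.e. $c_{s+1-k}=1-c_k$, and its weights satisfy $b_k=b_{s+1-k}$. For a collocation method one has $a_{k,\ell}=\int_0^{c_k}\mathcal L_\ell(\tau)\,{\rm d}\tau$ and $b_\ell=\int_0^1\mathcal L_\ell(\tau)\,{\rm d}\tau$, where $\mathcal L_\ell$ is the Lagrange polynomial of \eqref{Lagrange} taken with $h=1$. To derive \eqref{Condsymetrie} I would perform the substitution $\tau\mapsto 1-\tau$ in $a_{s+1-k,s+1-\ell}=\int_0^{1-c_k}\mathcal L_{s+1-\ell}(\tau)\,{\rm d}\tau$; the node symmetry makes $\tau\mapsto\mathcal L_{s+1-\ell}(1-\tau)$ a degree-$(s-1)$ polynomial equal to $\delta_{j\ell}$ at $\tau=c_j$, hence equal to $\mathcal L_\ell$, so that $a_{s+1-k,s+1-\ell}=\int_{c_k}^1\mathcal L_\ell(\tau)\,{\rm d}\tau=b_\ell-a_{k,\ell}$, which is precisely \eqref{Condsymetrie}. (Alternatively one may simply quote that Gauss methods are symmetric, see the theorem cited as Theorem~2.3 in \cite{HLW}, and recall that for collocation methods symmetry is equivalent to \eqref{Condsymetrie}.)

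Next I would verify the Cooper condition \eqref{CondCooper}, which is the classical fact that Gauss collocation methods preserve all quadratic invariants (and are symplectic); see Section~II.1.3 and Chapter~IV in \cite{HLW}. A self-contained check uses the polynomials $\mathcal P_k(\tau):=\int_0^\tau\mathcal L_k$, which satisfy $\mathcal P_k(c_m)=a_{m,k}$ and $\mathcal P_k(1)=b_k$. Integrating $(\mathcal P_k\mathcal P_\ell)'=\mathcal P_k\mathcal L_\ell+\mathcal P_\ell\mathcal L_k$ over $[0,1]$ gives $b_kb_\ell=\int_0^1(\mathcal P_k\mathcal L_\ell+\mathcal P_\ell\mathcal L_k)\,{\rm d}\tau$; the integrand has degree $\leq 2s-1$, so the $s$-point Gauss quadrature with weights $b_m$ and nodes $c_m$ is exact, and using $\mathcal L_\ell(c_m)=\delta_{m\ell}$ one gets $b_kb_\ell=b_\ell a_{\ell,k}+b_k a_{k,\ell}$, which is \eqref{CondCooper}.

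With \eqref{condRK1}, \eqref{Condsymetrie} and \eqref{CondCooper} established, Theorem~\ref{thsym} shows that the Gauss-Lawson method \eqref{LawsonPsi2}-\eqref{LawsonPsi3} applied to \eqref{Cauchy1} is symmetric, and Theorem~\ref{quadinv} shows that it preserves the $L^2(\T_\delta^d)$-norm; this proves the corollary. There is no real obstacle here; the only mildly delicate points, and hence the \emph{hard part}, are the bookkeeping of the node/weight symmetry in the derivation of \eqref{Condsymetrie} and the exactness-of-Gauss-quadrature argument for \eqref{CondCooper}, both of which are standard and could equally well be imported verbatim from \cite{HLW}.
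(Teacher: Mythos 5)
Your proposal is correct and follows essentially the same route as the paper: both reduce the corollary to checking that the underlying Gauss collocation method satisfies \eqref{condRK1}, \eqref{Condsymetrie} and \eqref{CondCooper}, and then invoke Theorems \ref{thsym} and \ref{quadinv}. The only difference is that you spell out the node-symmetry and Gauss-quadrature verifications explicitly, where the paper simply cites the corresponding classical results in \cite{HLW}; your computations are accurate.
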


\begin{proof}
The underlying Runge--Kutta method is a collocation method at Gauss points and therefore it is symmetric (see Corollary 2.2, Chapter V in \cite{HLW}). Hence the Gauss-Lawson method applied to \eqref{Cauchy1} is symmetric by Theorem \ref{thsym}.  Moreover the underlying Runge--Kutta method satisfies the Cooper condition \eqref{CondCooper} (see exercise 5 in chapter IV in \cite{HLW}). Therefore the Gauss-Lawson method preserves the $L^{2}$-norm by \eqref{quadinv}.
\end{proof}

We now want to prove that the Gauss-Lawson method with $s$ stages applied
to \eqref{Cauchy1} has order $2s$ in $H^\sigma(\T_\delta^d)$ for $\sigma>d/2$
(see Theorem \ref{th:ordreGaussLawson}).
Our strategy is the following.
First, we consider an equivalent autonomous form of \eqref{Cauchy1}.
Second,we show that applying a Lawson method to this autonomous form
is essentially the same as applying the method to
\eqref{Cauchy1} directly.
Third, we rely on an Alekseev-Gr\"obner lemma for autonomous systems
which provides a representation of the error that allows {\color{black}us} to conclude that,
since Gauss-Lawson methods are collocation methods,
they have order $2s$.

Let us set
\begin{equation*}
  U(t)=
  \begin{pmatrix}
    t\\
    u(t)
  \end{pmatrix}
  \in\R\times H^\sigma(\T_\delta^d),
\end{equation*}
so that \eqref{Cauchy1} reads
\begin{equation}
\label{eq:autonomous}
  \frac{{\rm d}}{{\rm d} t}
  U (t) = 
  F(U(t)),
\end{equation}
with
\begin{equation}
\label{eq:defF}
  F
  \begin{pmatrix}
    t\\
    u
  \end{pmatrix}
  =
    \begin{pmatrix}
      1\\
      e^{-tL}N_{w}(t,e^{tL}u)
    \end{pmatrix}.
\end{equation}
We have the following useful
\begin{lemma}
\label{uU}
  Let us fix $u_0\in H^\sigma(\T_\delta^d)$. For all $h>0$ sufficiently small,
we apply the Lawson method \eqref{Lawsonv}-\eqref{Lawsonv2} and denote
by $u_n$ the corresponding numerical values. Similarly,
we start with $U_0=(0,u_0)^t$, we apply the same method to the Cauchy problem
\eqref{eq:autonomous} with intial datum $U(0)=U_0$ and we denote by
$U_n$ the corresponding numerical values. We have for all $n\in\N$ such that
$nh\leq T$,
\begin{equation*}
  U_n=
  \begin{pmatrix}
    nh\\
    u_n
  \end{pmatrix}.
\end{equation*}
\end{lemma}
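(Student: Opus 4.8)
The plan is to prove the lemma by induction on $n$, showing that one step of the Lawson method applied to the autonomous system \eqref{eq:autonomous} reproduces exactly one step of the Lawson method \eqref{Lawsonv}-\eqref{Lawsonv2} for $u$, while the first component simply advances by $h$. The key observation is that the Lawson transformation for the autonomous system is governed by the linear operator $\mathcal{L} = \mathrm{diag}(0, L)$ acting on $\R\times H^\sigma(\T_\delta^d)$, so that $e^{t\mathcal{L}}(s, u)^t = (s, e^{tL}u)^t$ leaves the first (``time'') component untouched. Since $F$ in \eqref{eq:defF} is already written in a form where the nonautonomous part has been absorbed into the second component, applying the generalized Runge--Kutta (Lawson) method to \eqref{eq:autonomous} decouples.

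First I would write out explicitly the Lawson method \eqref{Lawsonv}-\eqref{Lawsonv2} applied to \eqref{eq:autonomous}. Writing $U_n = (\tau_n, u_n)^t$ and the internal stages $U_{n,k} = (\tau_{n,k}, u_{n,k})^t$, the Lawson transformation $e^{-(t_n+c_\ell h)\mathcal L}$ acts as the identity on the first coordinate. The first component of the stage equations then reads $\tau_{n,k} = \tau_n + h\sum_\ell a_{k,\ell}\cdot 1 = \tau_n + c_k h$ by \eqref{defci}, and the first component of the update reads $\tau_{n+1} = \tau_n + h\sum_k b_k = \tau_n + h$ by \eqref{condRK1}. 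This already gives, by induction with $\tau_0 = 0$, that $\tau_n = nh = t_n$ and $\tau_{n,k} = t_n + c_k h$.

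Next I would substitute these exact values of the first components into the second component of the stage and update equations. Because $e^{-(t_n+c_\ell h)\mathcal L}$ acts on the second slot exactly as $e^{-(t_n+c_\ell h)L}$, and because $F$ evaluated at $(t_n+c_\ell h, u_{n,\ell})^t$ has second component $e^{-(t_n+c_\ell h)L}N_w(t_n+c_\ell h, e^{(t_n+c_\ell h)L}u_{n,\ell})$, the second component of the stage equations becomes precisely \eqref{Lawsonv} and the second component of the update becomes precisely \eqref{Lawsonv2}. Since both systems are uniquely solvable for $h$ small enough (by Lemma \ref{lemmaLipschitz} and a fixed-point argument, exactly as already noted after \eqref{LawsonPsi3}), the $u_n$ produced by the two procedures coincide. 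This closes the induction.

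The only mild subtlety---and the step I would be most careful about---is the bookkeeping that the internal-stage first components are genuinely forced to be $t_n + c_k h$ rather than merely consistent with it: this requires that the fixed-point iteration defining the stages is performed jointly on $(\tau_{n,k}, u_{n,k})$, and one must observe that the $\tau$-equations are linear and decoupled from the $u$-equations, so they are solved exactly by $\tau_{n,k} = t_n + c_k h$ independently of the $u$-iteration. Once this is noted, everything else is a direct term-by-term comparison of \eqref{Lawsonv}-\eqref{Lawsonv2} with its specialization, and no real estimate is needed beyond the well-posedness already invoked in the text.
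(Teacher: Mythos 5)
Your proposal is correct and follows essentially the same route as the paper's proof: an induction in which the first (time) components of the stages and of the update are forced to equal $t_n+c_kh$ and $(n+1)h$ by \eqref{defci} and \eqref{condRK1}, after which the second components reproduce exactly \eqref{Lawsonv}--\eqref{Lawsonv2}, with uniqueness of the stage solutions for small $h$ coming from the Lipschitz property of Lemma \ref{lemmaLipschitz}. The only cosmetic difference is that you phrase the triviality of the time component via the block operator $\mathrm{diag}(0,L)$ acting as the identity on the first slot, while the paper simply uses that the first component of $F$ in \eqref{eq:defF} is identically $1$; the computation is the same.
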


\begin{proof}
  Since the function $N_w$ satisfies a local Lipschitz condition
(see Lemma \ref{lemmaLipschitz}), so does $F$ (see \eqref{eq:defF}).
Hence, the Lawson methods are well defined locally for $h>0$ small enough.
We perform the proof by induction. The relation is true for $n=0$.
Assume it holds for some $n\in\N$ such that $(n+1)h\leq T$.
The definition of the coefficients $(c_k)_{1\leq k\leq s}$ (see \eqref{defci})
and the first component of the function $F$
ensure that for all $k\in\{1,\dots,s\}$, the first coefficient of $U_{n,k}$
is $t_{n,k}=nh+c_k h$. Therefore, in view of \eqref{Lawsonv},
the $(U_{n,k})_{1\leq k\leq s}=(nh+c_k h,u_{n,k})^t$  are the unique local
solutions for the Lawson inner problem in $U$.
Similarly, the consistency relation \eqref{condRK1} ensures that
the first component of $U_{n+1}$ is $(n+1)h$. Hence, in view of \eqref{Lawsonv2},
we have that the second component of $U_{n+1}$ is actually $u_{n+1}$.
\end{proof}

We are now able to state
\begin{theorem}
\label{th:ordreGaussLawson}
  Assume $u_0\in H^\sigma(\T_\delta^d)$ {\color{black} is given}
and $T>0$ {\color{black}is chosen such that the exact solution
$t\mapsto u(t)$ of the Cauchy problem \eqref{Cauchy2} is defined over $[0,T]$}.
There exist constants $C>0$ and $h_0>0$ such that
the corresponding numerical approximations provided by the Gauss-Lawson
method with $s$ stages $u_n$ satisfy
\begin{equation*}
  \forall h\in(0,h_0),\ \forall n\in\N\text{ s.t. } 0\leq nh\leq T,\qquad
  \|u(t_n)-u_n\|_{H^\sigma(\T_\delta^d)} \leq C h^{2 s}.
\end{equation*}
\end{theorem}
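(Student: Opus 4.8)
The plan is to carry out the three-step programme announced just before the statement. The first step is already in place: by Lemma~\ref{uU}, applying the Gauss--Lawson method \eqref{Lawsonv}-\eqref{Lawsonv2} to \eqref{Cauchy2} with initial datum $u_0$ produces exactly the $u$-component of the same Runge--Kutta scheme applied to the autonomous Cauchy problem \eqref{eq:autonomous}-\eqref{eq:defF} started from $U_0=(0,u_0)^t$. Since the underlying Runge--Kutta method is the $s$-stage Gauss collocation method, this lifted scheme is nothing but the Gauss collocation method for \eqref{eq:autonomous}: for $h>0$ small enough (the local Lipschitz property of $F$ being inherited from Lemma~\ref{lemmaLipschitz}), there is on each $[t_n,t_{n+1}]$ a unique polynomial $\Pi_n$ of degree $\leq s$ with values in $\R\times H^\sigma(\T_\delta^d)$ such that $\Pi_n(t_n)=U_n$, $\Pi_n'(t_n+c_ih)=F(\Pi_n(t_n+c_ih))$ for $1\leq i\leq s$, and $U_{n+1}=\Pi_n(t_{n+1})$. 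It therefore suffices to prove that this collocation method has order $2s$ in $\R\times H^\sigma(\T_\delta^d)$, and then to read off the bound on $u_n$ from the second component.

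To estimate the one-step error I would use the Alekseev--Gr\"obner formula. Setting $\Delta_n(t):=\Pi_n'(t)-F(\Pi_n(t))$, which vanishes at the $s$ Gauss nodes $t_n+c_ih$, and denoting by $\Phi^\tau$ the local flow of \eqref{eq:autonomous} and by $\partial\Phi^\tau$ its derivative with respect to the initial value (the solution of the variational equation along the relevant trajectory), the nonlinear variation-of-constants formula yields the exact representation
\begin{equation*}
  U_{n+1}-\Phi^h(U_n)=\int_0^h \partial\Phi^{h-\tau}\!\big(\Pi_n(t_n+\tau)\big)\,\Delta_n(t_n+\tau)\,{\rm d}\tau .
\end{equation*}
Because the integrand vanishes at the $s$ Gauss nodes of $[0,h]$ and $s$-point Gauss quadrature integrates polynomials of degree $\leq 2s-1$ exactly, a Peano-kernel / Taylor-expansion argument in the Banach space --- as in the classical finite-dimensional proof, see e.g.\ Section~II.1.3 in \cite{HLW} --- bounds this integral by $Ch^{2s+1}$, provided the integrand is controlled in $C^{2s}([0,h];\R\times H^\sigma(\T_\delta^d))$ uniformly in $n$. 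This gives a local error of size $O(h^{2s+1})$.

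For the global error I would propagate the local errors. Using once more Lemma~\ref{lemmaLipschitz}, the fact that the operators $e^{\tau L}$ are isometries of $H^\sigma(\T_\delta^d)$, and the boundedness of the stage map for $h$ small, one obtains a uniform Lipschitz (stability) bound for the one-step map on a tube around the exact trajectory; a standard Lady-Windermere telescoping then accumulates the $O(1/h)$ local errors of size $O(h^{2s+1})$ into $\|U(t_n)-U_n\|_{\R\times H^\sigma(\T_\delta^d)}\leq Ch^{2s}$. Taking second components and invoking Lemma~\ref{uU} yields $\|u(t_n)-u_n\|_{H^\sigma(\T_\delta^d)}\leq Ch^{2s}$, which is the claim.

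The main obstacle is that $F$ contains the propagators $e^{\pm tL}$ with $L=i\Delta/2$: these are isometries of $H^\sigma(\T_\delta^d)$ but do \emph{not} depend smoothly on $t$ as operator families on $H^\sigma(\T_\delta^d)$, since each time derivative costs two spatial derivatives. Hence $F$ fails to be $C^{2s}$ on $\R\times H^\sigma(\T_\delta^d)$ in the naive sense and the classical collocation theorem cannot be quoted verbatim. The technical heart of the proof is therefore to show that, along the exact solution $t\mapsto u(t)=e^{-tL}\psi(t)$ with $\psi$ solving the semilinear equation \eqref{Cauchy1}, the defect $\Delta_n$, the variational flow $\partial\Phi^{h-\tau}$ and their product remain bounded in $H^\sigma(\T_\delta^d)$ with the $\tau$-regularity required for the Gauss quadrature estimate. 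This relies on persistence of Sobolev regularity for \eqref{Cauchy1} (which provides the needed time regularity of $\psi$, hence of $u$, when the data is taken smoother than merely $H^\sigma$) together with the fact that the leading-order contributions generated when one differentiates $e^{-tL}N_w(t,e^{tL}\,\cdot\,)$ in time are largely compensated along solutions of \eqref{Cauchy1}. Once this bookkeeping is secured, the remainder follows the finite-dimensional collocation argument line by line.
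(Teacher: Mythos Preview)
Your approach matches the paper's almost exactly: reduction to the autonomous problem \eqref{eq:autonomous} via Lemma~\ref{uU}, interpretation of the Gauss--Lawson step as Gauss collocation, the Alekseev--Gr\"obner representation of the local error, the Gauss-quadrature argument for an $O(h^{2s+1})$ local error, and a discrete Gronwall/Lady Windermere argument for the global $O(h^{2s})$ bound.

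The single point of divergence is the regularity of $F$. The paper does not treat this as an obstacle at all: it simply asserts that, under the standing hypotheses $\sigma>d/2$ and $\kappa\in\N$, the vector field $F$ of \eqref{eq:defF} lies in $\mathcal C^{\infty}(\R\times H^\sigma(\T_\delta^d),\R\times H^\sigma(\T_\delta^d))$, so that \eqref{eq:autonomous} is a genuine smooth autonomous ODE in a Banach space and the classical collocation proof (as in \cite{HNW93,Is2009}, with the Alekseev--Gr\"obner lemma taken from \cite{DeTa2013}) applies verbatim; in particular the variational equation $Y'=F'(U(t))Y$ has a smooth coefficient and the defect bounds follow from Lemma~1.6, Chapter~2 of \cite{HLW}. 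You instead flag the $t$-smoothness of $e^{\pm tL}$ as the main obstacle and propose to recover the needed estimates via persistence of Sobolev regularity and cancellations along the exact solution. So the paper sidesteps the very issue you spend your last paragraph on by declaring it a non-issue; your additional layer of caution and the sketched workaround are not present in the paper's argument.
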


{\color{black}
\begin{remark}
This implies that the exact solution $\psi$ of \eqref{Cauchy1} also satisfies
\begin{equation*}
  \forall h\in(0,h_0),\ \forall n\in\N\text{ s.t. } 0\leq nh\leq T,\qquad
  \|\psi(t_n)-\psi_n\|_{H^\sigma(\T_\delta^d)} \leq C h^{2 s},
\end{equation*}
where $\psi_n$ are the numerical approximations of $\psi$ by the Gauss-Lawson method \eqref{LawsonPsi2}-\eqref{LawsonPsi3}.
\end{remark}
}

{\color{black}
\begin{remark}
As mentioned in the introduction, the temporal constant $C$ in
Theorem \ref{th:ordreGaussLawson} is \emph{not} uniform in the truncation
parameter $\delta$.
\end{remark}
}

\begin{proof}
  Thanks to Lemma \ref{uU}, it is sufficient to prove
that the Gauss-Lawson method applied to the Cauchy problem
\eqref{eq:autonomous} with initial datum $U_0=(0,u_0)$ is of order $2s$.
We chose $h_0>0$ sufficiently small to ensure that the method is
well-defined for all $h\in(0,h_0)$.
We follow the proof of {\color{black}\cite{HNW93} and} \cite{Is2009}
for collocation methods applied to ODEs.
{\color{black} Indeed, with our hypotheses ($\sigma>d/2, \kappa\in\N$)
the vector field $F$ defined in \eqref{eq:defF}
is
$\mathcal C^{\infty}(\R\times H^\sigma(\T_\delta^d),\R\times H^\sigma(\T_\delta^d))$
and we are dealing with an autonomous ODE in the Banach space
$\R\times H^\sigma(\T_\delta^d)$ with smooth vector field.
Hence, solutions of \eqref{eq:autonomous} are smooth functions with values
in $\R\times H^\sigma(\T_\delta^d)$.
Therefore, the variational evolution equation for
$Y=\frac{\partial U}{\partial U_0}$:
\begin{equation*}
  Y'(t)=F'(U(t))Y(t),
\end{equation*}
with $Y(0)=(1,{\rm Id_{H^\sigma(\T_\delta^d)}})$ has a right-hand side linear
operator $t\mapsto F'(U(t)) \in \mathcal C^\infty(I,\mathcal L(\R\times H^\sigma(\T^d_\delta)))$, where $I$ is an interval where $U$ is defined and $\mathcal L(\R\times H^\sigma(\T^d_\delta))$ stands for the space of linear continuous operators
of $\R\times H^\sigma(\T^d_\delta)$.
}
First, we recall that the mapping $U_n\mapsto U_{n+1}$ can be defined
through a collocation problem. The same problem allows {\color{black}us}
to express
the consistency error: find $P$ a polynomial of degree at most $s$
(with coefficients in $\R\times H^\sigma(\T_\delta^d)$) such that
\begin{equation*}
 \left\{
  \begin{array}{l}
  P(t_n)=U(t_n), \\ 
  P'(t_n+c_k h) = F(P(t_n+c_kh)), \qquad \qquad \text{for } 1\leq k \leq s,
  \end{array}
  \right.
\end{equation*}
and set $\tilde U_{n+1}=P(t_{n+1})$, so that the consistency error
reads $\tilde U_{n+1}-U(t_{n+1})$.
Second, define the {\color{black} defect} $\tilde d$ as
\begin{equation*}
  \tilde d(t,P(t))=P'(t)-F(P(t)).
\end{equation*}
Then, we use the Alekseev-Gr\"obner lemma (Theorem 2 of \cite{DeTa2013}),
which ensures that there exists
a smooth function $g$ (corresponding to $\partial_2{\mathcal E}_G$
in \cite{DeTa2013}) from $[0,T]\times(\R\times H^\sigma(\T_\delta^d))$ to
the linear continuous operators of $\R\times H^\sigma(\T_\delta^d)$ such that
the following identity holds in $H^\sigma(\T_\delta^d)$: for all $t\in [0,h]$,
\begin{equation*}
  P(t_n+t)-U(t_n+t) = \int_{t_n}^{t_n+t} g(t_n+t-\tau,P(\tau)) \tilde
d(\tau,P(\tau))
{\rm d} \tau.
\end{equation*}
Setting $t=h$ in this formula and using the fact that $g$ is smooth
and all the derivatives of $t\mapsto \tilde d(t,P(t))$ are bounded on $[0,h]$
independently of $h$ (see Lemma 1.6 of Chapter 2 in \cite{HLW}), we infer
that the consistency error in $H^\sigma(\T_\delta^d)$ is of order $h^{2s+1}$.
Hence the global error is of order $h^{2s}$ via a classical discrete Gronwall
lemma.
\end{proof}

\section{Numerical experiments}
\label{Comparaisons}

In this section, we make some numerical experiments and show the numerical
efficiency of the methods.
First, we consider one dimensional problems and compare some ERK methods
and Gauss-Lawson methods with the classical splitting methods.
Second, we compare the same kinds of methods applied to an actual 2D problem
with a rotating BEC. 
{\color{black}As expected in the theoretical results above 
, the numerical behavior does not actually depend much on the dimension as is illustrated in subsections \ref{Comp1d} and \ref{Comp2d}.} 

\subsection{One dimensional example: \textcolor{black}{the one-dimensional cubic NLS equation}}
\label{Comp1d}

We provide in this subsection some numerical experiments to show the efficiency
of the Gauss-ERK and the Gauss-Lawson methods compared with the traditional
splitting methods.
We use here splitting methods respectively of order 1, 2, 4 and 6
\cite{blanes_2002}.
In order to present the usual splitting schemes, we have to define the
operators $S_L$ and $S_{N_w}$ associated respectively to the evolution of the
{\color{black}equations}
$$
\partial_t u(t,\mathbf{x})=Lu(t,\mathbf{x}) \quad \partial_t v(t,\mathbf{x})= N_w v(t,\mathbf{x}),
$$
where $L=i\Delta/2$ and $N_w$ is defined in \eqref{eq:defNper}. The operators
satisfy by definition the following relations involving the exact solutions of the associated {\color{black}equations}:
\begin{equation*} 
u(t+h,\mathbf{x})=S_L(h) u(t,\mathbf{x})
\qquad {\rm and}\qquad
v(t+h,\mathbf{x})=S_{N_w}(h) v(t,\mathbf{x}).
\end{equation*}
As explained for example in \cite{blanes_2002},
a splitting idea for building a splitting scheme of even order consists
in approximating the continous flow associated to
$\partial_t \zeta(t,\mathbf{x})=L\zeta(t,\mathbf{x})+N_w(t,\zeta(t,\mathbf{x}))$
by a composition of operators $S_L$ and $S_{N_w}$ of the form
\begin{equation*}
S_{N_w}(a_1 h)S_L(b_1 h) \cdots S_{N_w}(a_r h)S_L(b_r h)S_{N_w}(a_{r+1}h),
\end{equation*}
with $r\geq 1$ and for all $\ell$,
$a_{r+2-\ell}=a_\ell$ and $b_{r+1-\ell}=b_\ell$.
The coefficients of the splitting methods we implemented
are given in Table \ref{tab:split}.
\begin{table}[!h]
  \centering
  \begin{tabular}{|l|l|l|}\hline
    Order 2   & Order 4                         & Order 6 \\ \hline
    $a_1=1/2$ & $a_1=\theta$                    &  $a_1 = 0.0502627644003922$      \\
    $b_1=1$   & $a_2=1/2-\theta$                &  $a_2 = 0.413514300428344$       \\
              & $b_1=2\theta$                   &  $a_3 = 0.0450798897943977$      \\
              & $b_2=1-4\theta $                &  $a_4 = - 0.188054853819569$     \\
              &  $\theta=(2+2^{1/3}+2^{-1/3})/6$ & $a_5 = 0.541960678450780$            \\
              &                                &   $a_6 =1-2(a_1+a_2+a_3+a_4+a_5)$ \\[1mm] 
& &   $b_1 = 0.148816447901042$      \\
&&    $b_2 = - 0.132385865767784$     \\
&&    $b_3 = 0.067307604692185$       \\
&&   $b_4 = 0.432666402578175$        \\
& &      $b_5 = 0.5 -(b_1+b_2+b_3+b_4)$\\ \hline
  \end{tabular}
  \caption{Coefficient of splitting methods.}
  \label{tab:split}
\end{table}
Let us mention that alternative ways of deriving splitting methods for such
problems exist, some of which authorize the use of complex coefficients. {\color{black}For example, one can use splitting methods such as (5.1) in \cite{CaChDeVi2009}, where only one set of coefficients has nonzero imaginary part and the other set can be used to solve the linear part of a Schr\"odinger equation (see also \cite{blanes_2013}).}

We compute the numerical solution to the one-dimensional cubic NLS equation
\begin{equation}
  \label{eq:schro_num}
  \partial_t \psi = i \partial_x^2 \psi + i q |\psi|^2 \psi, \quad (t,x) \in [0,T]\times \R.
\end{equation}
An exact solution for $(t,x) \in [0,T]\times\R$ is given by the soliton formula
\begin{equation}
\label{eq:psiex}
\psi_{ex}(t,x)=\frac{2a}{q} \text{sech} \big ( (x-x_0)-c t\big ) \exp \Big ( ic \frac{(x-x_0)-ct}{2} \Big ) \exp \Big ( i \big (a+\frac{c^2}{4} \big) t \Big ).
\end{equation}
For the numerical approximation of this solution, as explained in
subsection \ref{subsec:simplifiedsetting}, we take a periodic finite interval
$(x_\ell,x_r)$
of big enough length, and we discretize the space operators using Fourier
spectral approximation.
We choose the spatial mesh size $k=\Delta x >0$ with $k=(x_r-x_\ell)/M$ with $M=2^P$, $P\in \N^*$. The time step is denoted by $h=\Delta t$ and $h=T/N_T$ for some $N_T\in\N^\star$. The grid points and the discrete times are
$$
x_j:=x_\ell+jk, \quad t_n:=nh, \quad j=0,1,\cdots,M, \quad n=0,1,\cdots,N_T.
$$
Let $\psi_j^n$ be the approximation of $\psi(t_n,x_j)$. Since we discretize \eqref{eq:schro_num} by the Fourier spectral method, $\psi_j^n$ and
its Fourier transform satisfy the following relations:
$$
\psi_j^n = \frac{1}{M} \sum_{m=-M/2}^{M/2-1} \hat{\psi}_m^n e^{i\mu_m(x_j-x_\ell)}, \quad j=0,\cdots,M-1,
$$
and
$$
\hat{\psi}_m^n=\sum_{j=0}^{M-1} \psi_j^n e^{-i\mu_m(x_j-x_\ell)}, \quad m=-\frac{M}{2},\cdots,\frac{M}{2}-1,
$$
where $\mu_m=\frac{2\pi m}{x_r-x_\ell}$ for all $m=-\frac{M}{2},\cdots,\frac{M}{2}-1$.
Let us define the discrete gradient operator $\nabla_k$
$$
\widehat{(\nabla_k v)}_m = i \mu_m  \hat{v}_m, \quad v\in \C^M.
$$ 
Let us denote by $\Pi_k$ the projection operator
$$
\begin{array}{llcl}
\Pi_k: & \mathcal{C}^0([x_\ell,x_r],\C) & \to & \C^{M}\\
       & \varphi & \mapsto & \left (\varphi(x_j) \right )_{0\leq j \leq M-1}  
\end{array}.
$$
We define the discrete $\ell^r$ norm on $\C^M$ as
$$
\|v\|_{\ell^r} = \left (k \sum_{j=0}^{M-1} |v_j|^r\right)^{1/r}, \quad v\in \C^M, \ 
r\geq 1.
$$
Using these definitions, we consider the following errors:
\begin{equation}
  \label{eq:errphase}
  \mathcal{E}_{P,h}=\sup_{n\in\{0,\cdots,N\}} \left \| \Pi_k(\psi_{ex}(t_n,\cdot))-(\psi_j^n)_j \right \|_{\ell^2},
\end{equation}
\begin{equation}
  \label{eq:errmass}
  \mathcal{E}_{M,h}=\sup_{n\in\{0,\cdots,N\}} \left |\left \| \Pi_k(\psi_{ex}(t_n,\cdot))\right \|_{\ell^2}-\left \|(\psi_j^n)_j \right \|_{\ell^2}\right |/\left \| \Pi_k(\psi_{ex}(0,\cdot))\right \|_{\ell^2}.
\end{equation}
We also define the discrete energy:
$$
E_{k}(v)=\frac{1}{2}\|\nabla_k v\|_{\ell^2}^2-\frac{q}{4} {\|v\|_{\ell^4}^4},
$$
the energy conservation is seen through the following relative error
\begin{equation}
  \label{eq:errener}
  \mathcal{E}_{E,h}=\sup_{n\in\{0,\cdots,N\}} \left |E_k(\Pi_k(\psi_{ex}(t_n,\cdot)))-E_k((\psi_j^n)_j)\right |/E_k(\Pi_k(\psi_{ex}(0,\cdot))).  
\end{equation}

For all the following simulations, we consider the computational domain $(x_\ell,x_r)=(-15,15)$ and the final time $T=5$. The experiments are performed with the discretization parameters $P=10$ and various time steps $h$ and the chosen physical parameters are $q=8$, $a=q^2/16$ and $c=0.5$. 
We provide for all methods the evolution of $\mathcal{E}_{P,h}$, $\mathcal{E}_{M,h}$ and $\mathcal{E}_{E,h}$ for various time steps $h$. All of our methods being implicit, it is necessary to solve a nonlinear problem at each time step for both ERK and Lawson method. This is performed here through a fixed point algorithm. It is therefore important to show the efficiency of the schemes to plot the evolution of the CPU time with respect to the error $\mathcal{E}_{P,h}$. \\

The legends of the figures are respectively given in Figures \ref{fig:legenda} and \ref{fig:legendb} for ERK and Lawson methods (we recall that $s$ denotes the number of collocation points) and splitting schemes. We showed in the previous sections that if one uses arbitrary distinct collocation points, the order of ERK and Lawson methods are $h^s$. Therefore, in all figures, curves associated to $s=1$, $2$ and $4$ respectively correspond to splitting of order $1$, $2$ and $4$. If we would like to compare the splitting of order 6 to ERK and Lawson methods, we would have to consider $s=6$ collocation points.
\newcommand{\goodgap}{%
  \hspace{\subfigtopskip}%
  \hspace{\subfigbottomskip}}

\begin{figure}[!h]
  \centering
  \subfigure[ERK and Lawson methods]{\footnotesize 
%
%
\definecolor{mycolor1}{rgb}{0.00000,0.44700,0.74100}%
\definecolor{mycolor2}{rgb}{0.85000,0.32500,0.09800}%
\definecolor{mycolor3}{rgb}{0.92900,0.69400,0.12500}%
\definecolor{mycolor4}{rgb}{0.49400,0.18400,0.55600}%
\definecolor{mycolor5}{rgb}{0.46600,0.67400,0.18800}%
\begin{tikzpicture}

  \draw (0,0) rectangle (1.9,2.6);
  \draw[color=mycolor5,solid,mark=o] (0.1,0.3) -- (0.7,0.3);
  \draw[color=mycolor5] plot[mark=triangle,mark size=2pt] (0.4,0.3);
  \draw (0.8,0.3) node[right] {$s = 5$};

  \draw[color=mycolor4,solid,mark=o] (0.1,0.8) -- (0.7,0.8);
  \draw[color=mycolor4] plot[mark=diamond,mark size=2pt] (0.4,0.8);
  \draw (0.8,0.8) node[right] {$s = 4$};

  \draw[color=mycolor3,solid,mark=o] (0.1,1.3) -- (0.7,1.3);
  \draw[color=mycolor3] plot[mark=asterisk,mark size=2pt] (0.4,1.3);
  \draw (0.8,1.3) node[right] {$s = 3$};

  \draw[color=mycolor2,solid,mark=o] (0.1,1.8) -- (0.7,1.8);
  \draw[color=mycolor2] plot[mark=o,mark size=2pt] (0.4,1.8);
  \draw (0.8,1.8) node[right] {$s = 2$};

  \draw[color=mycolor1,solid,mark=+] (0.1,2.3) -- (0.7,2.3);
  \draw[color=mycolor1] plot[mark=+,mark size=2pt] (0.4,2.3);
  \draw (0.8,2.3) node[right] {$s = 1$};

\end{tikzpicture}
  \hspace*{1cm}
  \subfigure[Splitting schemes]{\footnotesize 
%
%
\definecolor{mycolor1}{rgb}{0.00000,0.44700,0.74100}%
\definecolor{mycolor2}{rgb}{0.85000,0.32500,0.09800}%
\definecolor{mycolor3}{rgb}{0.92900,0.69400,0.12500}%
\definecolor{mycolor4}{rgb}{0.49400,0.18400,0.55600}%
\definecolor{mycolor5}{rgb}{0.46600,0.67400,0.18800}%
\definecolor{mycolor6}{rgb}{0.30100,0.74500,0.93300}%
\begin{tikzpicture}

  \draw (0,1.04) rectangle (3.3,3.2);

  \draw[color=mycolor6,solid,mark=square] (0.1,1.3) -- (0.7,1.3);
  \draw[color=mycolor6] plot[mark=square,mark size=2pt] (0.4,1.3);
  \draw (0.8,1.3) node[right] {Splitting O$(6)$};

  \draw[color=mycolor4,solid,mark=diamond] (0.1,1.8) -- (0.7,1.8);
  \draw[color=mycolor4] plot[mark=diamond,mark size=2pt] (0.4,1.8);
  \draw (0.8,1.8) node[right] {Splitting O$(4)$};

  \draw[color=mycolor2,solid,mark=o] (0.1,2.3) -- (0.7,2.3);
  \draw[color=mycolor2] plot[mark=o,mark size=2pt] (0.4,2.3);
  \draw (0.8,2.3) node[right] {Splitting O$(2)$};

  \draw[color=mycolor1,solid,mark=+] (0.1,2.8) -- (0.7,2.8);
  \draw[color=mycolor1] plot[mark=+,mark size=2pt] (0.4,2.8);
  \draw (0.8,2.8) node[right] {Splitting O$(1)$};

\end{tikzpicture}
  \caption{Legends}
  \label{fig:legends}
\end{figure}
We gather the evolution of $\mathcal{E}_{P,h}$ for various schemes in Figure \ref{fig:EPh}. Note that since we use Gauss collocation points, we clearly obtain numerically an order $2s$ for both Gauss-ERK and Gauss-Lawson methods with $s$ stages. {\color{black}This is predicted} for Gauss-Lawson schemes thanks to Theorem \ref{th:ordreGaussLawson}. 
Let us remark that there is a saturation phenomenon {\color{black}for high $s$: the phase error $\mathcal{E}_{P,h}$ ceases to decrease when $h$ decreases. This is due to the fact that the exact solution $\psi_{ex}$ defined in \eqref{eq:psiex} does not satisfy exactly  the boundary conditions (see the transformation of the problem from $\R^d$ to $\T^d_\delta$ in section
\ref{subsec:simplifiedsetting}).
This can be reduced either by increasing the size
$\delta$ of the computational domain
or by changing the periodic boundary conditions to, for example,
homogeneous Dirichlet ones (in this case, one can replace FFT with
the discrete sine transform).}
If we compare Figures \ref{fig:EPhERK} and \ref{fig:EPhLaw}, it is noticeable that the constants of error are better for Gauss-ERK methods.

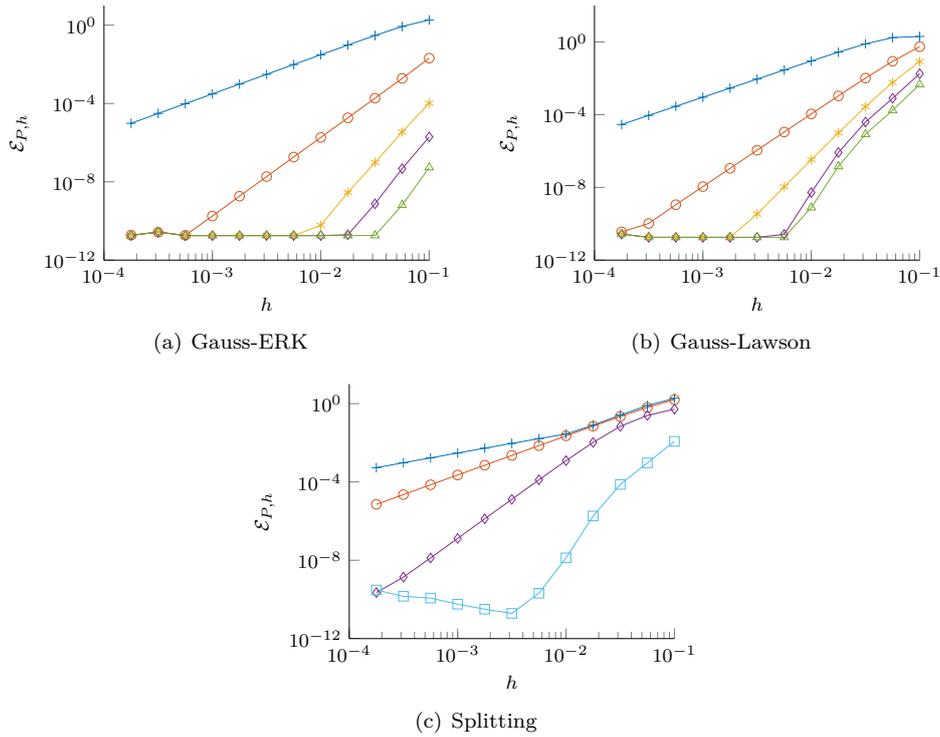
\begin{figure}[!h]
  \centering
  \subfigure[Gauss-ERK]{{\footnotesize 
%
%
\definecolor{mycolor1}{rgb}{0.00000,0.44700,0.74100}%
\definecolor{mycolor2}{rgb}{0.85000,0.32500,0.09800}%
\definecolor{mycolor3}{rgb}{0.92900,0.69400,0.12500}%
\definecolor{mycolor4}{rgb}{0.49400,0.18400,0.55600}%
\definecolor{mycolor5}{rgb}{0.46600,0.67400,0.18800}%
\begin{tikzpicture}[scale=0.9]

\begin{axis}[%
width=4.754601cm,
height=3.75cm,
at={(0cm,0cm)},
scale only axis,
xmode=log,
xmin=0.0001,
xmax=0.1,
xminorticks=true,
xlabel={$h$},
ymode=log,
ymin=1e-12,
ymax=10,
yminorticks=true,
ylabel={$\mathcal{E}_{P,h}$},
axis x line*=bottom,
axis y line*=left
] ()
\addplot [color=mycolor1,solid,mark=+,mark options={solid},forget plot]
  table[row sep=crcr]{%
0.1	1.84557116883949\\
0.0562341325190349	0.854659839575447\\
0.0316227766016838	0.296255732876074\\
0.0177827941003892	0.0961408191943907\\
0.01	0.0306557640598316\\
0.00562341325190349	0.00971569216073048\\
0.00316227766016838	0.00307488382984994\\
0.00177827941003892	0.000972785095467171\\
0.001	0.000307611457316651\\
0.000562341325190349	9.72730893833666e-05\\
0.000316227766016838	3.07612955269807e-05\\
0.000177827941003892	9.72781564205045e-06\\
};
\addplot [color=mycolor2,solid,mark=o,mark options={solid},forget plot]
  table[row sep=crcr]{%
0.1	0.0206578562543729\\
0.0562341325190349	0.0018965971253223\\
0.0316227766016838	0.000184889727845496\\
0.0177827941003892	1.84075541001901e-05\\
0.01	1.83966489515784e-06\\
0.00562341325190349	1.83875690309045e-07\\
0.00316227766016838	1.83838390994948e-08\\
0.00177827941003892	1.83455490448779e-09\\
0.001	1.78013931350978e-10\\
0.000562341325190349	1.84182054965474e-11\\
0.000316227766016838	2.62749707880861e-11\\
0.000177827941003892	1.87358119529605e-11\\
};
\addplot [color=mycolor3,solid,mark=asterisk,mark options={solid},forget plot]
  table[row sep=crcr]{%
0.1	0.000104113083490949\\
0.0562341325190349	3.46924799492737e-06\\
0.0316227766016838	9.75706314906392e-08\\
0.0177827941003892	2.81564462517891e-09\\
0.01	6.03476322824619e-11\\
0.00562341325190349	1.78668283334528e-11\\
0.00316227766016838	1.78793120866082e-11\\
0.00177827941003892	1.78817098234705e-11\\
0.001	1.78802214977198e-11\\
0.000562341325190349	1.78779835636396e-11\\
0.000316227766016838	2.80640001297142e-11\\
0.000177827941003892	1.87126209104355e-11\\
};
\addplot [color=mycolor4,solid,mark=diamond,mark options={solid},forget plot]
  table[row sep=crcr]{%
0.1	1.95975025292956e-06\\
0.0562341325190349	4.69043548935922e-08\\
0.0316227766016838	7.47730425142384e-10\\
0.0177827941003892	1.97998503867999e-11\\
0.01	1.7866008510553e-11\\
0.00562341325190349	1.78657825419057e-11\\
0.00316227766016838	1.78793582225984e-11\\
0.00177827941003892	1.78820304592522e-11\\
0.001	1.78806149103514e-11\\
0.000562341325190349	1.787798555604e-11\\
0.000316227766016838	2.72759731563889e-11\\
0.000177827941003892	1.82576059116287e-11\\
};
\addplot [color=mycolor5,solid,mark=triangle,mark options={solid},forget plot]
  table[row sep=crcr]{%
0.1	5.37233329755416e-08\\
0.0562341325190349	6.39038608078418e-10\\
0.0316227766016838	1.81089773077742e-11\\
0.0177827941003892	1.78653608612102e-11\\
0.01	1.7866527146341e-11\\
0.00562341325190349	1.78656282966088e-11\\
0.00316227766016838	1.78790630111594e-11\\
0.00177827941003892	1.78820063289106e-11\\
0.001	1.78803180285659e-11\\
0.000562341325190349	1.78779312278978e-11\\
0.000316227766016838	2.72448712009838e-11\\
0.000177827941003892	1.8150857704615e-11\\
};
\end{axis}
\end{tikzpicture}
\goodgap  \subfigure[Gauss-Lawson]{{\footnotesize
%
%
\definecolor{mycolor1}{rgb}{0.00000,0.44700,0.74100}%
\definecolor{mycolor2}{rgb}{0.85000,0.32500,0.09800}%
\definecolor{mycolor3}{rgb}{0.92900,0.69400,0.12500}%
\definecolor{mycolor4}{rgb}{0.49400,0.18400,0.55600}%
\definecolor{mycolor5}{rgb}{0.46600,0.67400,0.18800}%
\begin{tikzpicture}[scale=0.9]

\begin{axis}[%
width=4.754601cm,
height=3.75cm,
at={(0cm,0cm)},
scale only axis,
xmode=log,
xmin=0.0001,
xmax=0.1,
xminorticks=true,
xlabel={$h$},
ymode=log,
ymin=1e-12,
ymax=100,
yminorticks=true,
ylabel={$\mathcal{E}_{P,h}$},
axis x line*=bottom,
axis y line*=left,
legend style={at={(0.03,0.97)},anchor=north west,legend cell align=left,align=left,draw=white!15!black}
]
\addplot [color=mycolor1,solid,mark=+,mark options={solid}]
  table[row sep=crcr]{%
0.1	1.99697062079537\\
0.0562341325190349	1.72977258988882\\
0.0316227766016838	0.781322874394886\\
0.0177827941003892	0.274041501642724\\
0.01	0.0894217269415249\\
0.00562341325190349	0.0285418681884722\\
0.00316227766016838	0.00905316108361094\\
0.00177827941003892	0.00286610508064599\\
0.001	0.000906511662868099\\
0.000562341325190349	0.000286677662672945\\
0.000316227766016838	9.06599850424159e-05\\
0.000177827941003892	2.8670152526422e-05\\
};

\addplot [color=mycolor2,solid,mark=o,mark options={solid}]
  table[row sep=crcr]{%
0.1	0.546383116130535\\
0.0562341325190349	0.086429585015011\\
0.0316227766016838	0.0102215160988692\\
0.0177827941003892	0.00107665863494139\\
0.01	0.000109607125261287\\
0.00562341325190349	1.10185325712024e-05\\
0.00316227766016838	1.10380613542212e-06\\
0.00177827941003892	1.1045986975924e-07\\
0.001	1.10465855504683e-08\\
0.000562341325190349	1.1084758121233e-09\\
0.000316227766016838	1.03415984199217e-10\\
0.000177827941003892	3.59119986430842e-11\\
};

\addplot [color=mycolor3,solid,mark=asterisk,mark options={solid}]
  table[row sep=crcr]{%
0.1	0.084080306372219\\
0.0562341325190349	0.00594354158902568\\
0.0316227766016838	0.000275076621490442\\
0.0177827941003892	1.00092814406047e-05\\
0.01	3.34821472290322e-07\\
0.00562341325190349	1.07891455607122e-08\\
0.00316227766016838	3.41385128151359e-10\\
0.00177827941003892	1.79169987680997e-11\\
0.001	1.78809705832022e-11\\
0.000562341325190349	1.78749034598323e-11\\
0.000316227766016838	1.78750346931266e-11\\
0.000177827941003892	2.69130211965167e-11\\
};

\addplot [color=mycolor4,solid,mark=diamond,mark options={solid}, forget plot]
  table[row sep=crcr]{%
0.1	0.0178006745529981\\
0.0562341325190349	0.000798024145417313\\
0.0316227766016838	3.93617425486699e-05\\
0.0177827941003892	8.31124142997175e-07\\
0.01	5.21351394121515e-09\\
0.00562341325190349	2.59684433879161e-11\\
0.00316227766016838	1.78801690884268e-11\\
0.00177827941003892	1.78832282264235e-11\\
0.001	1.78808683026381e-11\\
0.000562341325190349	1.78749088206945e-11\\
0.000316227766016838	1.78750230945903e-11\\
0.000177827941003892	2.69388191015191e-11\\
};

\addplot [color=mycolor5,solid,mark=triangle,mark options={solid}]
  table[row sep=crcr]{%
0.1	0.00456131470895431\\
0.0562341325190349	0.000174326339488176\\
0.0316227766016838	8.3667778358743e-06\\
0.0177827941003892	1.43326425642903e-07\\
0.01	7.73435954773869e-10\\
0.00562341325190349	1.78892421940189e-11\\
0.00316227766016838	1.78795655176405e-11\\
0.00177827941003892	1.78832389991264e-11\\
0.001	1.78807092926111e-11\\
0.000562341325190349	1.78749687988324e-11\\
0.000316227766016838	1.78752141615206e-11\\
0.000177827941003892	2.69526458060505e-11\\
};

\end{axis}
\end{tikzpicture}
\subfigure[Splitting]{{\footnotesize
%
%
\definecolor{mycolor1}{rgb}{0.00000,0.44700,0.74100}%
\definecolor{mycolor2}{rgb}{0.85000,0.32500,0.09800}%
\definecolor{mycolor3}{rgb}{0.92900,0.69400,0.12500}%
\definecolor{mycolor4}{rgb}{0.49400,0.18400,0.55600}%
\definecolor{mycolor6}{rgb}{0.30100,0.74500,0.93300}%
\begin{tikzpicture}[scale=0.9]

\begin{axis}[%
width=4.754601cm,
height=3.75cm,
at={(0cm,0cm)},
scale only axis,
xmode=log,
xmin=0.0001,
xmax=0.1,
xminorticks=true,
xlabel={$h$},
ymode=log,
ymin=1e-12,
ymax=10,
yminorticks=true,
ylabel={$\mathcal{E}_{P,h}$},
axis x line*=bottom,
axis y line*=left
]
\addplot [color=mycolor1,solid,mark=+,mark options={solid},forget plot]
  table[row sep=crcr]{%
0.1	1.87309026001833\\
0.0562341325190349	0.798301014795155\\
0.0316227766016838	0.261249715585908\\
0.0177827941003892	0.0793376849134703\\
0.01	0.0279811666762367\\
0.00562341325190349	0.0162780742689965\\
0.00316227766016838	0.0093345651321766\\
0.00177827941003892	0.00530797293476788\\
0.001	0.00300375540494367\\
0.000562341325190349	0.0016951507579121\\
0.000316227766016838	0.0009551644364806\\
0.000177827941003892	0.00053773434385313\\
};
\addplot [color=mycolor2,solid,mark=o,mark options={solid},forget plot]
  table[row sep=crcr]{%
0.1	1.5951097297171\\
0.0562341325190349	0.638492356910844\\
0.0316227766016838	0.217822501863104\\
0.0177827941003892	0.0707238230233109\\
0.01	0.0225690200642836\\
0.00562341325190349	0.00715498442402973\\
0.00316227766016838	0.00226469143985011\\
0.00177827941003892	0.000716493301786844\\
0.001	0.000226569891081145\\
0.000562341325190349	7.1646302478078e-05\\
0.000316227766016838	2.26571921577287e-05\\
0.000177827941003892	7.16501195086487e-06\\
};
\addplot [color=mycolor4,solid,mark=diamond,mark options={solid},forget plot]
  table[row sep=crcr]{%
0.1	0.523878214568341\\
0.0562341325190349	0.2494978190919\\
0.0316227766016838	0.0692419907740724\\
0.0177827941003892	0.010502094683802\\
0.01	0.00122402023104902\\
0.00562341325190349	0.000128452580087184\\
0.00316227766016838	1.30366386158537e-05\\
0.00177827941003892	1.30986500253166e-06\\
0.001	1.31168531653556e-07\\
0.000562341325190349	1.3135470986063e-08\\
0.000316227766016838	1.38072266938559e-09\\
0.000177827941003892	2.24648333270291e-10\\
};
\addplot [color=mycolor6,solid,mark=square,mark options={solid},forget plot]
  table[row sep=crcr]{%
0.1	0.0118980996760246\\
0.0562341325190349	0.000941485760558232\\
0.0316227766016838	7.45905124090538e-05\\
0.0177827941003892	1.83092617365721e-06\\
0.01	1.32426250031002e-08\\
0.00562341325190349	2.0689017692093e-10\\
0.00316227766016838	1.94501733136311e-11\\
0.00177827941003892	3.17623422295484e-11\\
0.001	5.72489216703228e-11\\
0.000562341325190349	1.17399122037057e-10\\
0.000316227766016838	1.44109401764864e-10\\
0.000177827941003892	3.01663656997811e-10\\
};
\end{axis}
\end{tikzpicture}
  \caption{Evolution of $\mathcal{E}_{P,h}$ error with respect to the time step {\color{black}for problem \eqref{eq:schro_num}} {\color{black}(see Figure \ref{fig:legends} for legends)}}
  \label{fig:EPh}
\end{figure}

The preservations of mass and energy are fundamental when one deals
with dispersive equations.
{\color{black}
Let us first discuss mass preservation.
The figure presented in Figure \ref{fig:EMh} shows the evolution of
$\mathcal{E}_{M,h}$ with respect to the time step for several Gauss--ERK methods.
Indeed, Gauss-Lawson methods preserve mass up to round-off (see Theorem
\ref{quadinv}) and so do splitting methods.
It is noticeable, however, that, on the numerical example
\eqref{eq:schro_num}-\eqref{eq:psiex}
a Gauss-ERK method with at least two stages achieves round-off in mass
preservation for time steps below $10^{-2}$.
}

\begin{figure}[!h]
  \centering
%
%
\definecolor{mycolor1}{rgb}{0.00000,0.44700,0.74100}%
\definecolor{mycolor2}{rgb}{0.85000,0.32500,0.09800}%
\definecolor{mycolor3}{rgb}{0.92900,0.69400,0.12500}%
\definecolor{mycolor4}{rgb}{0.49400,0.18400,0.55600}%
\definecolor{mycolor5}{rgb}{0.46600,0.67400,0.18800}%
\begin{tikzpicture}[scale=0.9]

\begin{axis}[%
width=4.754601cm,
height=3.75cm,
at={(0cm,0cm)},
scale only axis,
xmode=log,
xmin=0.0001,
xmax=0.1,
xminorticks=true,
xlabel={$h$},
ymode=log,
ymin=1e-15,
ymax=1,
yminorticks=true,
ylabel={$\mathcal{E}_{M,h}$},
axis x line*=bottom,
axis y line*=left
]
\addplot [color=mycolor1,solid,mark=+,mark options={solid},forget plot]
  table[row sep=crcr]{%
0.1	0.000523174626026691\\
0.0562341325190349	5.70170188803898e-05\\
0.0316227766016838	6.49751699210999e-06\\
0.0177827941003892	6.66173775587531e-07\\
0.01	6.70500047705858e-08\\
0.00562341325190349	6.71940247887193e-09\\
0.00316227766016838	6.72391142764184e-10\\
0.00177827941003892	6.72645272814521e-11\\
0.001	6.7464922537397e-12\\
0.000562341325190349	7.07545133593613e-13\\
0.000316227766016838	4.79727368940531e-13\\
0.000177827941003892	2.07722727907367e-13\\
};
\addplot [color=mycolor2,solid,mark=o,mark options={solid},forget plot]
  table[row sep=crcr]{%
0.1	5.1458089182277e-05\\
0.0562341325190349	4.50839990784325e-07\\
0.0316227766016838	2.37080877152352e-09\\
0.0177827941003892	6.64202026712247e-12\\
0.01	7.21644966006353e-15\\
0.00562341325190349	2.14273043752655e-14\\
0.00316227766016838	2.60902410786912e-14\\
0.00177827941003892	5.80646641878958e-14\\
0.001	8.62643290133748e-14\\
0.000562341325190349	1.31117339208231e-13\\
0.000316227766016838	4.30655511252099e-13\\
0.000177827941003892	2.02615701994091e-13\\
};
\addplot [color=mycolor3,solid,mark=asterisk,mark options={solid},forget plot]
  table[row sep=crcr]{%
0.1	1.46885925089713e-07\\
0.0562341325190349	7.36692817859819e-10\\
0.0316227766016838	3.65363295173893e-12\\
0.0177827941003892	2.55351295663786e-15\\
0.01	6.77236045021346e-15\\
0.00562341325190349	1.765254609154e-14\\
0.00316227766016838	2.25375273998907e-14\\
0.00177827941003892	5.54001289287954e-14\\
0.001	8.20454815197991e-14\\
0.000562341325190349	1.27342580924506e-13\\
0.000316227766016838	4.29767332832399e-13\\
0.000177827941003892	1.98840943710366e-13\\
};
\addplot [color=mycolor4,solid,mark=diamond,mark options={solid},forget plot]
  table[row sep=crcr]{%
0.1	1.96374116967491e-09\\
0.0562341325190349	3.15303338993545e-12\\
0.0316227766016838	2.48689957516035e-14\\
0.0177827941003892	1.55431223447522e-14\\
0.01	1.3766765505352e-14\\
0.00562341325190349	4.99600361081321e-15\\
0.00316227766016838	9.21485110438881e-15\\
0.00177827941003892	3.65263375101677e-14\\
0.001	6.73905375947471e-14\\
0.000562341325190349	1.12909681604379e-13\\
0.000316227766016838	4.08006961549745e-13\\
0.000177827941003892	1.90847337933065e-13\\
};
\addplot [color=mycolor5,solid,mark=triangle,mark options={solid},forget plot]
  table[row sep=crcr]{%
0.1	1.30165878076127e-11\\
0.0562341325190349	2.28705943072782e-14\\
0.0316227766016838	3.27515792264421e-14\\
0.0177827941003892	2.76445533131664e-14\\
0.01	2.02060590481779e-14\\
0.00562341325190349	4.44089209850063e-15\\
0.00316227766016838	9.65894031423887e-15\\
0.00177827941003892	3.74145159298678e-14\\
0.001	6.6946448384897e-14\\
0.000562341325190349	1.10245146345278e-13\\
0.000316227766016838	4.17998968771372e-13\\
0.000177827941003892	1.9328982858724e-13\\
};
\end{axis}
\end{tikzpicture}%
  \caption{Evolution of $\mathcal{E}_{M,h}$ error with respect to the time step {\color{black}for problem \eqref{eq:schro_num}} using Gauss-ERK methods {\color{black}(see Figure \ref{fig:legends} for legends)}}
  \label{fig:EMh}
\end{figure}
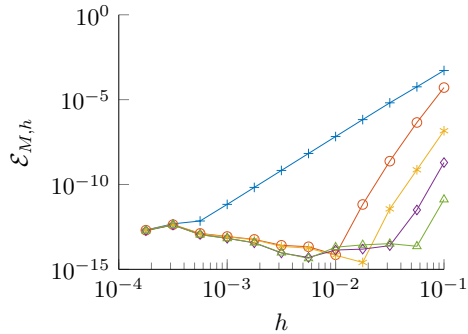

Concerning the energy conservation, none of the presented methods is able to handle it properly. However, it is clear in Figure \ref{fig:EEh}, which displays the evolution of $\mathcal{E}_{E,h}$ as a function of the time step for all schemes, that, with respect to energy preservation, Gauss-ERK methods are of better quality than Gauss-Lawson ones which are themselves better than splitting schemes.

\begin{figure}[!h]
  \centering
  \subfigure[Gauss-ERK]{{\footnotesize 
%
%
\definecolor{mycolor1}{rgb}{0.00000,0.44700,0.74100}%
\definecolor{mycolor2}{rgb}{0.85000,0.32500,0.09800}%
\definecolor{mycolor3}{rgb}{0.92900,0.69400,0.12500}%
\definecolor{mycolor4}{rgb}{0.49400,0.18400,0.55600}%
\definecolor{mycolor5}{rgb}{0.46600,0.67400,0.18800}%
\begin{tikzpicture}[scale=0.9]

\begin{axis}[%
width=4.754601cm,
height=3.75cm,
at={(0cm,0cm)},
scale only axis,
xmode=log,
xmin=0.0001,
xmax=0.1,
xminorticks=true,
xlabel={$h$},
ymode=log,
ymin=1e-15,
ymax=1,
yminorticks=true,
ylabel={$\mathcal{E}_{E,h}$},
axis x line*=bottom,
axis y line*=left
]
\addplot [color=mycolor1,solid,mark=+,mark options={solid},forget plot]
  table[row sep=crcr]{%
0.1	0.0201937086815006\\
0.0562341325190349	0.00144682379135614\\
0.0316227766016838	0.000137606667366022\\
0.0177827941003892	1.40397283541496e-05\\
0.01	1.4205757414988e-06\\
0.00562341325190349	1.42617341761378e-07\\
0.00316227766016838	1.42793064804053e-08\\
0.00177827941003892	1.4284134967742e-09\\
0.001	1.42735746719348e-10\\
0.000562341325190349	1.40676978263087e-11\\
0.000316227766016838	1.96616493578075e-12\\
0.000177827941003892	1.19912822840694e-12\\
};
\addplot [color=mycolor2,solid,mark=o,mark options={solid},forget plot]
  table[row sep=crcr]{%
0.1	0.000427532782837274\\
0.0562341325190349	5.08593949468999e-06\\
0.0316227766016838	1.74489853822676e-08\\
0.0177827941003892	1.65906720056108e-10\\
0.01	4.8223720099128e-14\\
0.00562341325190349	1.29819652295841e-13\\
0.00316227766016838	1.54805130463143e-13\\
0.00177827941003892	3.50670312459964e-13\\
0.001	5.36401524291026e-13\\
0.000562341325190349	8.18754899943891e-13\\
0.000316227766016838	2.72603797458912e-12\\
0.000177827941003892	1.24176079255255e-12\\
};
\addplot [color=mycolor3,solid,mark=asterisk,mark options={solid},forget plot]
  table[row sep=crcr]{%
0.1	2.13785905385124e-06\\
0.0562341325190349	1.30482714469314e-08\\
0.0316227766016838	3.75404188609357e-11\\
0.0177827941003892	7.94992487141422e-14\\
0.01	3.26733176033947e-14\\
0.00562341325190349	1.01863872528231e-13\\
0.00316227766016838	1.30169099542936e-13\\
0.00177827941003892	3.36867146199706e-13\\
0.001	5.12464387865009e-13\\
0.000562341325190349	7.97613341494636e-13\\
0.000316227766016838	2.72044681863559e-12\\
0.000177827941003892	1.21974561598555e-12\\
};
\addplot [color=mycolor4,solid,mark=diamond,mark options={solid},forget plot]
  table[row sep=crcr]{%
0.1	1.95532494239696e-08\\
0.0562341325190349	2.69663198874606e-11\\
0.0316227766016838	1.32964677519697e-13\\
0.0177827941003892	9.6097992951161e-14\\
0.01	8.89343243857108e-14\\
0.00562341325190349	2.39371364260165e-14\\
0.00316227766016838	4.45545240046292e-14\\
0.00177827941003892	2.18753976681552e-13\\
0.001	4.198608673848e-13\\
0.000562341325190349	7.05009821014426e-13\\
0.000316227766016838	2.58538545763332e-12\\
0.000177827941003892	1.16732852892128e-12\\
};
\addplot [color=mycolor5,solid,mark=triangle,mark options={solid},forget plot]
  table[row sep=crcr]{%
0.1	1.26272238566955e-10\\
0.0562341325190349	3.28305688645875e-13\\
0.0316227766016838	2.07222217527413e-13\\
0.0177827941003892	1.63890758887616e-13\\
0.01	1.2702407431908e-13\\
0.00562341325190349	2.39371364260165e-14\\
0.00316227766016838	4.76995492284854e-14\\
0.00177827941003892	2.23646238140884e-13\\
0.001	4.1549277679611e-13\\
0.000562341325190349	6.89284694895145e-13\\
0.000316227766016838	2.64566510775724e-12\\
0.000177827941003892	1.19004259998247e-12\\
};
\end{axis}
\end{tikzpicture}
\goodgap  \subfigure[Gauss-Lawson]{{\footnotesize
%
%
\definecolor{mycolor1}{rgb}{0.00000,0.44700,0.74100}%
\definecolor{mycolor2}{rgb}{0.85000,0.32500,0.09800}%
\definecolor{mycolor3}{rgb}{0.92900,0.69400,0.12500}%
\definecolor{mycolor4}{rgb}{0.49400,0.18400,0.55600}%
\definecolor{mycolor5}{rgb}{0.46600,0.67400,0.18800}%
\begin{tikzpicture}[scale=0.9]

\begin{axis}[%
width=4.754601cm,
height=3.75cm,
at={(0cm,0cm)},
scale only axis,
xmode=log,
xmin=0.0001,
xmax=0.1,
xminorticks=true,
xlabel={$h$},
ymode=log,
ymin=1e-15,
ymax=1,
yminorticks=true,
ylabel={$\mathcal{E}_{E,h}$},
axis x line*=bottom,
axis y line*=left,
legend style={at={(0.03,0.97)},anchor=north west,legend cell align=left,align=left,draw=white!15!black}
]
\addplot [color=mycolor1,solid,mark=+,mark options={solid}]
  table[row sep=crcr]{%
0.1	0.070517508432005\\
0.0562341325190349	0.0150855154437275\\
0.0316227766016838	0.0022044480723283\\
0.0177827941003892	0.000257060631409899\\
0.01	2.71531329748644e-05\\
0.00562341325190349	2.76455988977689e-06\\
0.00316227766016838	2.78054884122933e-07\\
0.00177827941003892	2.78563178207351e-08\\
0.001	2.78725274554625e-09\\
0.000562341325190349	2.78815571723275e-10\\
0.000316227766016838	2.77997515717825e-11\\
0.000177827941003892	4.70792803648915e-12\\
};

\addplot [color=mycolor2,solid,mark=o,mark options={solid}]
  table[row sep=crcr]{%
0.1	0.111598057057056\\
0.0562341325190349	0.00669172126124561\\
0.0316227766016838	0.000152016116480583\\
0.0177827941003892	2.65286858011137e-07\\
0.01	2.0889519622288e-10\\
0.00562341325190349	1.90396332579782e-12\\
0.00316227766016838	8.12464849496179e-14\\
0.00177827941003892	2.60338199085872e-14\\
0.001	1.20384576624273e-13\\
0.000562341325190349	3.54688955801558e-13\\
0.000316227766016838	6.8823635315386e-13\\
0.000177827941003892	2.78457038847755e-12\\
};

\addplot [color=mycolor3,solid,mark=asterisk,mark options={solid}]
  table[row sep=crcr]{%
0.1	0.0123109558790268\\
0.0562341325190349	0.000214478337027028\\
0.0316227766016838	6.25043394528406e-07\\
0.0177827941003892	1.50286428110956e-10\\
0.01	4.5428142122367e-14\\
0.00562341325190349	8.73618117737827e-15\\
0.00316227766016838	6.42982934655041e-14\\
0.00177827941003892	2.63832671556824e-14\\
0.001	1.17588998647511e-13\\
0.000562341325190349	3.52417548695439e-13\\
0.000316227766016838	6.8893524764805e-13\\
0.000177827941003892	2.79068571530171e-12\\
};

\addplot [color=mycolor4,solid,mark=diamond,mark options={solid}]
  table[row sep=crcr]{%
0.1	0.0018676507181163\\
0.0562341325190349	2.69548366922135e-05\\
0.0316227766016838	2.27268686056284e-07\\
0.0177827941003892	1.86853461665108e-10\\
0.01	2.46360309202067e-14\\
0.00562341325190349	8.73618117737827e-15\\
0.00316227766016838	6.48224643361468e-14\\
0.00177827941003892	2.77810561440629e-14\\
0.001	1.20035129377177e-13\\
0.000562341325190349	3.52242825071892e-13\\
0.000316227766016838	6.89459418518693e-13\\
0.000177827941003892	2.78963737356043e-12\\
};

\addplot [color=mycolor5,solid,mark=triangle,mark options={solid}]
  table[row sep=crcr]{%
0.1	0.00031017648998372\\
0.0562341325190349	2.43728435781228e-06\\
0.0316227766016838	1.13964309901639e-08\\
0.0177827941003892	5.69214620793259e-12\\
0.01	1.64240206134711e-14\\
0.00562341325190349	9.95924654221123e-15\\
0.00316227766016838	6.5521358830337e-14\\
0.00177827941003892	2.8130503391158e-14\\
0.001	1.1916151125944e-13\\
0.000562341325190349	3.52592272318987e-13\\
0.000316227766016838	6.8823635315386e-13\\
0.000177827941003892	2.78719124283076e-12\\
};

\end{axis}
\end{tikzpicture}
\subfigure[Splitting]{{\footnotesize
%
%
\definecolor{mycolor1}{rgb}{0.00000,0.44700,0.74100}%
\definecolor{mycolor2}{rgb}{0.85000,0.32500,0.09800}%
\definecolor{mycolor3}{rgb}{0.92900,0.69400,0.12500}%
\definecolor{mycolor4}{rgb}{0.49400,0.18400,0.55600}%
\definecolor{mycolor6}{rgb}{0.30100,0.74500,0.93300}%
\begin{tikzpicture}[scale=0.9]

\begin{axis}[%
width=4.754601cm,
height=3.75cm,
at={(0cm,0cm)},
scale only axis,
xmode=log,
xmin=0.0001,
xmax=0.1,
xminorticks=true,
xlabel={$h$},
ymode=log,
ymin=1e-15,
ymax=1,
yminorticks=true,
ylabel={$\mathcal{E}_{E,h}$},
axis x line*=bottom,
axis y line*=left
]
\addplot [color=mycolor1,solid,mark=+,mark options={solid},forget plot]
  table[row sep=crcr]{%
0.1	1.36166793583039\\
0.0562341325190349	0.13588599752223\\
0.0316227766016838	0.0383296138655873\\
0.0177827941003892	0.0119622597015735\\
0.01	0.00375437372260611\\
0.00562341325190349	0.00118135243430094\\
0.00316227766016838	0.000372446791575382\\
0.00177827941003892	0.000117568207615927\\
0.001	3.71401352942058e-05\\
0.000562341325190349	1.17378547609058e-05\\
0.000316227766016838	3.71060185139852e-06\\
0.000177827941003892	1.17317482120217e-06\\
};
\addplot [color=mycolor2,solid,mark=o,mark options={solid},forget plot]
  table[row sep=crcr]{%
0.1	0.841776681000699\\
0.0562341325190349	0.0366015471644537\\
0.0316227766016838	0.00169852713111059\\
0.0177827941003892	0.000114940960236277\\
0.01	1.14652890582638e-05\\
0.00562341325190349	1.1489636941973e-06\\
0.00316227766016838	1.14951680786146e-07\\
0.00177827941003892	1.15007726183976e-08\\
0.001	1.14976793113664e-09\\
0.000562341325190349	1.14282878242744e-10\\
0.000316227766016838	1.09724688351636e-11\\
0.000177827941003892	8.91614650963226e-13\\
};
\addplot [color=mycolor4,solid,mark=diamond,mark options={solid},forget plot]
  table[row sep=crcr]{%
0.1	3.78867138868433\\
0.0562341325190349	0.0738477561737565\\
0.0316227766016838	0.00156985866829012\\
0.0177827941003892	4.15218357148449e-06\\
0.01	1.86261959298895e-08\\
0.00562341325190349	1.86135172848704e-10\\
0.00316227766016838	1.82254211722465e-12\\
0.00177827941003892	7.10076806097306e-13\\
0.001	9.42808672662663e-13\\
0.000562341325190349	1.49126612697847e-12\\
0.000316227766016838	8.39215036261311e-12\\
0.000177827941003892	1.13400873391076e-11\\
};
\addplot [color=mycolor6,solid,mark=square,mark options={solid},forget plot]
  table[row sep=crcr]{%
0.1	0.00899850713662092\\
0.0562341325190349	8.88478323114431e-05\\
0.0316227766016838	8.84375160032996e-07\\
0.0177827941003892	9.21513881595556e-10\\
0.01	5.44788258221309e-13\\
0.00562341325190349	1.20611717334884e-12\\
0.00316227766016838	2.16692237923691e-12\\
0.00177827941003892	3.20478070310944e-12\\
0.001	6.5378085459028e-12\\
0.000562341325190349	1.48995569980186e-11\\
0.000316227766016838	1.59363669801499e-11\\
0.000177827941003892	3.63505509845768e-11\\
};
\end{axis}
\end{tikzpicture}
  \caption{Evolution of $\mathcal{E}_{E,h}$ error with respect to the time step {\color{black}for problem \eqref{eq:schro_num}} {\color{black}(see Figure \ref{fig:legends} for legends)}}
  \label{fig:EEh}
\end{figure}

Finally, Figure \ref{fig:CPU} shows the evolution of the CPU time with respect to the $\mathcal{E}_{P,h}$ error. Theses figures are very interesting since one could think that implicit methods are more costly compared to splitting schemes. For a given error $\mathcal{E}_{P,h}$, the CPU time is clearly lower for Gauss-ERK schemes. For a ``big'' phase error $\mathcal{E}_{P,h} \geq 10^{-2}$, the splitting schemes are less costly than Gauss-Lawson ones. The situation is reversed when one is interested in ``small'' phase errors $\mathcal{E}_{P,h} \leq 10^{-2}$ and Gauss-Lawson schemes have to be recommended. For both ERK and Lawsons schemes, it is not necessarily interesting to use a high number of collocation points. 

\begin{figure}[!h]
  \centering
  \subfigure[Gauss-ERK]{{\footnotesize 
%
%
\definecolor{mycolor1}{rgb}{0.00000,0.44700,0.74100}%
\definecolor{mycolor2}{rgb}{0.85000,0.32500,0.09800}%
\definecolor{mycolor3}{rgb}{0.92900,0.69400,0.12500}%
\definecolor{mycolor4}{rgb}{0.49400,0.18400,0.55600}%
\definecolor{mycolor5}{rgb}{0.46600,0.67400,0.18800}%
\begin{tikzpicture}[scale=0.9]

\begin{axis}[%
width=4.754601cm,
height=3.75cm,
at={(0cm,0cm)},
scale only axis,
xmode=log,
xmin=1e-12,
xmax=10,
xminorticks=true,
xlabel={$\mathcal{E}_{P,h}$},
ymode=log,
ymin=0.1,
ymax=1000,
yminorticks=true,
ylabel={CPU time},
axis x line*=bottom,
axis y line*=left,
grid=both
]
\addplot [color=mycolor1,solid,mark=+,mark options={solid},forget plot]
  table[row sep=crcr]{%
1.84557116883949	0.328174053\\
0.854659839575447	0.365371009\\
0.296255732876074	0.502674897\\
0.0961408191943907	0.670820865\\
0.0306557640598316	0.994640228\\
0.00971569216073048	1.505160506\\
0.00307488382984994	2.432359295\\
0.000972785095467171	3.958335111\\
0.000307611457316651	6.34697398399999\\
9.72730893833666e-05	10.0477732680001\\
3.07612955269807e-05	17.771279612\\
9.72781564205045e-06	27.946016024\\
};
\addplot [color=mycolor2,solid,mark=o,mark options={solid},forget plot]
  table[row sep=crcr]{%
0.0206578562543729	0.557592216\\
0.0018965971253223	0.638810398\\
0.000184889727845496	0.924043011\\
1.84075541001901e-05	1.358599943\\
1.83966489515784e-06	1.890271265\\
1.83875690309045e-07	2.788056384\\
1.83838390994948e-08	4.431067056\\
1.83455490448779e-09	6.99998537199999\\
1.78013931350978e-10	10.989256002\\
1.84182054965474e-11	19.307019064\\
2.62749707880861e-11	34.0237457649999\\
1.87358119529605e-11	52.5867491940001\\
};
\addplot [color=mycolor3,solid,mark=asterisk,mark options={solid},forget plot]
  table[row sep=crcr]{%
0.000104113083490949	0.695242178\\
3.46924799492737e-06	0.844285419\\
9.75706314906392e-08	1.229205774\\
2.81564462517891e-09	1.761198815\\
6.03476322824619e-11	2.622525585\\
1.78668283334528e-11	4.226334809\\
1.78793120866082e-11	6.78551690899999\\
1.78817098234705e-11	10.809797998\\
1.78802214977198e-11	16.89374449\\
1.78779835636396e-11	29.725667921\\
2.80640001297142e-11	52.567095799\\
1.87126209104355e-11	81.1794195870001\\
};
\addplot [color=mycolor4,solid,mark=diamond,mark options={solid},forget plot]
  table[row sep=crcr]{%
1.95975025292956e-06	0.916195853\\
4.69043548935922e-08	1.1437243\\
7.47730425142384e-10	1.588373687\\
1.97998503867999e-11	2.383441754\\
1.7866008510553e-11	3.663287967\\
1.78657825419057e-11	5.932114346\\
1.78793582225984e-11	9.48416689900001\\
1.78820304592522e-11	15.048209433\\
1.78806149103514e-11	25.6591066429999\\
1.787798555604e-11	41.5324919469999\\
2.72759731563889e-11	73.3152832650003\\
1.82576059116287e-11	112.187513657\\
};
\addplot [color=mycolor5,solid,mark=triangle,mark options={solid},forget plot]
  table[row sep=crcr]{%
5.37233329755416e-08	1.139763956\\
6.39038608078418e-10	1.361641929\\
1.81089773077742e-11	1.916955979\\
1.78653608612102e-11	2.90259713\\
1.7866527146341e-11	4.75911389300001\\
1.78656282966088e-11	7.539700938\\
1.78790630111594e-11	12.048331324\\
1.78820063289106e-11	19.008266958\\
1.78803180285659e-11	33.5663629030001\\
1.78779312278978e-11	52.2225415250001\\
2.72448712009838e-11	92.4421943679994\\
1.8150857704615e-11	141.338254376999\\
};
\end{axis}
\end{tikzpicture}
\goodgap  \subfigure[Gauss-Lawson]{{\footnotesize
%
%
\definecolor{mycolor1}{rgb}{0.00000,0.44700,0.74100}%
\definecolor{mycolor2}{rgb}{0.85000,0.32500,0.09800}%
\definecolor{mycolor3}{rgb}{0.92900,0.69400,0.12500}%
\definecolor{mycolor4}{rgb}{0.49400,0.18400,0.55600}%
\definecolor{mycolor5}{rgb}{0.46600,0.67400,0.18800}%
\begin{tikzpicture}[scale=0.9]

\begin{axis}[%
width=4.754601cm,
height=3.75cm,
at={(0cm,0cm)},
scale only axis,
xmode=log,
xmin=1e-12,
xmax=10,
xminorticks=true,
xlabel={$  \mathcal{E}_{P,h}$},
ymode=log,
ymin=0.1,
ymax=1000,
yminorticks=true,
ylabel={CPU time},
axis x line*=bottom,
axis y line*=left,
legend style={legend cell align=left,align=left,draw=white!15!black},
grid=both
]
\addplot [color=mycolor1,solid,mark=+,mark options={solid}]
  table[row sep=crcr]{%
1.99697062079537	0.488077228\\
1.72977258988882	0.531012598\\
0.781322874394886	0.716684801\\
0.274041501642724	1.114468123\\
0.0894217269415249	1.47312851\\
0.0285418681884722	2.184927222\\
0.00905316108361094	3.413007539\\
0.00286610508064599	5.383366523\\
0.000906511662868099	8.68564070599998\\
0.000286677662672945	13.720143605\\
9.06599850424159e-05	24.4287055839999\\
2.8670152526422e-05	38.2086283859999\\
};

\addplot [color=mycolor2,solid,mark=o,mark options={solid}]
  table[row sep=crcr]{%
0.546383116130535	0.721768742\\
0.086429585015011	0.871906866\\
0.0102215160988692	1.263190042\\
0.00107665863494139	1.707635729\\
0.000109607125261287	2.547073565\\
1.10185325712024e-05	4.27548587799999\\
1.10380613542212e-06	6.229875457\\
1.1045986975924e-07	9.62148146100001\\
1.10465855504683e-08	16.962075928\\
1.1084758121233e-09	26.5870297990002\\
1.03415984199217e-10	47.0735980819998\\
3.59119986430842e-11	72.8071963179998\\
};

\addplot [color=mycolor3,solid,mark=asterisk,mark options={solid}]
  table[row sep=crcr]{%
0.084080306372219	1.027218289\\
0.00594354158902568	1.206500743\\
0.000275076621490442	1.769843406\\
1.00092814406047e-05	2.598145153\\
3.34821472290322e-07	3.870545335\\
1.07891455607122e-08	6.432007899\\
3.41385128151359e-10	10.226090992\\
1.79169987680997e-11	16.325877841\\
1.78809705832022e-11	29.189811039\\
1.78749034598323e-11	46.056203267\\
1.78750346931266e-11	81.8416295669998\\
2.69130211965167e-11	128.715827467999\\
};

\addplot [color=mycolor4,solid,mark=diamond,mark options={solid}]
  table[row sep=crcr]{%
0.0178006745529981	1.220959254\\
0.000798024145417313	1.567575132\\
3.93617425486699e-05	2.237776838\\
8.31124142997175e-07	3.367513965\\
5.21351394121515e-09	5.341224768\\
2.59684433879161e-11	8.61245523300001\\
1.78801690884268e-11	13.850753712\\
1.78832282264235e-11	22.20419608\\
1.78808683026381e-11	39.4363205179999\\
1.78749088206945e-11	62.5125740009999\\
1.78750230945903e-11	110.986429557\\
2.69388191015191e-11	172.964082147001\\
};

\addplot [color=mycolor5,solid,mark=triangle,mark options={solid}]
  table[row sep=crcr]{%
0.00456131470895431	1.507183643\\
0.000174326339488176	1.877257764\\
8.3667778358743e-06	2.717141223\\
1.43326425642903e-07	4.115026595\\
7.73435954773869e-10	6.661318297\\
1.78892421940189e-11	10.80925073\\
1.78795655176405e-11	17.390377922\\
1.78832389991264e-11	27.9595733229999\\
1.78807092926111e-11	49.350133308\\
1.78749687988324e-11	78.2026422930003\\
1.78752141615206e-11	138.475083828\\
2.69526458060505e-11	216.413550924999\\
};

\end{axis}
\end{tikzpicture}
\subfigure[Splitting]{{\footnotesize
%
%
\definecolor{mycolor1}{rgb}{0.00000,0.44700,0.74100}%
\definecolor{mycolor2}{rgb}{0.85000,0.32500,0.09800}%
\definecolor{mycolor3}{rgb}{0.92900,0.69400,0.12500}%
\definecolor{mycolor4}{rgb}{0.49400,0.18400,0.55600}%
\definecolor{mycolor6}{rgb}{0.30100,0.74500,0.93300}%
\begin{tikzpicture}[scale=0.9]

\begin{axis}[%
width=4.754601cm,
height=3.8125cm,
at={(0cm,0cm)},
scale only axis,
xmode=log,
xmin=1e-12,
xmax=10,
xminorticks=true,
xlabel={$\mathcal{E}_{P,h}$},
ymode=log,
ymin=0.1,
ymax=1000,
yminorticks=true,
ylabel={CPU time},
axis x line*=bottom,
axis y line*=left,
grid=both
]
\addplot [color=mycolor1,solid,mark=+,mark options={solid},forget plot]
  table[row sep=crcr]{%
1.87309026001833	0.235192842\\
0.798301014795155	0.295255082\\
0.261249715585908	0.543313691\\
0.0793376849134703	0.559619316\\
0.0279811666762367	0.885685204\\
0.0162780742689965	1.418827469\\
0.0093345651321766	1.588108281\\
0.00530797293476788	2.769436808\\
0.00300375540494367	4.89008192499999\\
0.0016951507579121	8.68550224699995\\
0.0009551644364806	15.443192954\\
0.00053773434385313	23.525553458\\
};
\addplot [color=mycolor2,solid,mark=o,mark options={solid},forget plot]
  table[row sep=crcr]{%
1.5951097297171	0.289778064\\
0.638492356910844	0.412300016\\
0.217822501863104	0.607916839\\
0.0707238230233109	0.969057364\\
0.0225690200642836	1.473442969\\
0.00715498442402973	1.634994834\\
0.00226469143985011	2.89551776500001\\
0.000716493301786844	4.995108253\\
0.000226569891081145	8.84447101700001\\
7.1646302478078e-05	15.604552938\\
2.26571921577287e-05	23.3814342510001\\
7.16501195086487e-06	41.7471752290001\\
};
\addplot [color=mycolor4,solid,mark=diamond,mark options={solid},forget plot]
  table[row sep=crcr]{%
0.523878214568341	0.560304113\\
0.2494978190919	0.762174357\\
0.0692419907740724	1.177767469\\
0.010502094683802	1.574432997\\
0.00122402023104902	2.436093596\\
0.000128452580087184	3.939701165\\
1.30366386158537e-05	5.552385993\\
1.30986500253166e-06	9.701221799\\
1.31168531653556e-07	15.811961589\\
1.3135470986063e-08	27.9448356929999\\
1.38072266938559e-09	45.303318866\\
2.24648333270291e-10	72.7190469269996\\
};
\addplot [color=mycolor6,solid,mark=square,mark options={solid},forget plot]
  table[row sep=crcr]{%
0.0118980996760246	1.707253168\\
0.000941485760558232	2.41483724\\
7.45905124090538e-05	3.784006992\\
1.83092617365721e-06	5.372549014\\
1.32426250031002e-08	8.488832275\\
2.0689017692093e-10	14.202644008\\
1.94501733136311e-11	21.787224319\\
3.17623422295484e-11	36.686250873\\
5.72489216703228e-11	60.3523145830001\\
1.17399122037057e-10	95.9837255569999\\
1.44109401764864e-10	160.646040712999\\
3.01663656997811e-10	275.450163347001\\
};
\end{axis}
\end{tikzpicture}
  \caption{Evolution of the CPU time with respect to $\mathcal{E}_{P,h}$ error {\color{black}for problem \eqref{eq:schro_num}} {\color{black}(see Figure \ref{fig:legends} for legends)}}
  \label{fig:CPU}
\end{figure}
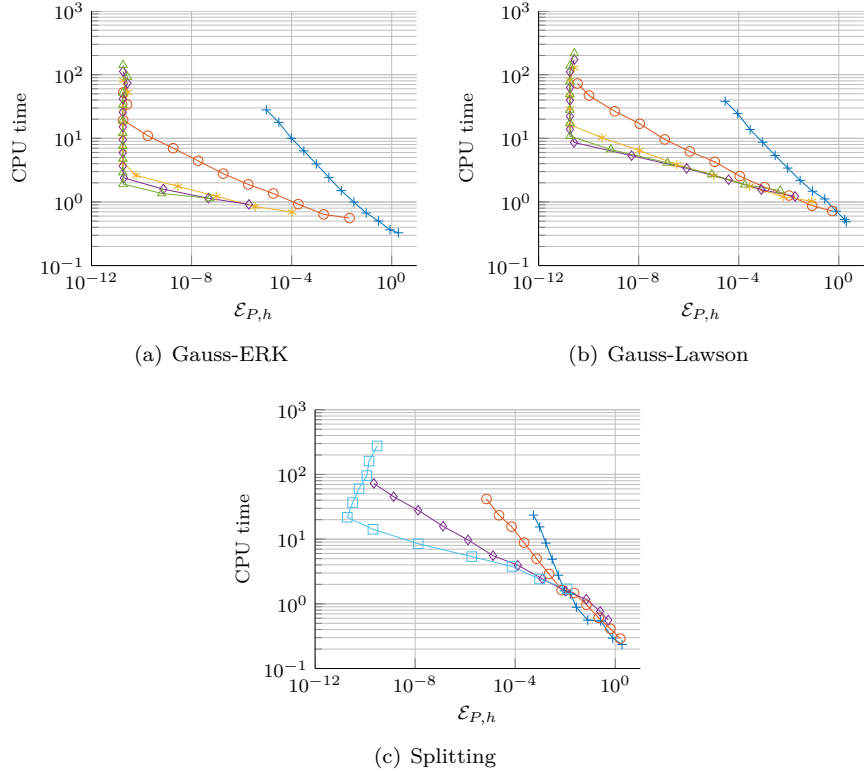
 
{\color{black} Another interesting question is the behaviour of conserved quantities (mass, energy) by the methods over long times. For this numerical study, the final time is set to $5000$ and to avoid any hazardous behaviours due to boundary conditions, we choose as initial condition $\psi(0,x)=|\sin(x)|$ and $x\in [-\pi,\pi)$. With this choice, the datum belongs to $H^1(\T_{2\pi}^{1})$.
The number of Fourier modes is $2^{10}$ and we take $h=10^{-2}$. We are interested in the quantities $\mathcal{E}_{M,h}(t_n)$ defined in \eqref{eq:errmass} and $\mathcal{E}_{E,h}(t_n)$ defined in \eqref{eq:errener}.
We plot in Figure \ref{fig:LongErrMass} the evolution of $\mathcal{E}_{M,h}(t_n)$ for both Gauss-ERK and Gauss-Lawson methods. The $L^2$-norm conservation holds up to roundoff for Gauss-Lawson methods as predicted by Theorem \ref{quadinv}. Almost preservation of the $L^2$-norm over long times seems to hold for Gauss-ERK methods. In contrast, the evolution of $\mathcal{E}_{E,h}(t_n)$ (Figure \ref{fig:LongErrMass}) 
shows that the Gauss-Lawson scheme breaks down in such a situation. The Gauss-ERK scheme seems to almost preserve the energy and is clearly superior, with respect to energy preservation, to the Lawson scheme for long time simulations. The break down phenomenon of the Lawson scheme can be reduced by selecting a smaller time step ($h=10^{-3}$, see Figure \ref{fig:LongErrEner2}). 
However, the numerical cost is dramatically more important compared to a Gauss-ERK scheme.
}
\begin{figure}[!h]
  \centering
  \subfigure[Gauss-ERK]{\includegraphics[width=.48\textwidth]{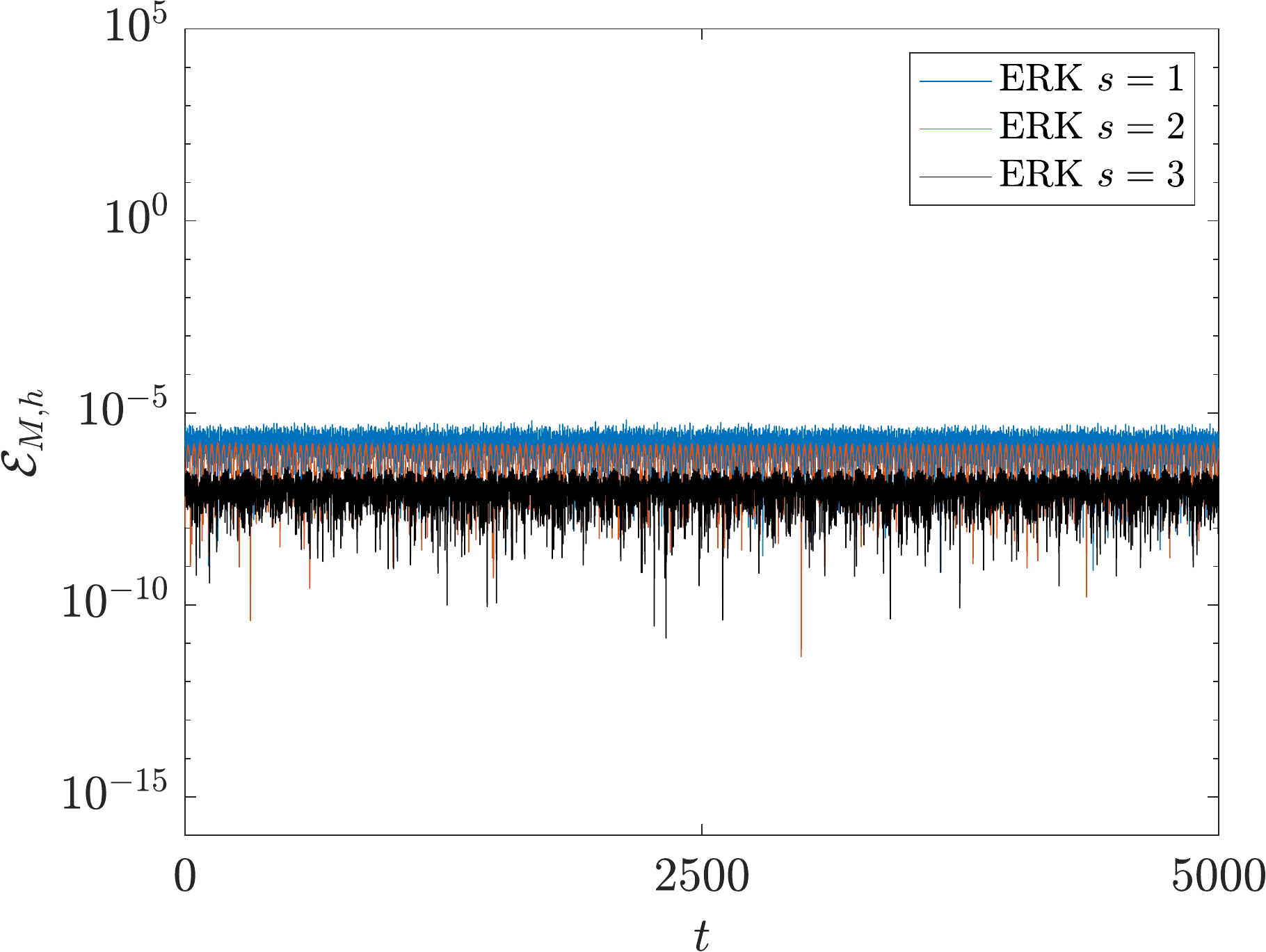}}
\goodgap  \subfigure[Gauss-Lawson]{\includegraphics[width=.48\textwidth]{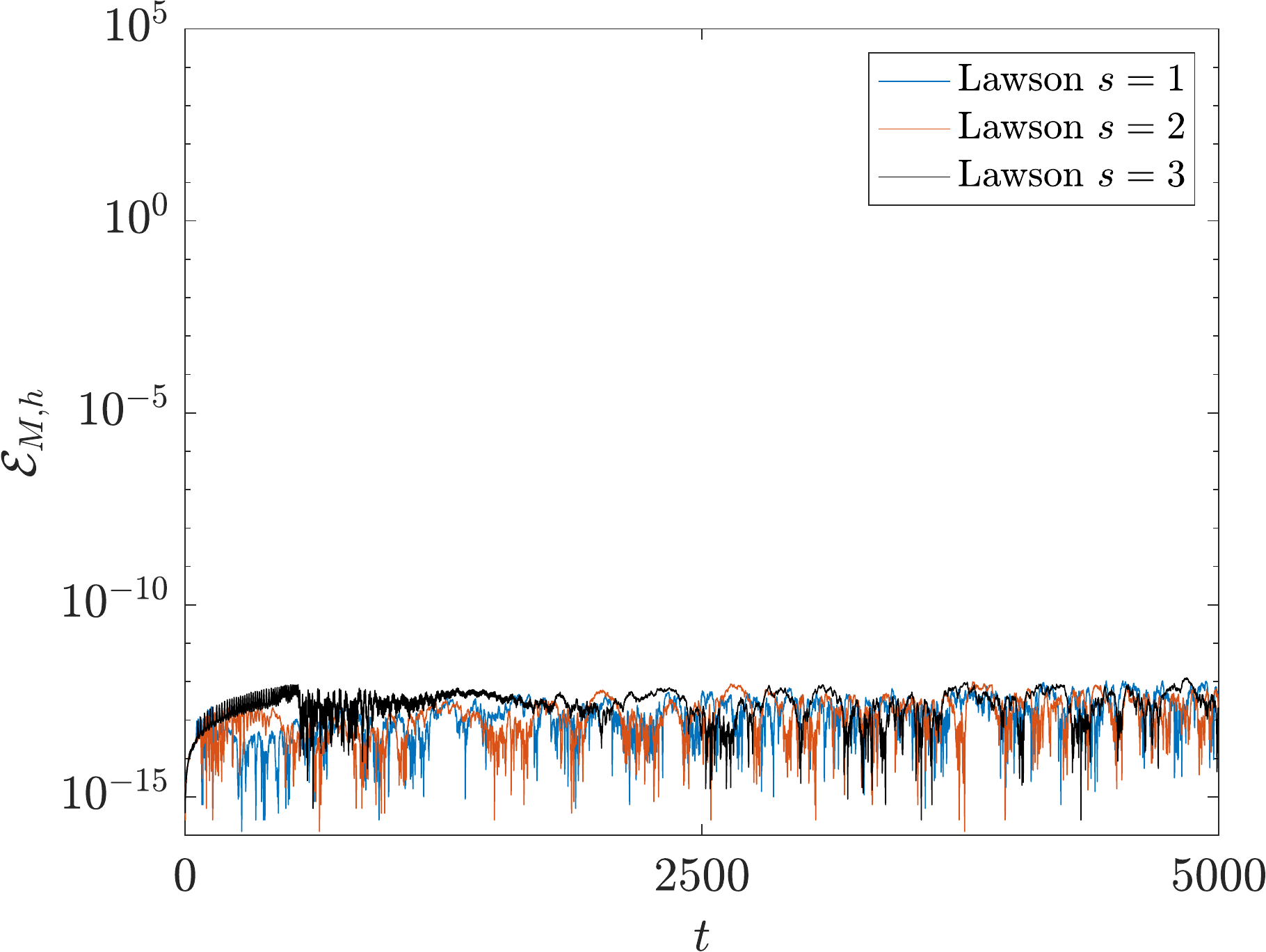}}\\
  \subfigure[Gauss-ERK]{\includegraphics[width=.48\textwidth]{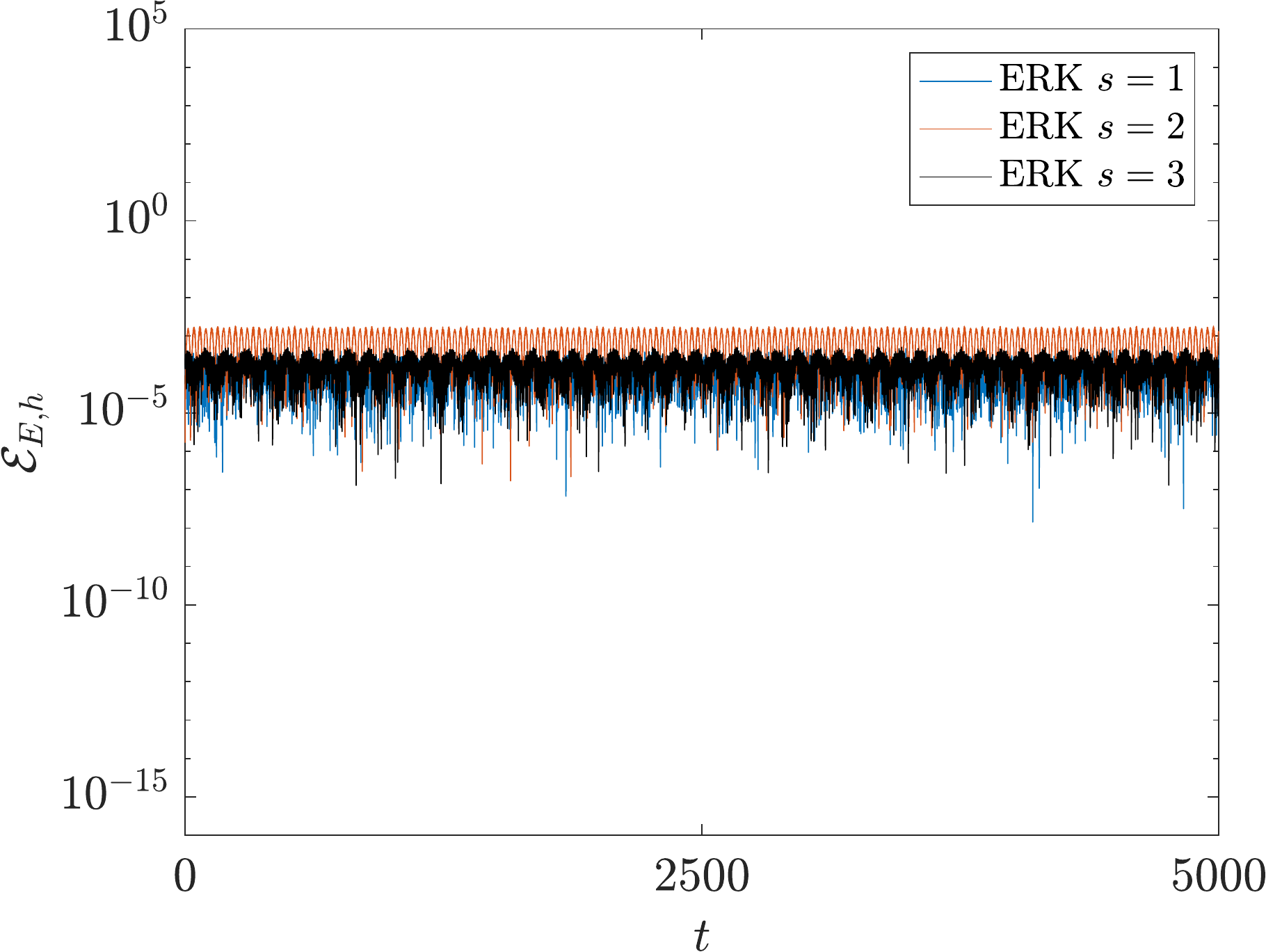}}
\goodgap  \subfigure[Gauss-Lawson]{\includegraphics[width=.48\textwidth]{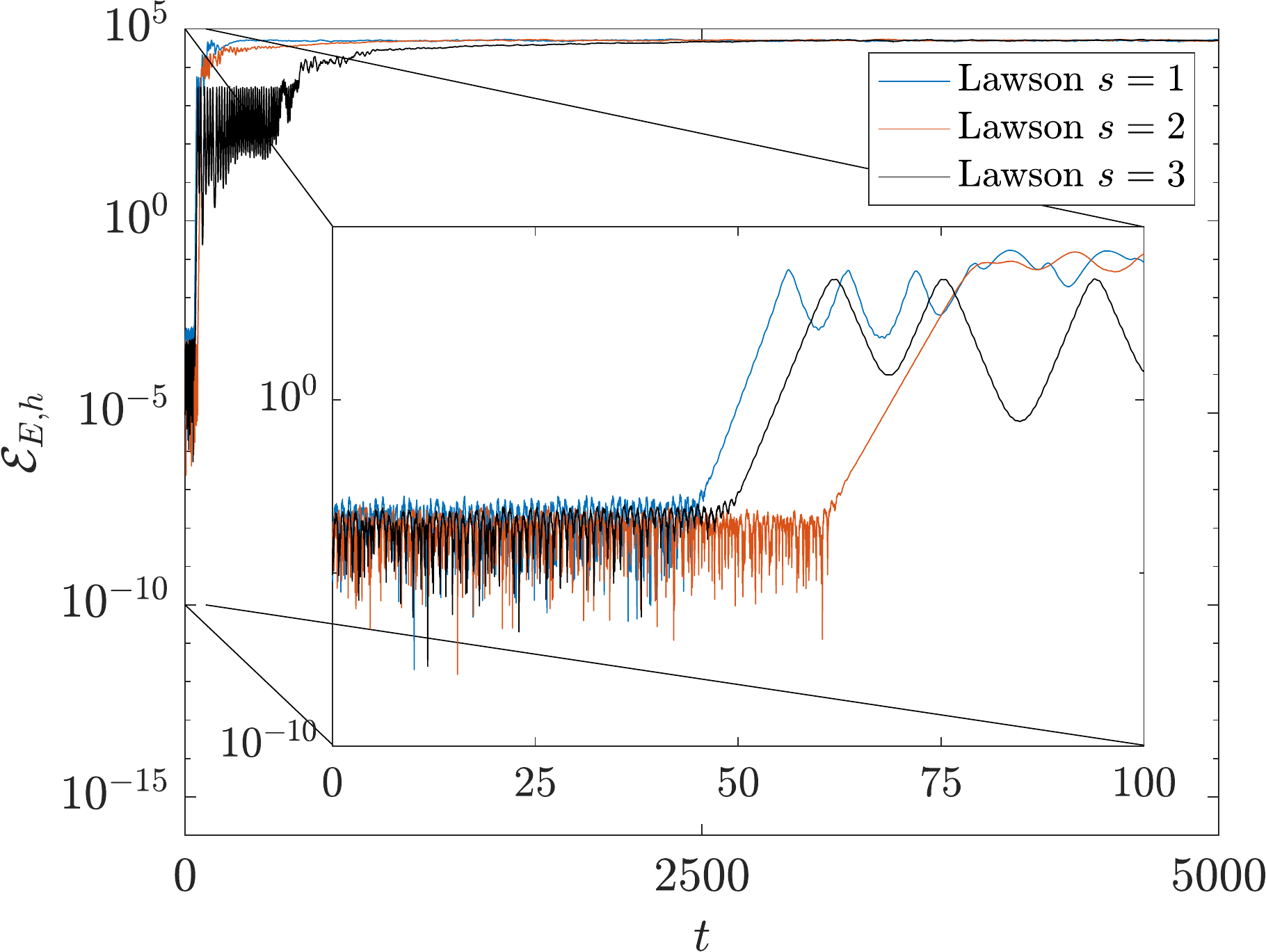}}
  \caption{{\color{black}Evolution of $\mathcal{E}_{M,h}(t_n)$ (figures (a) and (b)) and $\mathcal{E}_{E,h}(t_n)$ (figures (c) and (d)), $h=10^{-2}$}}
  \label{fig:LongErrMass}
\end{figure}


\begin{figure}[!h]
  \centering
  \includegraphics[width=.48\textwidth]{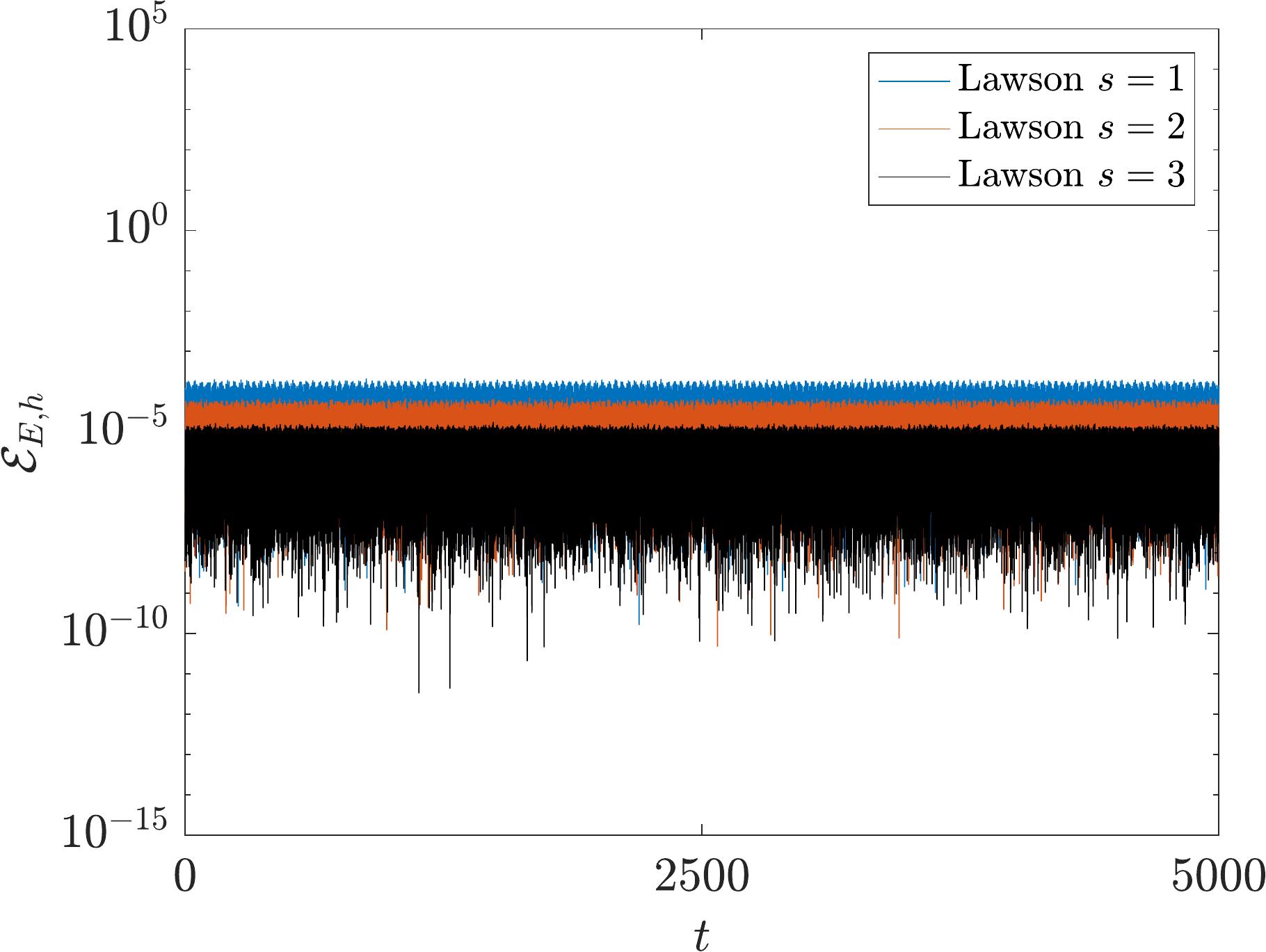}
  \caption{Evolution of $\mathcal{E}_{E,h}(t_n)$, $h=10^{-3}$, Gauss-Lawson}
  \label{fig:LongErrEner2}
\end{figure}

{\color{black}In conclusion, even if the Gauss-ERK schemes do not preserve theoretically neither the $L^2$ norm nor the energy, they allow us to preserve them numerically for reasonably small time step $h$ even in long time simulations and clearly are the best methods for one dimensional simulations of the cubic NLS equations.}

\subsection{One dimensional example: \textcolor{black}{a NLS equation with time-dependent potential and cubic-plus-quintic nonlinearity}}
\label{Comp1d2}

{\color{black}We consider here the following one-dimensional NLSE (see e.g. \cite{BeCu})
\begin{equation}
  \label{eq:BeCu}
  i \partial_t \psi = -\partial_x^2\psi + V\psi + G_1|\psi|^2\psi + G_2|\psi|^4\psi,
\end{equation}
with a cubic-plus-quintic nonlinearity and an external time-dependent potential given by
$V(t,x)=\frac x2 \omega^2\cos(\omega t)$.

The splitting schemes have to be applied carefully for this equation since the step associated to the potential term leads to a non-autonomous problem. However, the Gauss-ERK and Gauss-Lawson methods can be applied straightforwardly for this test case.

We know in this case an exact solution (the bright soliton) given by 
$$
\displaystyle \psi_{\textrm{ex}}(t,x)=\eta
\frac{ \exp\Big(i\Big( -\frac\omega2 x \sin(\omega t + \beta_0) - \frac{\omega^2}8 t
+  \frac{\omega}{16}\sin(2\omega t + 2\beta_0) - E_{\textrm{c}}t \Big)\Big)}
{\left( \sqrt{1-b}\cosh(2\sqrt{-E_{\textrm{c}}}(x-\cos(\omega t))) + 1 \right)^{1/2}}.
$$
The final time $T$ is chosen to be $5$. The parameters of the reference solution are fixed to
$$
G_1=-2, \ G_2=\frac12, \ \omega=2, \ E_{\textrm{c}}=-1, \ \beta_0=0,
\ \eta = \sqrt{\frac{4 E_{\textrm{c}}}{G_1}}, \ b = - 16 \frac{E_{\textrm{c}} G_2}{3 G_1^2}.
$$
The spatial computational domain is $\mathcal{D}:=]-32,32[$,
with $N_x=2^{11}$ Fourier modes.
For this example, the energy is not conserved.
We therefore present 
in Figure \ref{fig:BeCuEPh}
only the evolution of $\mathcal{E}_{P,h}$
and $\mathcal{E}_{M,h}$ errors.
{\color{black}
We only draw the error curves for $s\leq 4$
for Gauss-ERK and Gauss-Lawson methods since the curves for $s=5$
are identical to the ones for $s=4$.
and since Gauss-Lawson methods preserve the mass up to round-off
(see Theorem \ref{quadinv}).
}
Like for the cubic NLS equation in subsection \ref{Comp1d},
we have good decay rate of the $\mathcal{E}_{P,h}$ curves.
}

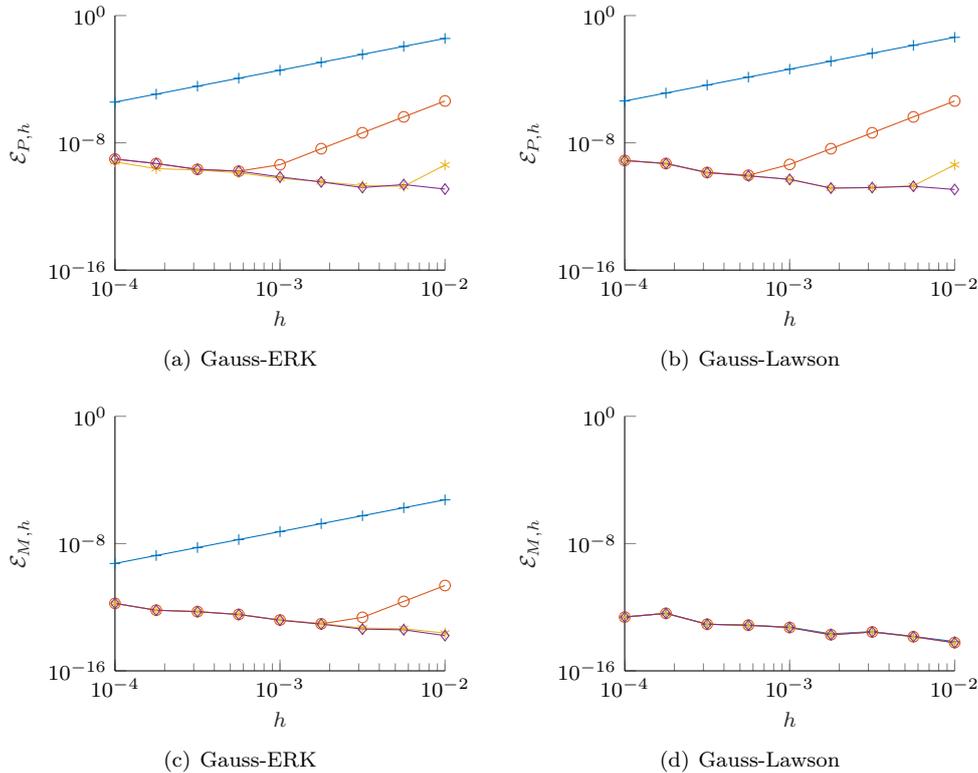
\begin{figure}[!h]
  \centering
  \subfigure[Gauss-ERK]{{\footnotesize
%
%
\definecolor{mycolor1}{rgb}{0.00000,0.44700,0.74100}%
\definecolor{mycolor2}{rgb}{0.85000,0.32500,0.09800}%
\definecolor{mycolor3}{rgb}{0.92900,0.69400,0.12500}%
\definecolor{mycolor4}{rgb}{0.49400,0.18400,0.55600}%
\begin{tikzpicture}

\begin{axis}[%
width=4.342cm,
height=3.375cm,
at={(0cm,0cm)},
scale only axis,
xmode=log,
xmin=0.0001,
xmax=0.01,
xminorticks=true,
xlabel style={font=\color{white!15!black}},
xlabel={$h$},
ymode=log,
ymin=1e-16,
ymax=1,
yminorticks=true,
ylabel style={font=\color{white!15!black}},
ylabel={$\mathcal{E}_{P,h}$},
axis background/.style={fill=white},
axis x line*=bottom,
axis y line*=left
]
\addplot [color=mycolor1, mark=+, mark options={solid, mycolor1}, forget plot]
  table[row sep=crcr]{%
0.01	0.0363768313905969\\
0.00562341325190349	0.0115088356856232\\
0.00316227766016838	0.00364050104643232\\
0.00177827941003892	0.00115157915869679\\
0.001	0.000364122020266808\\
0.000562341325190349	0.00011513986315633\\
0.000316227766016838	3.64113218004101e-05\\
0.000177827941003892	1.15145464757506e-05\\
0.0001	3.64101010398779e-06\\
};
\addplot [color=mycolor2, mark=o, mark options={solid, mycolor2}, forget plot]
  table[row sep=crcr]{%
0.01	4.27385433215264e-06\\
0.00562341325190349	4.27560670178446e-07\\
0.00316227766016838	4.275941684778e-08\\
0.00177827941003892	4.26980447226232e-09\\
0.001	4.21442279841734e-10\\
0.000562341325190349	1.65574538687834e-10\\
0.000316227766016838	2.14658395306922e-10\\
0.000177827941003892	5.00100198069208e-10\\
0.0001	9.6332200328883e-10\\
};
\addplot [color=mycolor3, mark=asterisk, mark options={solid, mycolor3}, forget plot]
  table[row sep=crcr]{%
0.01	4.05586848713713e-10\\
0.00562341325190349	1.85439842164428e-11\\
0.00316227766016838	2.05371130425179e-11\\
0.00177827941003892	3.59386891702768e-11\\
0.001	5.9440165441334e-11\\
0.000562341325190349	1.30646276873558e-10\\
0.000316227766016838	1.95208689970589e-10\\
0.000177827941003892	2.3933597632888e-10\\
0.0001	6.30537361581784e-10\\
};
\addplot [color=mycolor4, mark=diamond, mark options={solid, mycolor4}, forget plot]
  table[row sep=crcr]{%
0.01	1.24162081251038e-11\\
0.00562341325190349	2.38264459300912e-11\\
0.00316227766016838	1.55483556612053e-11\\
0.00177827941003892	3.4767746058101e-11\\
0.001	7.18904233592851e-11\\
0.000562341325190349	1.66856091493615e-10\\
0.000316227766016838	2.14336118768207e-10\\
0.000177827941003892	4.99143275969805e-10\\
0.0001	9.63963095541815e-10\\
};
\end{axis}
\end{tikzpicture}
 \goodgap  \subfigure[Gauss-Lawson]{{\footnotesize
%
%
\definecolor{mycolor1}{rgb}{0.00000,0.44700,0.74100}%
\definecolor{mycolor2}{rgb}{0.85000,0.32500,0.09800}%
\definecolor{mycolor3}{rgb}{0.92900,0.69400,0.12500}%
\definecolor{mycolor4}{rgb}{0.49400,0.18400,0.55600}%
\begin{tikzpicture}

\begin{axis}[%
width=4.342cm,
height=3.375cm,
at={(0cm,0cm)},
scale only axis,
xmode=log,
xmin=0.0001,
xmax=0.01,
xminorticks=true,
xlabel style={font=\color{white!15!black}},
xlabel={$h$},
ymode=log,
ymin=1e-16,
ymax=1,
yminorticks=true,
ylabel style={font=\color{white!15!black}},
ylabel={$\mathcal{E}_{P,h}$},
axis background/.style={fill=white},
axis x line*=bottom,
axis y line*=left
]
\addplot [color=mycolor1, mark=+, mark options={solid, mycolor1}, forget plot]
  table[row sep=crcr]{%
0.01	0.0426963344591637\\
0.00562341325190349	0.0135016622037283\\
0.00316227766016838	0.00427018292533746\\
0.00177827941003892	0.00135067591680631\\
0.001	0.00042707185386922\\
0.000562341325190349	0.000135045194223639\\
0.000316227766016838	4.27060692008342e-05\\
0.000177827941003892	1.35053366838422e-05\\
0.0001	4.27073322142018e-06\\
};
\addplot [color=mycolor2, mark=o, mark options={solid, mycolor2}, forget plot]
  table[row sep=crcr]{%
0.01	4.26214890826682e-06\\
0.00562341325190349	4.26390476573141e-07\\
0.00316227766016838	4.26484558118234e-08\\
0.00177827941003892	4.26706112350454e-09\\
0.001	4.37145113039359e-10\\
0.000562341325190349	9.1156521095233e-11\\
0.000316227766016838	1.35721293143284e-10\\
0.000177827941003892	4.99950797155802e-10\\
0.0001	7.78439465599962e-10\\
};
\addplot [color=mycolor3, mark=asterisk, mark options={solid, mycolor3}, forget plot]
  table[row sep=crcr]{%
0.01	4.17669980269818e-10\\
0.00562341325190349	1.99913834743216e-11\\
0.00316227766016838	1.55423416987315e-11\\
0.00177827941003892	1.42648987942789e-11\\
0.001	5.04332279918596e-11\\
0.000562341325190349	8.45895708572392e-11\\
0.000316227766016838	1.35430653812331e-10\\
0.000177827941003892	4.9905924685886e-10\\
0.0001	7.77223234546121e-10\\
};
\addplot [color=mycolor4, mark=diamond, mark options={solid, mycolor4}, forget plot]
  table[row sep=crcr]{%
0.01	1.1677475886293e-11\\
0.00562341325190349	1.85220550955523e-11\\
0.00316227766016838	1.53805080590768e-11\\
0.00177827941003892	1.43004928549779e-11\\
0.001	5.0493544563122e-11\\
0.000562341325190349	8.45332778426912e-11\\
0.000316227766016838	1.35366197083678e-10\\
0.000177827941003892	5.00358669839987e-10\\
0.0001	7.78574081445516e-10\\
};
\end{axis}
\end{tikzpicture}
  \subfigure[Gauss-ERK]{{\footnotesize
%
%
\definecolor{mycolor1}{rgb}{0.00000,0.44700,0.74100}%
\definecolor{mycolor2}{rgb}{0.85000,0.32500,0.09800}%
\definecolor{mycolor3}{rgb}{0.92900,0.69400,0.12500}%
\definecolor{mycolor4}{rgb}{0.49400,0.18400,0.55600}%
\begin{tikzpicture}

\begin{axis}[%
width=4.342cm,
height=3.375cm,
at={(0cm,0cm)},
scale only axis,
xmode=log,
xmin=0.0001,
xmax=0.01,
xminorticks=true,
xlabel style={font=\color{white!15!black}},
xlabel={$h$},
ymode=log,
ymin=1e-16,
ymax=1,
yminorticks=true,
ylabel style={font=\color{white!15!black}},
ylabel={$\mathcal{E}_{M,h}$},
axis background/.style={fill=white},
axis x line*=bottom,
axis y line*=left
]
\addplot [color=mycolor1, mark=+, mark options={solid, mycolor1}, forget plot]
  table[row sep=crcr]{%
0.01	5.63415950150087e-06\\
0.00562341325190349	1.78219970817888e-06\\
0.00316227766016838	5.63631185093963e-07\\
0.00177827941003892	1.7824044952483e-07\\
0.001	5.6365039001474e-08\\
0.000562341325190349	1.78241961005433e-08\\
0.000316227766016838	5.63644792527394e-09\\
0.000177827941003892	1.78232637160507e-09\\
0.0001	5.63381933249465e-10\\
};
\addplot [color=mycolor2, mark=o, mark options={solid, mycolor2}, forget plot]
  table[row sep=crcr]{%
0.01	2.31283300541668e-11\\
0.00562341325190349	2.30949359535225e-12\\
0.00316227766016838	2.29385667520929e-13\\
0.00177827941003892	9.03760977754324e-14\\
0.001	1.57031782791624e-13\\
0.000562341325190349	3.51830703216676e-13\\
0.000316227766016838	5.28209861778407e-13\\
0.000177827941003892	6.50787413746552e-13\\
0.0001	1.74007117285807e-12\\
};
\addplot [color=mycolor3, mark=asterisk, mark options={solid, mycolor3}, forget plot]
  table[row sep=crcr]{%
0.01	2.34553802144451e-14\\
0.00562341325190349	4.47905000705222e-14\\
0.00316227766016838	4.889850451486e-14\\
0.00177827941003892	9.38215208577802e-14\\
0.001	1.5994714078438e-13\\
0.000562341325190349	3.53818447302646e-13\\
0.000316227766016838	5.29402508229989e-13\\
0.000177827941003892	6.47607023209e-13\\
0.0001	1.73742084741011e-12\\
};
\addplot [color=mycolor4, mark=diamond, mark options={solid, mycolor4}, forget plot]
  table[row sep=crcr]{%
0.01	1.64320177773513e-14\\
0.00562341325190349	3.78996539058265e-14\\
0.00316227766016838	4.20076583501643e-14\\
0.00177827941003892	8.60030607862986e-14\\
0.001	1.519961644405e-13\\
0.000562341325190349	3.46662568593154e-13\\
0.000316227766016838	5.22909210882487e-13\\
0.000177827941003892	6.42703921130274e-13\\
0.0001	1.73848097758929e-12\\
};
\end{axis}
\end{tikzpicture}
 \goodgap  \subfigure[Gauss-Lawson]{{\footnotesize
%
%
\definecolor{mycolor1}{rgb}{0.00000,0.44700,0.74100}%
\definecolor{mycolor2}{rgb}{0.85000,0.32500,0.09800}%
\definecolor{mycolor3}{rgb}{0.92900,0.69400,0.12500}%
\definecolor{mycolor4}{rgb}{0.49400,0.18400,0.55600}%
\begin{tikzpicture}

\begin{axis}[%
width=4.342cm,
height=3.375cm,
at={(0cm,0cm)},
scale only axis,
xmode=log,
xmin=0.0001,
xmax=0.01,
xminorticks=true,
xlabel style={font=\color{white!15!black}},
xlabel={$h$},
ymode=log,
ymin=1e-16,
ymax=1,
yminorticks=true,
ylabel style={font=\color{white!15!black}},
ylabel={$\mathcal{E}_{M,h}$},
axis background/.style={fill=white},
axis x line*=bottom,
axis y line*=left
]
\addplot [color=mycolor1, mark=+, mark options={solid, mycolor1}, forget plot]
  table[row sep=crcr]{%
0.01	6.89084616469572e-15\\
0.00562341325190349	1.45767899637794e-14\\
0.00316227766016838	2.92860961999568e-14\\
0.00177827941003892	2.10700873112812e-14\\
0.001	5.57893506795558e-14\\
0.000562341325190349	7.3944079998081e-14\\
0.000316227766016838	8.48104143347166e-14\\
0.000177827941003892	4.10800444433784e-13\\
0.0001	2.4051703440236e-13\\
};
\addplot [color=mycolor2, mark=o, mark options={solid, mycolor2}, forget plot]
  table[row sep=crcr]{%
0.01	5.96323225790976e-15\\
0.00562341325190349	1.39142086017894e-14\\
0.00316227766016838	2.76959009311809e-14\\
0.00177827941003892	1.86847944081172e-14\\
0.001	5.31390252315959e-14\\
0.000562341325190349	7.3811563725683e-14\\
0.000316227766016838	8.49429306071146e-14\\
0.000177827941003892	4.10667928161386e-13\\
0.0001	2.4051703440236e-13\\
};
\addplot [color=mycolor3, mark=asterisk, mark options={solid, mycolor3}, forget plot]
  table[row sep=crcr]{%
0.01	5.69819971311377e-15\\
0.00562341325190349	1.39142086017894e-14\\
0.00316227766016838	2.79609334759769e-14\\
0.00177827941003892	1.84197618633213e-14\\
0.001	5.32715415039939e-14\\
0.000562341325190349	7.4076596270479e-14\\
0.000316227766016838	8.52079631519106e-14\\
0.000177827941003892	4.10800444433784e-13\\
0.0001	2.40782066947156e-13\\
};
\addplot [color=mycolor4, mark=diamond, mark options={solid, mycolor4}, forget plot]
  table[row sep=crcr]{%
0.01	5.83071598551177e-15\\
0.00562341325190349	1.43117574189834e-14\\
0.00316227766016838	2.76959009311809e-14\\
0.00177827941003892	1.85522781357193e-14\\
0.001	5.28739926867999e-14\\
0.000562341325190349	7.3811563725683e-14\\
0.000316227766016838	8.52079631519106e-14\\
0.000177827941003892	4.10932960706182e-13\\
0.0001	2.40782066947156e-13\\
};
\end{axis}
\end{tikzpicture}

  \caption{{\color{black}Evolution of $\mathcal{E}_{P,h}$ (figures (a) and (b)) and $\mathcal{E}_{M,h}$ (figures (c) and (d)) errors with respect to the time step for problem \eqref{eq:BeCu} (see Figure \ref{fig:legends} for legends)}}
  \label{fig:BeCuEPh}
\end{figure}

\subsection{Two dimensional simulations}
\label{Comp2d}

The first 2D example concerns the equation
\begin{equation}
  \label{eq:NLS2D}
  \partial_t \psi= i \Delta \psi + i |\psi|^2 \psi, \quad (t,\mathbf{x}) \in [0,T]\times \R^2,
\end{equation}
which is a nonlinear Schr\"odinger equation without confinement or rotation.
We look for a solution of
the form
$$
\psi(t,\mathbf{x})=e^{it} \Theta(\mathbf{x}),
$$
where $\Theta$ solves
$$
\Delta \Theta +  |\Theta|^2 \Theta = \Theta,
$$
with $\lim_{\|\mathbf{x}\|\to \infty} \Theta(\mathbf{x})=0$. Since we do not have access to an exact solution $\Theta$ to this problem, we generate it through a classical shooting point method \cite{dimenza_2009}. Since the decay of this kind of solutions is slow, we consider $(x,y) \in [-38,38]^2$ with $P=9$.

As for one dimensional simulations, we evaluate several errors for the different methods. We generalize the error functions $\mathcal{E}_{P,h}$, $\mathcal{E}_{M,h}$ and $\mathcal{E}_{E,h}$ defined in \eqref{eq:errphase}, \eqref{eq:errmass} and \eqref{eq:errener} to two dimensional cases. We only present here simulations for ERK and Lawson methods with $s=1$, $2$ and $3$ since we have seen on one dimensional experiments that we do not gain much more precision for $s>3$. \textcolor{black}{We also restrict ourselves to the presentation of the $\mathcal{E}_{P,h}$ error
displayed in Figure \ref{fig:EPh_2D} since the $\mathcal{E}_{M,h}$ and $\mathcal{E}_{E,h}$ errors lead to similar results to the ones presented in Figures \ref{fig:EMh} and \ref{fig:EEh}.}
\begin{figure}[!h]
  \centering
  \subfigure[Gauss-ERK]{{\footnotesize 
%
%
\definecolor{mycolor1}{rgb}{0.00000,0.44700,0.74100}%
\definecolor{mycolor2}{rgb}{0.85000,0.32500,0.09800}%
\definecolor{mycolor3}{rgb}{0.92900,0.69400,0.12500}%
\begin{tikzpicture}[scale=0.9]

\begin{axis}[%
width=4.754601cm,
height=3.75cm,
at={(0cm,0cm)},
scale only axis,
xmode=log,
xmin=0.001,
xmax=0.1,
xminorticks=true,
xlabel={$h$},
ymode=log,
ymin=1e-10,
ymax=10,
yminorticks=true,
ylabel={$\mathcal{E}_{P,h}$},
axis x line*=bottom,
axis y line*=left
]
\addplot [color=mycolor1,solid,mark=+,mark options={solid},forget plot]
  table[row sep=crcr]{%
0.1	2.91279796242156\\
0.0562341325190349	1.26414200637174\\
0.0316227766016838	0.452550054689343\\
0.0177827941003892	0.149734624581597\\
0.01	0.048133307293209\\
0.00562341325190349	0.0152865528238804\\
0.00316227766016838	0.00484217891203347\\
0.00177827941003892	0.0015327398905632\\
0.001	0.000484602844215684\\
};
\addplot [color=mycolor2,solid,mark=o,mark options={solid},forget plot]
  table[row sep=crcr]{%
0.1	0.00269595963579923\\
0.0562341325190349	0.000260323124956633\\
0.0316227766016838	2.56489986531622e-05\\
0.0177827941003892	2.52517897316708e-06\\
0.01	2.1837731481378e-07\\
0.00562341325190349	1.25622272231941e-08\\
0.00316227766016838	3.56397706521246e-08\\
0.00177827941003892	3.79782460402977e-08\\
0.001	3.8230072261986e-08\\
};
\addplot [color=mycolor3,solid,mark=asterisk,mark options={solid},forget plot]
  table[row sep=crcr]{%
0.1	6.37883900138203e-07\\
0.0562341325190349	1.73826162408889e-08\\
0.0316227766016838	3.74670114296653e-08\\
0.0177827941003892	3.81130891248458e-08\\
0.01	3.81992252630183e-08\\
0.00562341325190349	3.81889324925336e-08\\
0.00316227766016838	3.82029661401289e-08\\
0.00177827941003892	3.82343747155712e-08\\
0.001	3.82551777648394e-08\\
};
\end{axis}
\end{tikzpicture}
\goodgap  \subfigure[Gauss-Lawson]{{\footnotesize
%
%
\definecolor{mycolor1}{rgb}{0.00000,0.44700,0.74100}%
\definecolor{mycolor2}{rgb}{0.85000,0.32500,0.09800}%
\definecolor{mycolor3}{rgb}{0.92900,0.69400,0.12500}%
\begin{tikzpicture}[scale=0.9]

\begin{axis}[%
width=4.754601cm,
height=3.75cm,
at={(0cm,0cm)},
scale only axis,
xmode=log,
xmin=0.001,
xmax=0.1,
xminorticks=true,
xlabel={$h$},
ymode=log,
ymin=1e-10,
ymax=10,
yminorticks=true,
ylabel={$\mathcal{E}_{P,h}$},
axis x line*=bottom,
axis y line*=left
]
\addplot [color=mycolor1,solid,mark=+,mark options={solid},forget plot]
  table[row sep=crcr]{%
0.1	6.24493087394186\\
0.0562341325190349	4.9114905948761\\
0.0316227766016838	2.81092852777815\\
0.0177827941003892	1.20150753499128\\
0.01	0.429381161853651\\
0.00562341325190349	0.141549560501359\\
0.00316227766016838	0.0453896645572215\\
0.00177827941003892	0.0144242905500396\\
0.001	0.00456637927179504\\
};
\addplot [color=mycolor2,solid,mark=o,mark options={solid},forget plot]
  table[row sep=crcr]{%
0.1	1.44472088132077\\
0.0562341325190349	0.21140875975344\\
0.0316227766016838	0.022751417782964\\
0.0177827941003892	0.00230791264230029\\
0.01	0.000231522566685134\\
0.00562341325190349	2.30927024084553e-05\\
0.00316227766016838	2.27341885673319e-06\\
0.00177827941003892	1.92925122366831e-07\\
0.001	1.51341715145256e-08\\
};
\addplot [color=mycolor3,solid,mark=asterisk,mark options={solid},forget plot]
  table[row sep=crcr]{%
0.1	0.126678735312914\\
0.0562341325190349	0.00585195012566455\\
0.0316227766016838	0.000211347045003773\\
0.0177827941003892	6.96372036257736e-06\\
0.01	1.864660963534e-07\\
0.00562341325190349	3.11646053879523e-08\\
0.00316227766016838	3.8024993048283e-08\\
0.00177827941003892	3.8228825187222e-08\\
0.001	3.82218264964175e-08\\
};
\end{axis}
\end{tikzpicture}
\subfigure[Splitting]{{\footnotesize
%
%
\definecolor{mycolor1}{rgb}{0.00000,0.44700,0.74100}%
\definecolor{mycolor2}{rgb}{0.85000,0.32500,0.09800}%
\definecolor{mycolor3}{rgb}{0.92900,0.69400,0.12500}%
\definecolor{mycolor4}{rgb}{0.49400,0.18400,0.55600}%
\begin{tikzpicture}[scale=0.9]

\begin{axis}[%
width=4.754601cm,
height=3.75cm,
at={(0cm,0cm)},
scale only axis,
xmode=log,
xmin=0.001,
xmax=0.1,
xminorticks=true,
xlabel={$h$},
ymode=log,
ymin=1e-10,
ymax=10,
yminorticks=true,
ylabel={$\mathcal{E}_{P,h}$},
axis x line*=bottom,
axis y line*=left
]
\addplot [color=mycolor1,solid,mark=+,mark options={solid},forget plot]
  table[row sep=crcr]{%
0.1	3.31097002387137\\
0.0562341325190349	1.31584756420413\\
0.0316227766016838	2.34261741693371\\
0.0177827941003892	2.07329811886577\\
0.01	1.35153447182348\\
0.00562341325190349	0.799894098624813\\
0.00316227766016838	0.458986978472355\\
0.00177827941003892	0.260459359160299\\
0.001	0.147041280656367\\
};
\addplot [color=mycolor2,solid,mark=o,mark options={solid},forget plot]
  table[row sep=crcr]{%
0.1	5.12505981224286\\
0.0562341325190349	3.05410818481987\\
0.0316227766016838	1.33851688493136\\
0.0177827941003892	0.48484447693227\\
0.01	0.161088069040281\\
0.00562341325190349	0.0517241977532673\\
0.00316227766016838	0.0164422111187166\\
0.00177827941003892	0.00521055356188327\\
0.001	0.00164804955758433\\
};
\addplot [color=mycolor3,solid,mark=diamond,mark options={solid},forget plot]
  table[row sep=crcr]{%
0.1	6.47963984234833\\
0.0562341325190349	1.91932419631739\\
0.0316227766016838	0.233892799842726\\
0.0177827941003892	0.0262722288561889\\
0.01	0.00274510795106126\\
0.00562341325190349	0.000278141549846207\\
0.00316227766016838	2.79716668965111e-05\\
0.00177827941003892	2.83684752745736e-06\\
0.001	3.18271598632587e-07\\
};
\addplot [color=mycolor4,solid,mark=square,mark options={solid},forget plot]
  table[row sep=crcr]{%
0.1	0.00460837248789056\\
0.0562341325190349	0.00019601941999269\\
0.0316227766016838	5.91979935836758e-06\\
0.0177827941003892	1.09910674872624e-07\\
0.01	3.37328654772121e-08\\
0.00562341325190349	3.81473451967411e-08\\
0.00316227766016838	3.83781389013613e-08\\
0.00177827941003892	3.85192537434476e-08\\
0.001	3.88023671477655e-08\\
};
\end{axis}
\end{tikzpicture}
  \caption{Evolution of $\mathcal{E}_{P,h}$ error with respect to the time step {\color{black}for problem \eqref{eq:NLS2D}} {\color{black}(see Figure \ref{fig:legends}) for legends}}
  \label{fig:EPh_2D}
\end{figure}
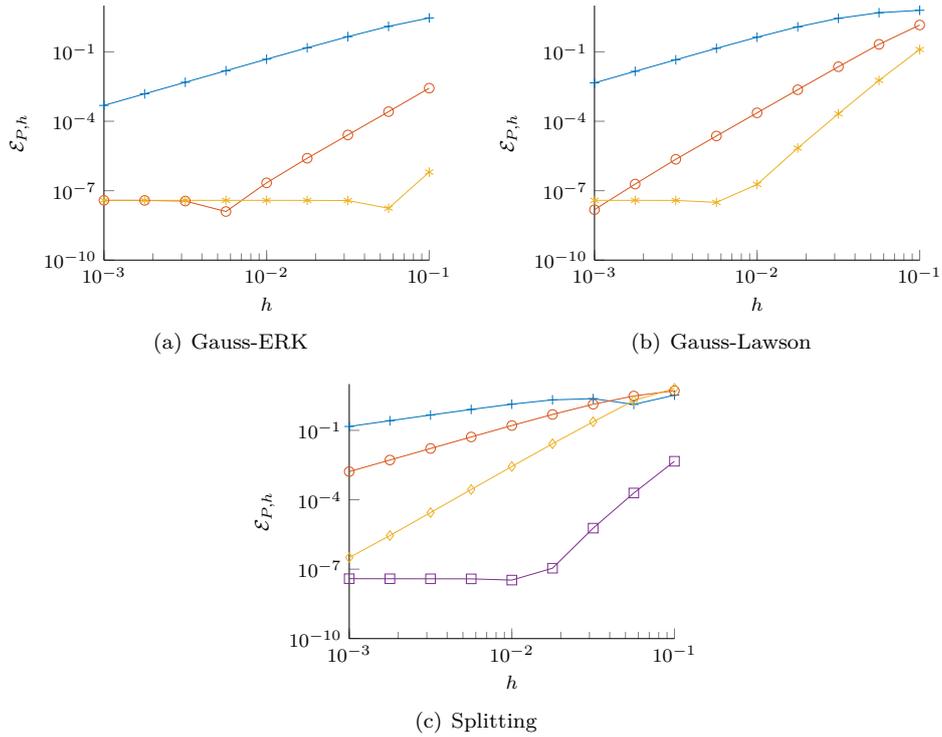

\textcolor{black}{ We also propose in this subsection to illustrate the efficiency and versatility of the methods
on the simulation of a soliton for the two-dimensional Schr\"odinger equation
and the simulation of the evolution of a rotating Bose--Einstein condensate.
Thereby, }our second 2D numerical example corresponds to a rotating BEC modelled
by equation \eqref{GPEeq}.
We reproduce here with the Gauss-ERK method of order 6 the simulations
realized in \cite{BMTZ2013} for \eqref{GPEeq}, see Figure \ref{fig:bec}.
The parameters are $\beta=1000$, $\Omega=0.9$, and the potential is
\eqref{potharmonic} with $\gamma_x=1.05$ and $\gamma_y=0.95$.
The computational domain is $(-16,16)^2$ with $2^9$ Fourier modes in each
direction.
The time step is $h=10^{-3}$ with a final time $T=7$.
The initial datum is the ground state of the stationary equation and
was generated using Matlab toolbox
GPELab\footnote{\url{http://gpelab.math.cnrs.fr}}.
We recover the same behaviour and we get good conservation properties.

\begin{figure}[!h]
  \centering
  \begin{tabular}{ccc}
    \includegraphics[width=.28\textwidth]{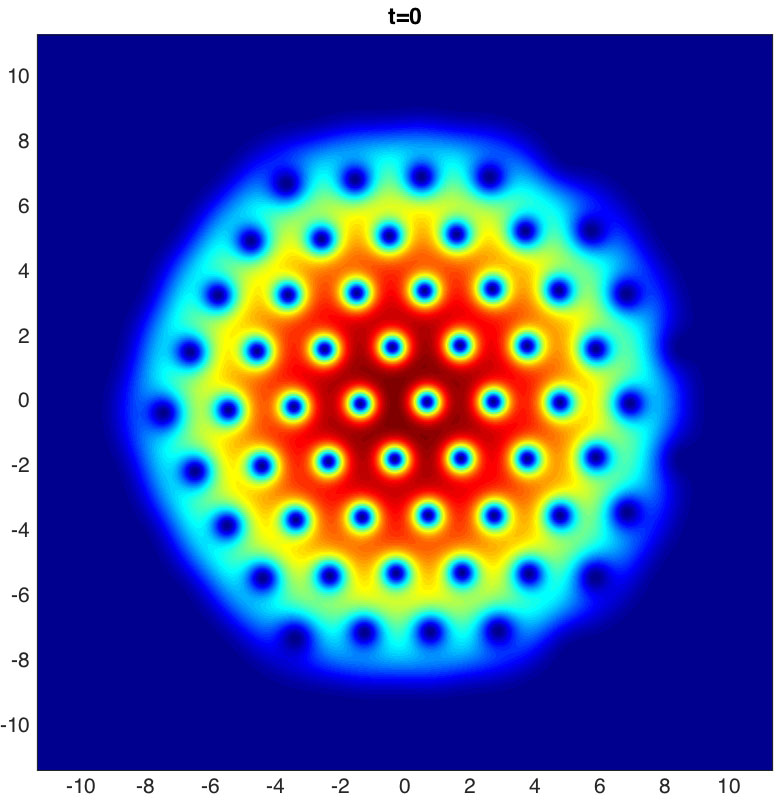} & \includegraphics[width=.28\textwidth]{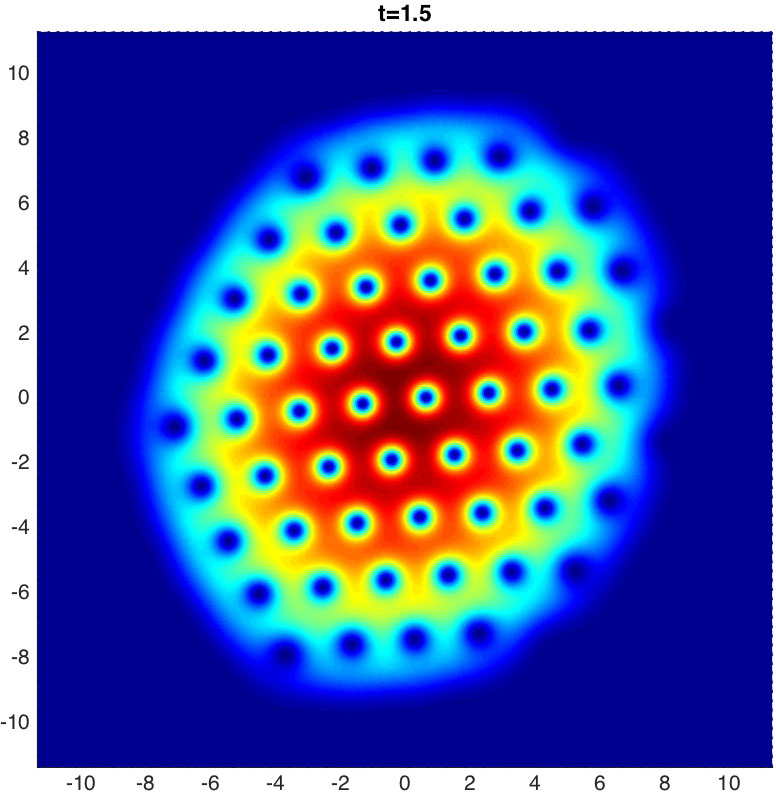} & \includegraphics[width=.28\textwidth]{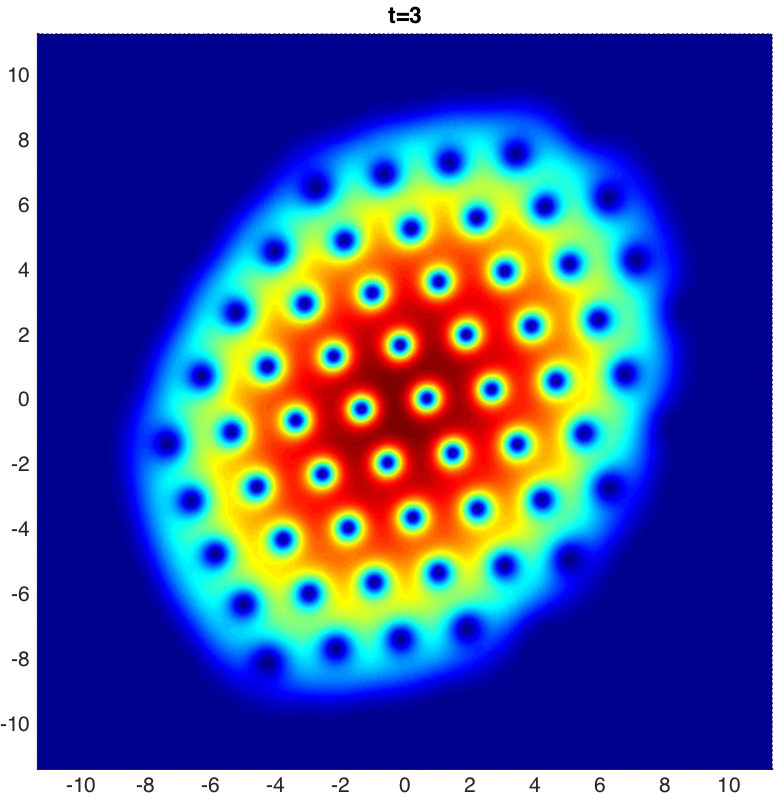} \\
    \includegraphics[width=.28\textwidth]{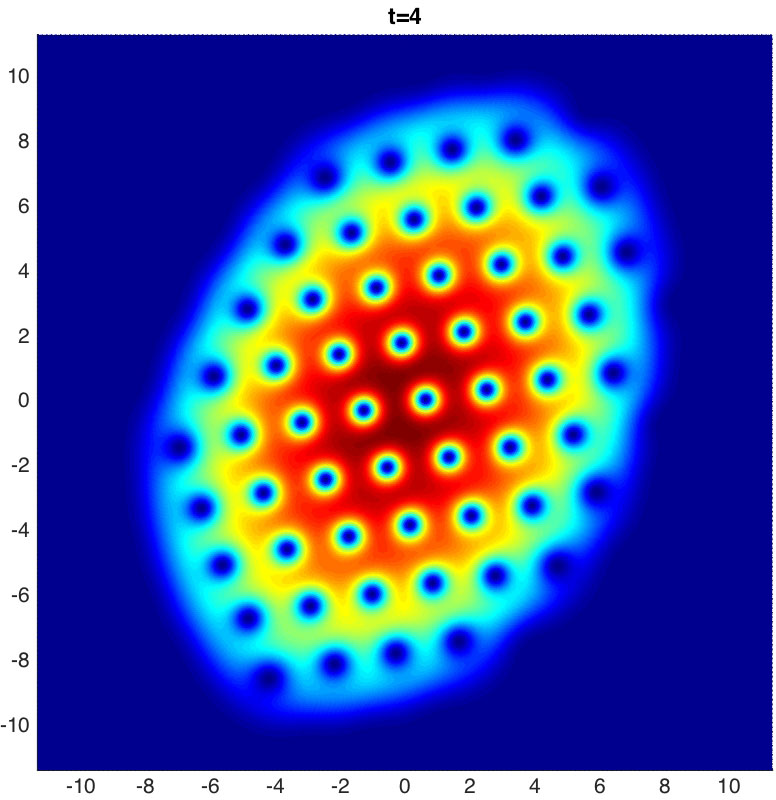} & \includegraphics[width=.28\textwidth]{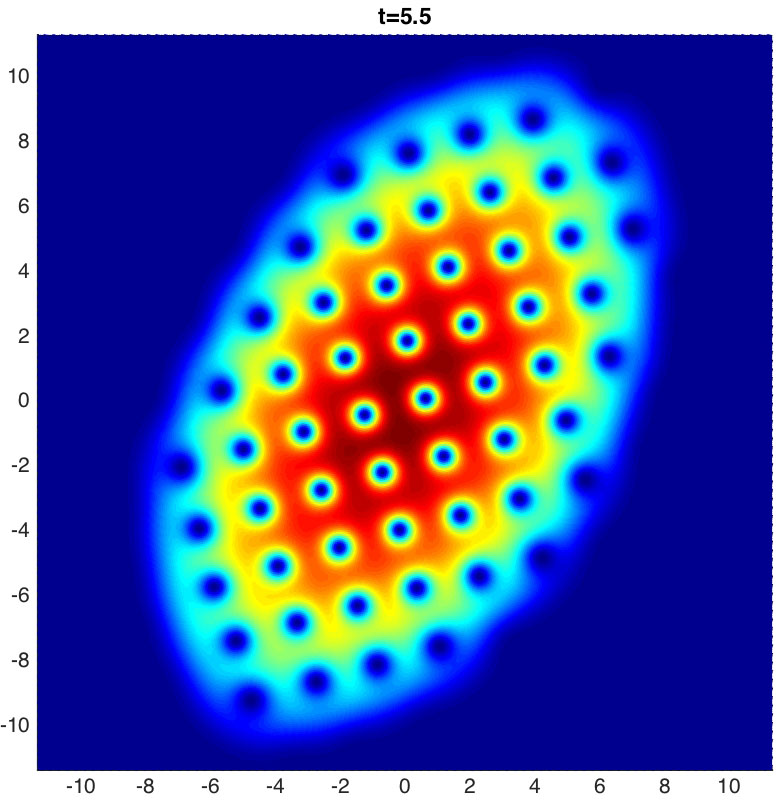} & \includegraphics[width=.28\textwidth]{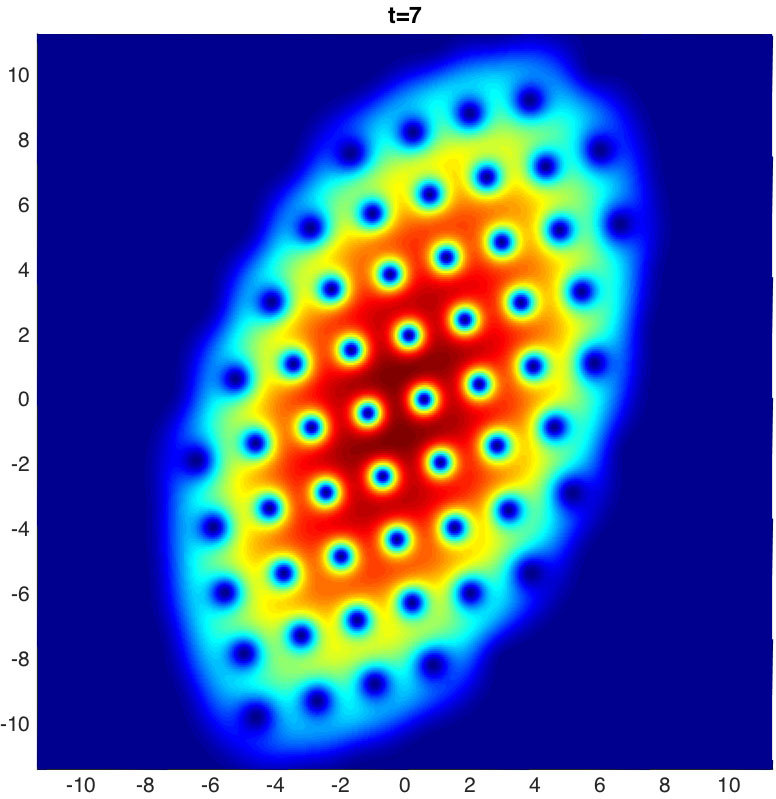} \\
  \end{tabular}
  \caption{Contour plots of the density function $|\varphi(t,\mathbf{x})|^2$ in a rotating BEC.}
  \label{fig:bec}
\end{figure}

\section*{Conclusion}

We presented ERK and Lawson methods that allow {\color{black}us} to compute numerical
solutions of NLS equations modelling a rotating BEC in a fairly general
setting.
This procedure allows {\color{black}us} to derive neatly high order methods
(in contrast to finding coefficients for high order splitting methods with
small error constants, for example).
We proved finite-time convergence in Sobolev norms for these methods
in a simplified framework.
We compared the numerical results provided by these methods to that obtained
via classical splitting methods in several configurations: 1D problems,
2D problems without confinement or rotation, 2D realistic problems with
rotation and confinement.
It is {\color{black}noteworthy} that all the methods presented in this paper
allow to deal with non-autonomous problems (no matter whether the
lack of autonomy comes from the physical situation or a change of unknown).

When it comes to finite time accuracy, Gauss-ERK methods outperformed
Lawson methods and splitting methods in all cases, since they have
very low error constants numerically.
Moreover, even if they do not preserve mass exactly (up to roundoff errors),
their relative error on the mass is comparable to that of Gauss-Lawson
and splitting methods (which preserve the mass up to roundoff errors) for
reasonably small time steps (see Figure \ref{fig:EMh} for time steps of order
$10^{-4}$). This can be explained by the accumulation of roundoff errors when
$h$ gets smaller for the methods preserving mass exactly.

When it comes to computational times, Gauss-ERK methods, though implicit,
also outperformed Lawson and splitting methods (see Figure \ref{fig:CPU})
on the $1D$ example. To be completely fair, one must say that
ERK methods {(and, to a lesser extent, Lawson methods)} require the precomputation of some coefficients.
This computation only needs to be carried out once, before 
one starts the time stepping procedure, and can be parallelized
if necessary.

With respect to average or long time behaviour, in contrast to splitting
methods, ERK and Lawson methods do not show resonances phenomena \cite{DuFa2007,FaJe2015}.

\section*{Acknowledgements}
This work was partially supported by the French ANR grant ANR-12-MONU-0007-02 BECASIM (''Mod\`eles Num\'eriques'' call), by the Inria teams MEPHYSTO and RAPSODI and by the Labex CEMPI  (ANR-11-LABX-0007-01). 
We would like to thank Romain Duboscq for his valuable help to build the initial datum for the second two dimensional test case.
{\color{black} The authors also thank the referees for their valuable
suggestions and comments.}



\end{document}